\numberwithin{equation}{section}
\newtheorem{theorem}{Theorem}
\newtheorem{proposition}{Proposition}
\newtheorem{lemma}{Lemma}
\newtheorem{corollary}{Corollary}
\newtheorem{remark}{Remark}
\newcommand{\taut}{\text{\texttau}}
\begin{document}
\title{Discrete Carleman estimates and application to controllability for a fully-discrete parabolic operator with dynamic boundary conditions  \footnote{The first author is partially supported by FONDECYT Grant 1221892. The second author has been supported by FONDECYT Grant 3200830. The third author is funded by POSTDOC-DICYT Grant 042233ZA-Postdoc, Vicerrector\'ia de Investigaci\'on, Desarrollo e Innovaci\'on. The fourth author is partially supported by Anid PAI Convocatoria Nacional Subvenci\'on a la Instalaci\'on en la Academia Convocatoria a\~no 2019 PAI77190106.}}

\date{\today}
\author{
	Rodrigo Lecaros\footnotemark[2] \and
	Roberto Morales\footnotemark[2] \and
	Ariel P\'erez\footnotemark[3] \and Sebasti\'an Zamorano\footnotemark[3]}
\maketitle 

\footnotetext[1]{Departamento de Matem\'atica, Universidad T\'ecnica Federico Santa Mar\'ia, Casilla 110-V, Valpara\'iso, Chile}
\footnotetext[2]{Departamento de Matem\'atica y Ciencia de la Computaci\'on, Universidad de Santiago de Chile, Las Sophoras 175, Estaci\'on Central,  Santiago, Chile}
\footnotetext{rodrigo.lecaros@usm.cl,\;\;roberto.moralesp@usm.cl,\;\;ariel.perez.c@usach.cl,\;\;sebastian.zamorano@usach.cl.}
\begin{abstract}
    We consider a fully-discrete approximations of 1-D heat equation with dynamic boundary conditions for which we provide a controllability result. The proof of this result is based on a relaxed observability inequality for the corresponding adjoint system. This is done by using a suitable Carleman estimate for such models where the discrete parameters $h$ and $\Delta t$ are connected to the one of the large Carleman parameters.
\end{abstract}
\textbf{Keyword:}
Fully-discrete parabolic equations, Discrete Carleman estimates, Observability, Null controllability.\\
\textbf{MSC}[2020] 65M06, 93B05, 93B07.

\section{Introduction and main results}

\subsection{Motivation}
Let $T>0$, $\Omega=(0,1)$ and $\omega$ be a nonempty subset of $\Omega$. The main purpose of this work is to study controllability properties for fully-discrete approximations of the following controlled heat equation with dynamic boundary conditions
\begin{align}
\label{eq:original:model}
\begin{cases}
\partial_{t} y-\partial_{x}^{2} y+by=\mathbbm{1}_\omega v,&  (x,t)\in \Omega\times (0,T),\\
\dot{y}_{\Gamma_0} (t)-\partial_{x} y(0,t)+b_{\Gamma_0}(t)y_{\Gamma_0}(t)=0,& t\in (0,T),\\
\dot{y}_{\Gamma_1}(t)+\partial_{x} y(1,t)+b_{\Gamma_1}(t)y_{\Gamma_1}(t)=0,& t\in (0,T),\\
y_{\Gamma_0}(t)=y(0,t),\, y_{\Gamma_1}(t)=y(1,t),& t\in (0,T),\\
y(x,0)=y^0(x),\, y_{\Gamma_0}(0)=y_{\Gamma_0}^0,\, y_{\Gamma_1}(0)=y_{\Gamma_1}^0,&  x\in \Omega.
\end{cases}
\end{align}
Here $(y_{\Gamma_0},y, y_{\Gamma_1})=(y_{\Gamma_0}(t),y(x,t), y_{\Gamma_1}(t))$ is the state, $(y_{\Gamma_0}^0,y^0, y_{\Gamma_1}^0)$ is the initial condition, the control function $v$ is acting on an open subinterval $\omega\subset \Omega$ thorough $\mathbbm{1}_\omega (x)$, the characteristic function of $\omega $ in $\Omega$, and $b=b(x,t)$, $(b_{\Gamma_0},b_{\Gamma_1})=(b_{\Gamma_0}(t),b_{\Gamma_1}(t))$ stand for potential functions in $\Omega$ and at $x=0,$ $x=1$, respectively.  We observe that the initial conditions of \eqref{eq:original:model} may not be related a priori. However, if $y^0$ is regular enough, then $y^0(0)=y_{\Gamma_0}^0$ and $y^0(1)=y_{\Gamma_1}^0$.


Defining the Hilbert space $\mathbb{L}^2:=\mathbb{R}\times L^2(\Omega)\times \mathbb{R}$ and for $k\in \mathbb{N}$,
\begin{align}
    \mathbb{H}^k:=\{(u_{\Gamma_0},u,u_{\Gamma_1})\in \mathbb{L}^2\,;\, u\in H^k(\Omega),\, u(0)=u_{\Gamma_0},\,u(1)=u_{\Gamma_1}\},
\end{align}
in \cite{maniar2017null}, the authors showed that for any $(y_{\Gamma_0}^0,y^0,y_{\Gamma_1}^0)\in \mathbb{L}^2$ and $v\in L^2(\omega\times (0,T))$, system \eqref{eq:original:model} has a unique weak solution $(y_{\Gamma_0},y,y_{\Gamma_1})$ which belongs to $C^0([0,T];\mathbb{H}^1)$. Moreover, there exists a constant $C>0$ such that
\begin{align*}
    \max_{t\in [0,T]} \|(y_{\Gamma_0},y,y_{\Gamma_1})(t)\|_{\mathbb{H}^1}^2 \leq C\left( \|(y_{\Gamma_0}^0,y^0,y_{\Gamma_1}^0)\|_{\mathbb{L}^2}^2 + \|v\|_{L^2(\omega\times (0,T))}^2 \right).
\end{align*}

Recently, parabolic models with dynamic boundary condition such as \eqref{eq:original:model} have attracted the attention of the scientific community due to the wide range of applications. Let us mention some of them: population dynamics, heat transfer between a solid and a moving fluid when the diffusion process is at the interface, climate science, colloid chemistry, diffusion phenomena in thermodynamics, among others (see e.g. \cite{MR2185209, MR2793495, MR2434980,  MR3262633, MR602478, MR2763567}).  We refer the reader interested in a complete derivation of this type of boundary conditions and their physical interpretation (based on the first and second thermodynamical principles) to \cite{goldstein2006}.

The so-called Wentzell boundary conditions were introduced in 1959 when A. D. Wentzell \cite{MR121855} answered (at least partially) the following question: what are the most general boundary conditions which restrict the closure of an Elliptic operator (e.g. $L=d\Delta$) to the infinitesimal operator of a semigroup of positive contraction operators acting on $C(\overline{\Omega})$? More precisely, Wentzell found a large class of boundary conditions which involve differential operators on the boundary which satisfy this requirement. We point out that these boundary conditions are of the same order as the operator acting in $\Omega$ and they have the form
	\begin{align*}
		Lu +db\partial_\nu^L u=0,\text{ on }\partial \Omega,
	\end{align*} 
where $n$ denotes the outward normal at $\partial \Omega$, $b$ a positive constant and $\partial_n ^L u$ is the outward co-normal derivative of $u$ with respect to $L$. In 2002, A. Favini et al. \cite{MR1890879} treated parabolic equations with Wentzell boundary conditions in the $L^p$-context. This paper have been attracted of the scientific community since these boundary conditions govern a dynamic behaviour at the boundary and its solutions possess an analytical nature. We refer to \cite{MR3565950} and references therein for more recent contributions in this direction.

On the other hand, let us introduce the notion of controllability that we want to study in this work. System \eqref{eq:original:model} is said to be null controllable at time $T>0$ if for any given initial datum $(y_{\Gamma_0}^0,y^0,y_{\Gamma_1}^0)\in \mathbb{L}^2$, there exists a control $v\in L^2(\omega\times (0,T))$ such that the corresponding solution $(y_{\Gamma_0},y,y_{\Gamma_1})$ satisfies \begin{align*}
    (y_{\Gamma_{0}},y,y_{\Gamma_{1}})(T)=(0,0,0),\text{ in }\overline{\Omega}.
\end{align*}
We emphasize that the control $v$ in \eqref{eq:original:model} is acting on a subset of $\Omega$. This means that, the first equation in \eqref{eq:original:model} is being controlled directly while the second and third equations are controlled indirectly through the side conditions at $x=0$ and $x=1$. 

Following the duality between controllability and observability,  the null controllability of \eqref{eq:original:model} was obtained in \cite[Theorem 1.1]{maniar2017null} by proving, under a suitable Carleman estimates, an observability inequality for the corresponding adjoint system (see \cite[Proposition 4.1]{maniar2017null}). More precisely, let $(z_{\Gamma_0},z,z_{\Gamma_1})$ be the solution of
\begin{align}
\label{eq:adjoint:original:model:02}
    \begin{cases}
    -\partial_{t} z-\partial_{x}^{2} z+bz=0,& (x,t)\in \Omega\times (0,T),\\
    -\dot{z}_{\Gamma_0}(t)-\partial_{x} z(0,t) +b_{\Gamma_0}(t) z_{\Gamma_0}(t)=0,&t\in (0,T),\\
    -\dot{z}_{\Gamma_1}(t)+\partial_{x} z(1,t)+b_{\Gamma_1}(t) z_{\Gamma_1}(t)=0,&t\in (0,T),\\
    z_{\Gamma_0}(t)=z(0,t),\, z_{\Gamma_1}(t)=z(1,t),& t\in (0,T),\\
    z(x,T)=z^T(x),\, z_{\Gamma_0}(T)=z_{\Gamma_0}^T,\, z_{\Gamma_1}(T)=z_{\Gamma_1}^T,& x\in \Omega. 
    \end{cases}
\end{align}
Then, system \eqref{eq:original:model} is null controllable at any time $T>0$ if and only if there exists $C_{\text{obs}}>0$ such that the following observability inequality holds
\begin{align}
\label{continuous:observability:inequality}
    \|z(0)\|_{L^2(\Omega)}^2 + |z_{\Gamma_0}(0)|^2 + |z_{\Gamma_1}(0)|^2 \leq C_{\text{obs}}\int_0^T\int_\omega |z|^2 dxdt,
\end{align}
for all $(z_{\Gamma_0}^T,z^T,z_{\Gamma_1}^T)\in \mathbb{L}^2$. 

Let us mention another articles related to the controllability of parabolic equations with dynamical boundary conditions. In \cite{khoutaibi2020null} the authors studied the controllability properties of parabolic equations with drift terms using the same approach. The boundary null controllability has been studied in \cite{maniar2022boundary}. The local null controllability for the semilinear problem was addressed in \cite{khoutaibi2022null}. Additionally, in the context of inverse problem we refer to \cite{L:inverse}. In optimal control problem, we found the work \cite{MR3009728} where the authors studied the existence of minimizer by proving that the corresponding elliptic operator generates a strongly continuous semigroup of contractions and applying the concept of maximal parabolic regularity.

As already mentioned, this article is devoted  to study null controllability and observability properties for fully-discrete approximations of systems \eqref{eq:original:model} and its dual problem \eqref{eq:adjoint:original:model:02}, respectively. Namely, we will analyze the controllability of discrete models obtained after discretizing the system \eqref{eq:original:model} by suitable numerical methods and their possible convergence towards the controls of the continuous models under consideration when the mesh-size parameters tend to zero.  The first main advantage of the discrete approximation is that it yields approximate controls that control, at least partially, the approximated numerical dynamics. And the second one is that we can use numerical algorithms with high convergence capability to solve the control problem. It is a well-known fact that controlled discrete systems may differ considerably from its continuous counterpart. That is, the null controllability result contained in \cite{maniar2017null} does not necessarily remain valid in the discrete setting. In this sense, the study and analysis of controllability properties for fully-discrete system with dynamical boundary conditions is a challenge and interesting problem, which deserve to be investigate. As far as we know, this is the first work that analyse this kind of question for parabolic equations with dynamical boundary conditions.

The controllability properties of semi-(in space and/or time) or fully-discrete approximations of parabolic equations have been studied in \cite{boyer:victor:2020, BoyerRousseau2010, BoyerRousseau2014, CLTP-2022, Thuy, discreto:calor}. The current work sings up in this direction and its aim is two-fold. The first one is to provide a discrete Carleman estimate for a  fully-discrete approximation of the parabolic system \eqref{eq:adjoint:original:model:02}. On this concern, in recent years, the development of discrete or semi-discrete Carleman estimates have been used to obtain, for instance, controllability result of spatial discrete parabolic systems \cite{BoyerDeTeresa,BoyerRousseau2010,BoyerRousseau2011,BoyerRousseau2014,CLTP-2022,Thuy}, time discrete parabolic systems \cite{boyer:victor:2020} and the fully discrete case \cite{discreto:calor}. The second aim of the present paper is to illustrate the theoretical results in the study of distributive controllabity of the fully-discrete aproximation of the linear controlled system \eqref{eq:original:model}.  At the best of our knowledge, discrete Carleman estimates, together with the study of the controllability results to this kind of parabolic discrete systems with dynamical boundary conditions are also new.

\subsection{Discrete settings}\label{subsec:discrete setting}

In order to better describe the discrete problem to be studied, we introduce the discretization to be used. Let $M,N\in \mathbb{N}$ and $T>0$. We adopt the notation $\llbracket c,d \rrbracket=[c,d]\cap \mathbb{Z}$. Define $h=1/(M+1)$ and $\Delta t=T/N$. Let us also define the uniform meshes in space and time as follows:
\begin{align*}
    \mathcal{K}=\{x_i:=ih\,;\, i\in \llbracket 0,M+1 \rrbracket\},\quad
    \mathcal{N}=\{t_j:=j\Delta t\, ;\, j\in \llbracket 0,N-1 \rrbracket\}. 
\end{align*}

We point out that here and subsequently we shall use uniform meshes in both variables, i.e., meshes with constant discretization steps in each space and time. However, the introduction of more general non-uniform meshes is possible.


For a function $f:\overline{\Omega}\times (0,T)$, we denote the discrete approximation of $f$ at the point $(x_i,t_j)\in \mathcal{K} \times \mathcal{N}$ as 
\begin{align*}
    f_i^j =f(x_i,t_j),\quad \forall (i,j)\in \llbracket 0,M+1 \rrbracket  \times \llbracket 0,N \rrbracket.
\end{align*}

Then, the fully-discrete approximations of system \eqref{eq:original:model} can read as follows
\begin{align}
    \label{fully-discrete-heat-dbc}
    \begin{cases}
    \dfrac{y^{n+1}-y^{n}}{\Delta t}-\mathcal{A}_hy^{n+1}+\mathcal{C}^{n+1}y^{n+1}=\mathbbm{1}_\omega v^{n+1},&\text{ for }  n\in \llbracket 0,N-1 \rrbracket,\\
    y^{0}_{i}=g_{i},& \text{ for } i\in \llbracket 0,M+1 \rrbracket,
    \end{cases}
\end{align}
where $y^n=\left(\begin{array}{c} y_0^n\\ \vdots\\ y_{M+1}^n  \end{array}\right)$ (with $n\in \llbracket 0,N\rrbracket$) is the state of the system, $\mathcal{A}_h$ is a $(M+2)\times (M+2)$ matrix defined by
\begin{align}
    \mathcal{A}_h=\dfrac{1}{h^2}\left( 
    \begin{array}{cccccc}
    h & -h & 0 & \ldots & \ldots & 0\\
    1 & -2 & 1 & & & \vdots\\
    0 & \ddots & \ddots & \ddots & & \vdots \\
    \vdots & & 1 & -2 & 1 & 0\\
    \vdots & & & 1 & -2 & 1\\
    0 & \ldots & \ldots & 0 & -h & h 
    \end{array}
    \right). 
\end{align}
Moreover, $\mathcal{C}$ and $\mathbbm{1}_\omega$ are two $(M+2)\times (M+2)$ diagonal matrices given by
\begin{align*}
    (\mathcal{C})_{i,i}^n=\begin{cases}
    b_{\Gamma_0}(t_n),& \text{ if } x_i=0,\\
    b(x_i,t_n),& \text{ if }i\in \llbracket 1, M\rrbracket,\\
    b_{\Gamma_1}(t_n),&\text{ if }x_i=L,
    \end{cases}
    \quad (\mathbbm{1}_\omega)_{i,i}=\begin{cases}
    1,&\text{ if }x_i\in \omega,\\
    0,&\text{ if }x_i\notin \omega,
    \end{cases}
\end{align*}
with $b:\Omega\times (0,T) \to \mathbb{R}$ and $b_{\Gamma_0},b_{\Gamma_1}:(0,T)\to \mathbb{R}$ being continuous. The function $g_i$ denotes the approximation of the initial conditions. Finally, $v^n=\left(\begin{array}{c} v_0^n\\ \vdots\\ v_{M+1}^n \end{array} \right)$, $n\in \llbracket 0,N \rrbracket$, denotes a control function acting on the subset $\omega\subset \Omega$.

We point out that the discrete system \eqref{fully-discrete-heat-dbc} is the result of applying different schemes to approximate the derivatives. In particular, we use suitable centered finite difference method for space variable and an implicit Euler scheme for the time variable to the differential operators. More precisely, for the interior points we use the classical centered finite difference for the operator $\partial_{x}^{2} $ and we use 
\begin{align*}
\partial_{x} u(0)\approx \dfrac{1}{h}(u_1-u_0)+\mathcal{O}(h), \quad \partial_{x} u(1)\approx \dfrac{1}{h}( u_{M+1}-u_{M})+\mathcal{O}(h),
\end{align*}
to approximate the spatial derivatives of $y$ at the points $x=0$ and $x=1$, respectively. 

We can introduce a notion of controllability for the fully-discrete scheme. Specifically, we say that system \eqref{fully-discrete-heat-dbc} is null controllable at time $T>0$ if for any initial datum $g\in \mathbb{R}^{M+2}$, there exists a control $v=v_i^j$, $i\in \{0,\ldots,M+1\}$ and $j\in \{1,\ldots,N\}$ such that the corresponding solution fulfills 
\begin{align*}
    y^{N}_{i}=0,\quad \forall i\in\{0,\ldots,M+1\}.
\end{align*}

Following the previous setting and the duality between control and observability, we write the adjoint of the discrete system \eqref{fully-discrete-heat-dbc}, which is given by
\begin{align}
    \label{discrete:adjoint:problem}
    \begin{cases}
    -\dfrac{z^{n+1}-z^{n}}{\Delta t}-\mathcal{A}_h^{\ast} z^{n}+\mathcal{C}^{n} z^{n}=0,& \text{ for } n\in \llbracket 1,N\rrbracket,\\
    z_i^N=w_i,&\text{ for } i\in \llbracket 0,M+1 \rrbracket,
    \end{cases}
\end{align}
where $w_i$ denotes the approximation of the initial datum.

To study the controllability of the fully-discrete system \eqref{fully-discrete-heat-dbc} and, equivalently the observability inequality for the adjoint system \eqref{discrete:adjoint:problem},  it is necessary to rewrite our discrete control problem to make an analysis as close as possible to the continuous environment. It is worth mentioning that this will only help us as a guide, since in the discrete setting the appearance of new terms and changes in the concepts of control and observation are natural, as we will see in what follows.

\subsubsection{Spatial discretization}
\label{sec:spatial:discretization}
Now, we devote to give the notation for the spatial discretization case. For $x\in\mathcal{K}$, we define the translation operators $\taut_{\pm}(x)=x\pm h/2$. For $\mathcal{W}\subseteq\mathcal{K}$, with the translation operators, we can build two dual set given by
$$\mathcal{W}'=\taut_{-}(\mathcal{W})\cap \taut_{+}(\mathcal{W}),\quad \mathcal{W}^*=\taut_-(\mathcal{W})\cup \taut_+(\mathcal{W}).$$
For instance, if $\mathcal{W}$ has $N\in\mathbb{N}$ points then $\mathcal{W}'$ and $\mathcal{W}^{\ast}$ will have $N-1$ and $N+1$ points respectively. This situation can be seen more clearly in Figure \ref{spatial:sets}:
\begin{figure}[H]
    \centering
    \includegraphics[scale=1]{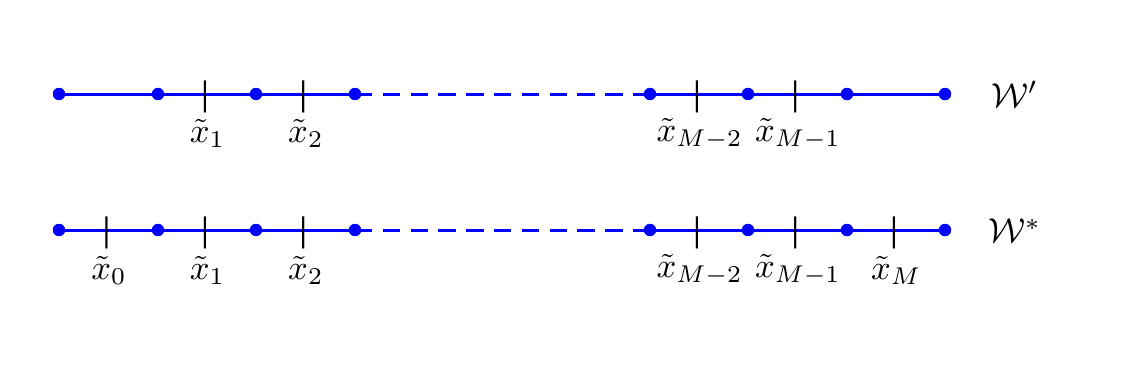} 
    \caption{The sets $ \mathcal{W}'$ and $\mathcal{W}^*$.}
    \label{spatial:sets}
\end{figure}
The dual of a dual set will be denoted as  $\overline{\mathcal{W}}:=\left(\mathcal{W}^{*}\right)^{\ast}$ and $\mathring{\mathcal{W}}:=\left(\mathcal{W}'\right)'$. We note that if  $\mathring{\overline{\mathcal{W}}}=\mathcal{W}$, it follows that for two consecutive points $x_{i},x_{i+1}\in \mathcal{W}$ we have $x_{i+1}-x_{i}=h$. Thus, any subset  
$\mathcal{W}\subset \mathcal{K}$ that verifies $\mathring{\overline{\mathcal{W}}}=\mathcal{W}$ will be called
regular mesh. We denote by $C(\mathcal{W})$ the set of function defined in $\mathcal{W}\subset{\mathcal{K}}$. Then, for $u\in C(\mathcal{W})$ the translation operators is given by
$\taut_{\pm} u(x)=u(x\pm h/2)$. We note that $\taut_{\pm}:C(\mathcal{W})\longrightarrow C(\mathcal{W}^{\ast})$. Moreover, for $u\in C(\mathcal{W}^{\ast})$ we have that $\taut_{\pm}u\in C(\overline{\mathcal{W}})$. 
We define the average and the difference operator for $u\in C(\mathcal{W})$ as
\begin{align*}
    A_{h} u=\dfrac{\taut_+ u+\taut_- u}{2},\quad
    D_{h} u=\dfrac{\taut_+ u-\taut_- u}{h},
\end{align*}
respectively.
For $\mathcal{W}\subseteq \mathcal{K}$ being a uniform mesh, we define the integral of $u$ on $\mathcal{W}$ as follows:
\begin{align*}
    \int_{\mathcal{W}} u=h\sum_{x\in \mathcal{W}} u(x).
\end{align*}

For functions sampled on $\mathcal{W}$, we define the $L^2$-inner product as follows
\begin{align*}
    \langle u,v \rangle_{L_{h}^2(\mathcal{W})}=\int_{\mathcal{W}} u\,v=h\sum_{x\in \mathcal{W}} u(x)v(x).
\end{align*}
The associated norm will be denoted by $\|u\|_{L_{h}^2(\mathcal{W})}$. Analogously, we define the $L_{h}^\infty(\mathcal{W})$-norm as
$$\|u\|_{L_{h}^\infty(\mathcal{W})}=\sup_{x\in \mathcal{W}} |u(x)|.$$
In addition, we define the $\mathbb{L}_{h}^{2}$-norm for $u\in C(\overline{W})$ by $\left\|u \right\|_{\mathbb{L}_{h}^{2}(\mathcal{W})}^2:=\left\| u\right\|_{L^{2}_{h}(\mathcal{W})}^2+\left\|u \right\|_{L^{2}_{h}(\partial\mathcal{W})}^2$, where
$\displaystyle \left\|u \right\|^{2}_{L_{h}^{2}(\partial\mathcal{W})}:=\sum_{x\in\partial\mathcal{W}}u(x)$.

To introduce the boundary conditions, we define the outward normal for $x\in \partial \mathcal{M}$  as
\begin{equation*}
 n(x):=\begin{cases}\ \ 
    1\ & \tau_{-}(x)\in \mathcal{M}^{\ast} \mbox{ and } \tau_{+}(x)\notin \mathcal{M}^{\ast},\\
    -1& \tau_{-}(x)\notin  \mathcal{M}^{\ast} \mbox{ and } \tau_{+}(x)\in \mathcal{M}^{\ast},\\
     \ \ 0 &\ \mbox{otherwise} .\end{cases}
\end{equation*}
We indicate by $\partial \mathcal{M}^{+}$ (resp. $\partial \mathcal{M}^{-}$) the set of points such that $n(x)=1$ (resp. $n(x)=-1$).  For every  $x \in \partial \mathcal{M},$ we also introduce the trace operator for $u\in C(\mathcal{M}^{\ast})$ as
\begin{equation*}
\ t_{r}(u)(x):=\begin{cases}
    \tau_{-}u(x)\ & n(x)=1,\\
    \tau_{+}u(x)& n(x)=-1,\\
     \ \ 0 & n(x)=0 .
     \end{cases}
\end{equation*}




\subsubsection{Time discretization}
\label{sec:time:discretization}
Now, we devote to introduce some notations in order to define the discretization of the time variable. To do this, we recall that $\Delta t= T/N$, with $N\in \mathbb{N}$. We define
\begin{align*}
    \mathcal{N}=&\{j\Delta t\,;\, j\in \llbracket 1, N\rrbracket\}, \quad \overline{\mathcal{N}}=\mathcal{N}\cup \{0\},\\
    \mathcal{N}^{\ast}=&\{(j-1/2)\Delta t\,;\, j\in \llbracket1,N\rrbracket\},\quad \overline{\mathcal{N}}^*=\mathcal{N}^{\ast} \cup \{T+\Delta t/2\}. 
\end{align*}

The schemes of these sets can be seen in Figure \ref{figuresets03}

\begin{figure}[H]
    \centering
    \includegraphics[scale=0.8]{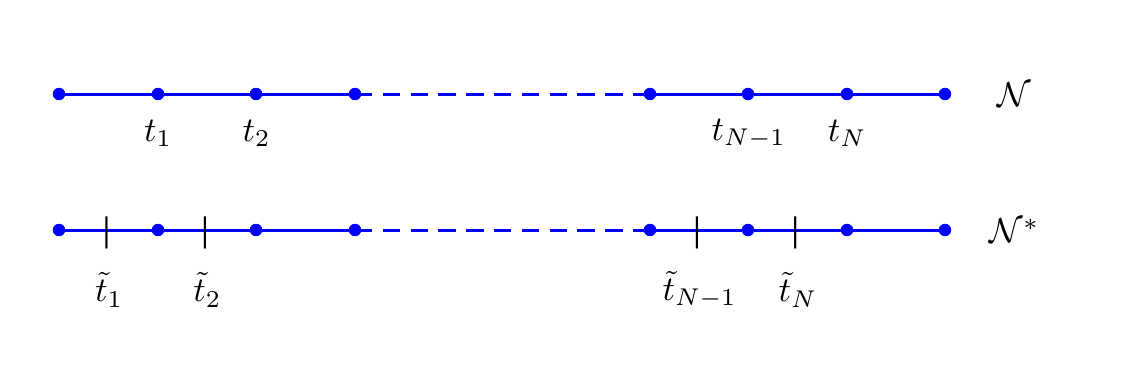}
    \caption{The sets $\mathcal{N}$ and $\mathcal{N}^*$}
    \label{figuresets03}
\end{figure}

We define the time discrete derivative of a function $y:[0,T]\to \mathbb{R}$ sampled on $\mathcal{N}$ as follows
\begin{align*}
    D_{t} y:=\dfrac{\taut^+ y-\taut^- y}{\Delta t}\quad \text{in } \mathcal{N}^{\ast},
\end{align*}
where $\taut^\pm y(t)=y(t\pm \Delta t/2)$, $t\in \mathcal{N} $.\\
Similarly to the spatial discrete variable, we define the outward normal for $t\in \partial \mathcal{N}$, where $\partial\mathcal{N}:=\{0,T\}$,  as
\begin{equation*}
 n(t):=\begin{cases}\ \ 
    1\ & t=T,\\
    -1& t=0,\\
     \ \ 0 &\ \mbox{otherwise} .\end{cases}
\end{equation*}
We indicate by $\partial \mathcal{N}^{+}$ (resp. $\partial \mathcal{N}^{-}$) the set of points such that $n(t)=1$ (resp. $n(t)=-1$).  

\subsection{Main results}
Given the setting previously introduced, we can reformulated the discrete controlled system \eqref{fully-discrete-heat-dbc} in the following way. For a discrete function $y\in C(\overline{\mathcal{M}}\times \overline{\mathcal{N}})$, we consider the following discrete system 
\begin{align}
\label{semi-discrete:model:ops}
    \begin{cases}
    D_{t} y-D_{h}^2 \taut^{+}y+\taut^{+}(b y)=\mathbbm{1}_\omega v,&  (x,t)\in \mathcal{M}\times \mathcal{N}^\ast,\\
    D_{t} y(0,t)-D_{h} \taut^{+}_{+}y(0,t)+b_{\Gamma_0}(t)\taut^{+} y(0,t)=0,&  t\in \mathcal{N}^\ast,\\
    D_{t} y(1,t)+D_{h} \taut^{+}_{-}y(1,t)+b_{\Gamma_1}(t)\taut^{+}y(1,t)=0,& t\in \mathcal{N}^\ast,\\
    y(x,0)=g(x),& x\in \overline{\mathcal{M}}.
    \end{cases}
\end{align}
Here, $g\in C(\overline{\mathcal{M}})$ stands for the initial data of the system and $\mathbbm{1}_{\omega}v\in C(\mathcal{M}\times \mathcal{N}^{\ast})$ is the control function which acts on a discrete subset of $\omega \subset \Omega$. Notice that \eqref{semi-discrete:model:ops} coincide with \eqref{fully-discrete-heat-dbc} using the notation previously introduced. 

Recall that our interest is the null controllability of \eqref{semi-discrete:model:ops}. That is, prove the existence of a control function $v$ such that it drives the initial data $g$ of system \eqref{semi-discrete:model:ops} to zero in time $T>0$.  To perform this analysis, we will reduce our problem to establishing an observability inequality for $q\in C(\overline{\mathcal{M}}\times\overline{\mathcal{N}}^{\ast})$ solution of the following adjoint problem
\begin{equation}\label{discrete primal system}
    \begin{cases}
    -D_t q-D_{h}^2 \taut^- q+b\taut^- (q)=0,& (x,t)\in \mathcal{M}\times \mathcal{N},\\
    -D_t q(0,t)-D_{h} \taut^{-}_{+} q(0,t)+b_{\Gamma_0}(t)\taut^- q(0,t)=0,& t\in \mathcal{N},\\
    -D_t q(1,t)+D_{h} \taut^{-}_{-} q(1,t)+b_{\Gamma_1}(t) \taut^- q(1,t)=0,& t\in \mathcal{N},\\
    q(x,T+\Delta t/2)=q_{T}(x),& x\in \overline{\mathcal{M}}.
    \end{cases}
\end{equation}

To establish this observability inequality, we will first prove a Carleman inequality associated with the solution of \eqref{discrete primal system}. Hence, we introduce the following weight function. Let $\mathcal{B}$ be an open subset of $\Omega$ and let $\mathcal{B}_0\Subset \mathcal{B}$ be a nonempty open interval. Then, it is a well-known that there exists a smooth function $\psi:\overline{\Omega}\to \mathbb{R}$ which satisfies (see e.g. \cite{fursikov1996controllability})
\begin{equation}\label{funcion-peso-1}
\psi>0,\text{ in }\Omega,\quad |\psi '(x)|\geq c,\, \forall x\in \Omega\setminus\overline{\mathcal{B}_0},\quad \psi'(0)>0\text{ and }\psi'(1)<0,
\end{equation}
with $c>0$ being a positive constant.

For $\lambda\geq 1$, we introduce the functions
\begin{align}\label{funcion-peso-2}
    \varphi(x)=e^{\lambda \psi(x)}-e^{\lambda K }<0,\quad \phi(x)=e^{\lambda \psi(x)},\quad x\in \overline{\Omega},
\end{align}
where $K>\|\psi\|_{C(\overline{\Omega})}$, and 
\begin{equation}\label{funcion-peso-3}
    \theta(t)=\frac{1}{(t+\delta T)(T+\delta T-t)},\quad t\in [0,T],
\end{equation}
for some $0<\delta <1/2$. For $\tau\geq 1$, we also define $s(t):=\tau \theta(t)$. For our computations, we will write $r(x,t)=e^{s(t)\varphi(x)}$ and $\rho(x,t)=(r(x,t))^{-1}$.

Finally,  we will use the following notation:
\begin{align*}
\mathcal{P}:=-D_t - D_h^2, \ N_{\Gamma_0}:=D_t+D_h\taut_{+}^{-}, \ N_{\Gamma_1}:=D_t-D_h\taut_{+}^{-}.
\end{align*}

Collecting the previous notation and weighted function, we can state our first main result, which is a fully-discrete Carleman inequality for the discrete adjoint system \eqref{discrete primal system}.
\begin{theorem}\label{theo:discrete:carleman}
Let  $\mathcal{B}$ be an open subset of $\Omega$, $\mathcal{B}_0\Subset \mathcal{B}$ be a nonempty open interval, $\psi$ given by \eqref{funcion-peso-1} and $\varphi$ defined according to \eqref{funcion-peso-2}. For $\lambda\geq 1$ sufficiently large, there exist $C>0$, $\tau_{0}\geq 1$, $h_{0}>0$, $\epsilon_{0}>0$ depending on $\mathcal{B}$, $\mathcal{B}_0$, $T$ and $\lambda$ such that 
\begin{multline}\label{eq:discrete:carleman}
    \int_{\mathcal{M}\times\mathcal{N}}\taut^{-}(s^{-1})|D_ {t}q|^{2}+\int_{\mathcal{M}\times\mathcal{N}^{\ast}}s ^{-1}|D_{h}^{2}q|^{2}+\int_{\mathcal{M}^{\ast}\times\mathcal{N}^{\ast}} s|D_{h}q|^{2}+ \int_{\mathcal{M}\times\mathcal{N}^{\ast}}s^{3}|q|^{2}\\
    +\int_{\mathcal{M}\times\mathcal{N}^{\ast}} s|A_{h}D_{h}q|^{2}+\int_{\partial\mathcal{M}\times\mathcal{N}^{\ast}}st_{r}(|D_{h} q|^2)+\int_{\partial \mathcal{M}\times\mathcal{N}^{\ast}}s^{3}t_{r}(A_{h}(|q|^{2}))+\int_{\partial \mathcal{M}\times \mathcal{N}}|D_{t}q|^{2}\\
    \leq  C_{\lambda_{1}}\left( \int_{\mathcal{M}\times\mathcal{N}}\tau^{-}(r^{2})|\mathcal{P}q|^{2}+\int_{\mathcal{B}\times\mathcal{N}^{\ast}}s^{3}r^{2}|q|^{2}+\int_{\mathcal{N}}(\taut^{-}r)^{2}|N_{\Gamma_{0}}q|^{2}+C_{\lambda}\int_{\mathcal{N}}(\taut^{-}r)^{2}|N_{\Gamma_{1}}q|^{2}\right.\\
    +\left. h^{-2}\int_{\partial\mathcal{M}\times\partial\mathcal{N}}|\taut^{+}rq|^{2}+h^{-2}\int_{\mathcal{M}\times\partial\mathcal{N}}|\taut^{+}rq|^{2} \right),
\end{multline}
for all $\tau\geq \tau_{0}(T+T^{2})$, $0<h<h_{0}$, $\Delta t>0$ and $0<\delta \leq 1/2$ such that
$\displaystyle \frac{\tau h}{\delta T^{2}}\leq \epsilon_{0}$ and  $\displaystyle \frac{\tau^{4}\Delta t }{\delta^{4}T^{6}}\leq \epsilon_{0}$.
\end{theorem}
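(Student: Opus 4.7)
The plan is to adapt the discrete Fursikov--Imanuvilov strategy developed in \cite{discreto:calor, BoyerRousseau2010} to the present situation, where the essential novelty is the presence of the dynamic boundary equations governed by $N_{\Gamma_0}$ and $N_{\Gamma_1}$. First, I would introduce the conjugated variable $p=rq$ (so $q=\rho p$) and expand $r\mathcal{P}(\rho p)$ using discrete product rules for $D_t$ and $D_h^2$. This splits the operator into a sum $\mathcal{A}p+\mathcal{B}p+\mathcal{R}p$, where $\mathcal{A}$ collects the formally self-adjoint principal terms (of order $s^2$ and $D_h^2$), $\mathcal{B}$ collects the formally skew-adjoint terms (the time derivative and the first order $D_h$ contribution coming from $\lambda \psi' D_h$), and $\mathcal{R}$ gathers the discrete commutator remainders in $h$ and $\Delta t$ produced because $\taut^\pm$ and $A_h, D_h$ do not commute exactly with multiplication by the weight $r$.

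Second, I would exploit the identity $\|\mathcal{A}p\|^2+\|\mathcal{B}p\|^2+2\langle \mathcal{A}p,\mathcal{B}p\rangle \leq 2\|r\mathcal{P}q\|^2+2\|\mathcal{R}p\|^2$ on $\mathcal{M}\times \mathcal{N}^\ast$. The double product is the heart of the estimate: after discrete integration by parts in both space and time, it produces (i) the positive interior terms $\int s^3 |p|^2$ and $\int s|D_h p|^2$ in regions where $|\psi'|\geq c>0$, together with $\int s^{-1}|D_h^2 p|^2$ and $\int \taut^-(s^{-1})|D_t p|^2$ obtained from $\|\mathcal{A}p\|^2$ and $\|\mathcal{B}p\|^2$; (ii) spatial trace terms at $x=0$ and $x=1$ whose signs are controlled by the choice $\psi'(0)>0$, $\psi'(1)<0$; and (iii) time trace terms at $t=0, t=T$ whose magnitudes blow up like $h^{-2}$ after undoing the conjugation, giving precisely the two rightmost terms of \eqref{eq:discrete:carleman}.

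Third, the boundary-in-space contributions are matched with the conjugated versions of $N_{\Gamma_0}q$ and $N_{\Gamma_1}q$. The sign condition on $\psi'$ ensures that the dominant traces $\int_{\partial \mathcal{M}\times \mathcal{N}^\ast} s\, t_r(|D_h q|^2)$, $\int_{\partial\mathcal{M}\times\mathcal{N}^\ast}s^3 t_r(A_h(|q|^2))$, and $\int_{\partial \mathcal{M}\times\mathcal{N}}|D_t q|^2$ can be \emph{kept} on the left, while the remaining boundary residue is absorbed by $\int_{\mathcal{N}}(\taut^- r)^2|N_{\Gamma_i}q|^2$ on the right, explaining the two corresponding terms in \eqref{eq:discrete:carleman}. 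Once the global estimate holds on $\Omega$, a standard cutoff argument supported in $\mathcal{B}$ with $\chi\equiv 1$ on $\mathcal{B}_0$ lets me trade the full-domain $s^3$-term for a localized observation $\int_{\mathcal{B}\times\mathcal{N}^\ast}s^3 r^2|q|^2$, the intermediate $s|D_h p|^2$ term being reabsorbed.

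Fourth, converting back from $p$ to $q$ introduces further discrete remainders of the schematic form $h\cdot s^k r^2 |q|^2$ and $\Delta t\cdot s^k r^2|q|^2$, together with $\mathcal{R}p$ from step one. These are dominated by the positive left-hand side provided $\tau h/(\delta T^2)\leq \epsilon_0$ and $\tau^4\Delta t/(\delta^4 T^6)\leq \epsilon_0$; this is exactly where the two smallness hypotheses of the theorem enter. Choosing $\lambda$ large at the very end absorbs all polynomial-in-$\lambda$ lower order terms generated along the way. The main obstacle I anticipate is the simultaneous bookkeeping of dynamic boundary residues and discrete commutators at the endpoints $x=0, x=1$: in the continuous case one uses the boundary PDE pointwise, but here $\taut^\pm$ interacts with the weight on a thin layer at the boundary, and one must verify that every extraneous trace produced this way is either dissipative (absorbable into the trace terms on the left) or of size $O(h)$ or $O(\Delta t)$ times an already-controlled quantity, so that the stated smallness conditions are sufficient.
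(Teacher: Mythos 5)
Your proposal follows essentially the same route as the paper: conjugate with the weight, split the conjugated operator into a ``self-adjoint'' part (the $s^2$ and $D_h^2$ terms), a ``skew-adjoint'' part (the $D_t$ and first-order $D_h$ terms) and a discrete remainder, expand the squared norm of their sum, compute the cross products by discrete integration by parts, treat the dynamic boundary equations by conjugating $N_{\Gamma_0}$, $N_{\Gamma_1}$ separately and keeping the resulting traces on the left, localize the observation using $|\psi'|\geq c$ outside $\mathcal{B}_0$, and absorb all discrete commutator errors and the $h^{-2}$ time-boundary terms under the smallness conditions on $\tau h/(\delta T^2)$ and $\tau^4\Delta t/(\delta^4 T^6)$ before returning to the original variable. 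This matches the paper's Steps 1--5 (conjugation, boundary cross products $J_{ij}$, $K_{ij}$, interior cross products $I_{ij}$, recovery of the $D_t$ and $D_h^2$ terms, and the final change of variable), so no further comment is needed.
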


The key difficulties and novelties of the present Carleman estimate can be summarized as follows:
\begin{enumerate}
\item Although the Carleman inequality \eqref{eq:discrete:carleman} has the structure of the continuous case (see \cite[Lemma 3.2]{maniar2017null}), a large number of extra terms appear when following the classical methodology to establish a Carleman inequality (for instance, \cite{fursikov1996controllability}), due to the discrete nature of the problem. The major problem is that it is not straightforward to absorb these extra terms with those on the left-hand side of the inequality, as is the case in the continuous setting. This means that the analysis must be done more carefully. For that reason and to be clearer, all the details are included in the proof of the Carleman estimate. 

\item Following the previous difficulty, let us notice that the last two terms on the right hand side of \eqref{eq:discrete:carleman}  are new with respect to the continuous case (see \cite[Lemma 3.2]{maniar2017null}), that is, they are characteristic of our discrete environment. Additionally, it is not possible to absorb this extra terms as we will see in the controllability result.

\item The novelty of the discrete Carleman estimate presented in this work is that we do not consider null Dirichlet boundary conditions, then our estimates are new and nonstandard. Namely, as far we know, this is the first discrete Carleman inequality considering dynamic boundary conditions.

\item Another novelty lies in the extension for arbitrary order of the discrete operator applied on the Carleman weight function which have been a useful tool to simplify and to obtain our discrete Carleman estimates. We postpone this discussion to the next section for more details (see Theorem \ref{prop:time:estimate}). From our point of view, this result could be used to answer very interesting problems proposed in \cite{GC-HS-2021}, such as the insensitizing and hierarchic control.
\end{enumerate}

As in the continuous case, this discrete Carleman estimate \eqref{eq:discrete:carleman} implies an observability inequality for system \eqref{discrete primal system}, which is the following second main result of this work. 
\begin{theorem}\label{observ-SZ}
Let $q_{T}\in L_h^2(\mathcal{M})$ and $q\in C(\overline{\mathcal{M}}\times \overline{\mathcal{N}}^*)$ be the corresponding solution of the discrete adjoint system \eqref{discrete primal system}.  Then, there exists a constant $C_{\text{obs}}>0$ such that  the solution $q$ of \eqref{discrete primal system} satisfies
\begin{equation}\label{ine:discrete:obs}
    \left\|\taut^{+}q(0) \right\|^{2}_{\mathbb{L}_{h}^{2}(\mathcal{M})}\leq C_{\text{obs}}\left(\int_{\omega\times\mathcal{N}^{\ast}}|q|^{2}+ e^{-\frac{C_{1}}{h^{\min\{\mu/4,1\}}}}\left\|q_{T} \right\|^{2}_{\mathbb{L}_{h}^{2}(\mathcal{M})}\right)
\end{equation}
with $C_{\text{obs}}:=e^{C(1+\frac{1}{T}+\left\|b \right\|_{\mathbb{L}^{\infty}_{h}(\overline{\mathcal{M}})}^{2/3}+T\left\|b \right\|_{\mathbb{L}^{\infty}_{h}(\overline{\mathcal{M}})})}$ and $\mu\geq 1$.
\end{theorem}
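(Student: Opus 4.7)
The strategy is the Fursikov--Imanuvilov scheme: derive \eqref{ine:discrete:obs} from Theorem~\ref{theo:discrete:carleman} applied to the solution $q$ of \eqref{discrete primal system}, paying special attention to the two $h^{-2}$ boundary-in-time terms that are intrinsic to the discrete setting. Since $q$ satisfies the adjoint equations, $\mathcal{P}q$, $N_{\Gamma_0}q(0,\cdot)$ and $N_{\Gamma_1}q(1,\cdot)$ are all of the form $b$ (or $b_{\Gamma_j}$) times a time-shift of $q$. Substituting these identities in \eqref{eq:discrete:carleman} produces potential contributions on the right-hand side of the type $\|b\|_{\mathbb{L}_h^\infty}^2\!\int \taut^-(r^2)|\taut^- q|^2$, together with analogous trace terms. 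For $\tau\geq C(T+T^2)(1+\|b\|_{\mathbb{L}_h^\infty}^{2/3})$ one checks pointwise that $\|b\|_{\mathbb{L}_h^\infty}^2\,\taut^-(r^2)\leq \tfrac{1}{2}s^3$, so these potentials are absorbed by the terms $\int s^3|q|^2$ and $\int_{\partial\mathcal M}s^3\,t_r(A_h(|q|^2))$ already present on the left-hand side.

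I would then fix $\lambda$ at the Carleman threshold and pick $\tau$ to be the largest value compatible with the mesh constraints $\tau h\leq \epsilon_0\delta T^2$ and $\tau^4\Delta t\leq \epsilon_0\delta^4 T^6$; under the assumption $\Delta t\lesssim h^{\mu}$ this yields $\tau_{\max}\sim h^{-\min\{1,\mu/4\}}$. On the middle time window $[T/4,3T/4]$ the weight $\theta$ is comparable to $1/T^2$, so the Carleman weights $s^k r$ are bounded above and below by (explicit) constants there. Dropping every nonnegative term on the left except $\int_{\mathcal M\times\mathcal N^*}s^3|q|^2$ and restricting to this window, one obtains
\begin{equation*}
\int_{T/4}^{3T/4}\!\!\int_{\mathcal M}|q|^2 \;\leq\; C_{\text{obs}}\!\left(\int_{\omega\times\mathcal N^*}|q|^2 \,+\, h^{-2}\!\!\int_{(\mathcal M\cup\partial\mathcal M)\times\partial\mathcal N}|\taut^+ r\,q|^2\right).
\end{equation*}

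The two endpoints of $\partial\mathcal N=\{0,T\}$ both give $\theta\sim 1/(\delta T^2)$, hence $|\taut^+ r|\leq\exp(-c\tau_{\max}/(\delta T^2))$ at both; with $\tau_{\max}\sim h^{-\min\{1,\mu/4\}}$ this exponential defeats the $h^{-2}$ prefactor and produces precisely the expected $\exp(-C_1/h^{\min\{\mu/4,1\}})\|q_T\|_{\mathbb{L}_h^2}^2$ contribution from the $t=T$ piece, together with the same exponentially small factor multiplying $\|\taut^+ q(0)\|_{\mathbb{L}_h^2}^2$ from the $t=0$ piece (to be absorbed at the end for $h$ small). Finally, a standard discrete backward dissipation estimate for \eqref{discrete primal system} --- obtained by multiplying the equation by $\taut^- q$, summing by parts on $\mathcal M$ and using the dynamic boundary conditions --- yields
\begin{equation*}
\|\taut^+ q(0)\|_{\mathbb{L}_h^2(\mathcal M)}^2 \;\leq\; e^{CT(1+\|b\|_{\mathbb{L}_h^\infty})}\,\|\taut^+ q(t)\|_{\mathbb{L}_h^2(\mathcal M)}^2, \qquad t\in[T/4,3T/4].
\end{equation*}
Averaging in $t$ and inserting the previous bound delivers \eqref{ine:discrete:obs}, with the stated $C_{\text{obs}}$ collecting the contributions of (i) the Carleman normalisation (the $1/T$), (ii) the absorption of the potential (the $\|b\|^{2/3}$) and (iii) the Gronwall factor from the dissipation step (the $T\|b\|$).

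The main obstacle I anticipate is the triple balancing for $\tau$: it must be large enough to absorb the interior and boundary potentials (forcing the $\|b\|^{2/3}$ term in $C_{\text{obs}}$) and large enough that the exponential $\exp(-c\tau/(\delta T^2))$ beats the $h^{-2}$ loss at the temporal boundary, while remaining under the threshold $\tau_{\max}$ imposed by the compatibility constraints of Theorem~\ref{theo:discrete:carleman}. The optimal trade-off is precisely what fixes the exponent $\min\{\mu/4,1\}$ appearing in \eqref{ine:discrete:obs}; tracking all constants cleanly through the absorption, restriction and energy steps is the delicate bookkeeping part.
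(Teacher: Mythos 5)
Your overall scheme (Carleman estimate $+$ absorption of the potential terms for $\tau\gtrsim T^2\|b\|_{\mathbb{L}_h^\infty}^{2/3}$ $+$ restriction to a middle time window $+$ a backward dissipation/Gronwall estimate to pass from $\|\taut^{+}q(t)\|$ to $\|\taut^{+}q(0)\|$) is exactly the one used in the paper, and those individual steps are fine. However, there is a genuine error in the parameter balancing, which is the crux of the theorem. You fix $\delta$ and push $\tau$ up to the ceiling allowed by the mesh constraints, obtaining $\tau_{\max}\sim h^{-\min\{1,\mu/4\}}$, and you use the largeness of $\tau$ to defeat the $h^{-2}$ prefactor at the temporal boundary. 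But the Carleman weight $r=e^{\tau\theta\varphi}$ with $\varphi<0$ then degenerates \emph{everywhere}, not only at $t\in\{0,T\}$: on the middle window $[T/4,3T/4]$ one has $\theta\asymp 1/T^2$, so $r^2$ ranges over $[e^{-C\tau/T^2},e^{-c\tau/T^2}]$, and the ratio between the (unweighted) observation term you keep on the right and the lower bound you extract on the left carries a factor $e^{C\tau/T^2}=e^{Ch^{-\min\{1,\mu/4\}}/T^2}$. Your $C_{\text{obs}}$ would therefore blow up as $h\to 0$, which contradicts the $h$-independent constant $C_{\text{obs}}=e^{C(1+\frac1T+\|b\|^{2/3}+T\|b\|)}$ claimed in the statement and renders the estimate useless for the $\phi(h)$-controllability application.

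The correct mechanism, and the one in the paper, is to keep $\tau$ \emph{fixed} at $\tau=\tau_2(T+T^2+T^2\|b\|_{\mathbb{L}_h^\infty}^{2/3})$ and to shrink the regularization parameter of the time weight, $\delta=(h/h_1)^{\min\{\mu/4,1\}}\delta_1$. Since $\theta(t)=\big((t+\delta T)(T+\delta T-t)\big)^{-1}$ satisfies $\theta\leq 16/T^2$ on $[T/4,3T/4]$ uniformly in $\delta\in(0,1/2]$ while $\theta\sim 1/(\delta T^2)$ at $t\in\{0,T\}$, this choice makes $\tau/(\delta T^2)\sim(\epsilon_0/h)^{\min\{\mu/4,1\}}$ large \emph{only} where the temporal boundary terms live, beating $h^{-2}$ there, while the weights on the middle window stay bounded above and below by constants depending only on $T$, $\lambda$ and $\|b\|$. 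The exponent $\min\{\mu/4,1\}$ comes from making this $\delta$ compatible with both constraints $\tau h/(\delta T^2)\leq\epsilon_0$ and $\tau^4\Delta t/(\delta^4T^6)\leq\epsilon_0$ under $\Delta t\lesssim h^{\mu}$ — so your identification of the exponent is right, but it must be realized through $\delta$, not through $\tau$. (A minor further difference: the paper bounds both endpoint contributions, at $t=0$ and $t=T$, directly by $\|q_T\|^2$ using the backward monotonicity $\|\taut^{+}q(0)\|\leq\|\taut^{+}q(t)\|$ obtained after the change of variable making the potential nonnegative, rather than absorbing an $\epsilon\|\taut^{+}q(0)\|^2$ term; your variant would also work.)
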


We observe the following facts.
\begin{enumerate}
\item Here again we have an extra term with respect to the continuous case, which is the last one in previous inequality (compare with \cite[Proposition 4.1]{maniar2017null}). This motivates us to call this inequality \emph{relaxed observability estimates}.

\item Additionally, without this second term in the right-hand side, such a discrete observability inequality is known to not hold in general. In fact, this would imply the exact null controllability of the fully-discrete problem \eqref{eq:original:model}, a contradiction with O. Kavian’s counter-example (see e.g. \cite{zuazua2006heat}), in which high-frequency modes can be insensitive to the control.

\item Following the duality between controllability and observability, one expects that under this inequality the null controllability property for \eqref{eq:original:model} can be obtained. However, because it is only possible to obtain the relaxed Carleman estimates \eqref{eq:discrete:carleman}, this weak observability implies what we call a $\phi$-null-controllability result for \eqref{eq:original:model} (see e.g. \cite{boyer2013hum, BoyerRousseau2011} for a deep treatment of this concept). 

\item In our case, the function $\phi$ is given by
\begin{equation}\label{eq:phi}
    \phi(h)=e^{-\frac{C_{1}}{h^{\min\{\mu/4,1\}}}},
\end{equation}
which is similar to the obtained for the discrete heat equation with homogeneous Dirichlet boundary condition. 
\end{enumerate}

Finally, our third and last main result stablish the $\phi$-null-controllability property for system \eqref{semi-discrete:model:ops}.
\begin{theorem}\label{theo:control} 
For $h$ sufficiently small and any initial data $g\in C(\mathcal{\overline{M}})$, there exists a control function $v$ such that $\displaystyle \left\| v\right\|_{\mathbb{L}_h^{2}(\omega\times\mathcal{N}^{\ast})}^{2}\leq  C_{0}\left\|g \right\|_{\mathbb{L}^{2}_{h}(\mathcal{M})}^{2}$, and the solution to \eqref{semi-discrete:model:ops} satisfies $\displaystyle\left\| y(x,T)\right\|_{\mathbb{L}^{2}_{h}(\mathcal{M})}\leq  C_{\text{obs}}\sqrt{\phi(h)}\left\|g \right\|_{\mathbb{L}^{2}_{h}(\mathcal{M})}$.
\end{theorem}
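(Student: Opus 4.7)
The approach I would follow is the penalized Hilbert uniqueness method, adapted to the relaxed observability inequality \eqref{ine:discrete:obs} of Theorem \ref{observ-SZ}, in the spirit of \cite{boyer2013hum, BoyerRousseau2010, BoyerRousseau2014}. For a penalization parameter $\varepsilon>0$ to be tuned at the scale of $\phi(h)$, I would introduce
$$J_\varepsilon(q_T) := \frac{1}{2}\int_{\omega\times\mathcal{N}^{\ast}}|q|^2 + \varepsilon\,\|q_T\|_{\mathbb{L}_h^2(\mathcal{M})}^2 + \langle g,\,\taut^+ q(0)\rangle_{\mathbb{L}_h^2(\mathcal{M})},$$
where $q\in C(\overline{\mathcal{M}}\times\overline{\mathcal{N}}^{\ast})$ denotes the backward solution of the adjoint system \eqref{discrete primal system} with final datum $q_T\in\mathbb{L}_h^2(\mathcal{M})$. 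Being a continuous, strictly convex quadratic functional on the finite-dimensional space $\mathbb{L}_h^2(\mathcal{M})$ whose penalization term ensures coercivity, $J_\varepsilon$ admits a unique minimizer $q_T^\varepsilon$. Writing $q^\varepsilon$ for the corresponding adjoint trajectory, combining the Euler–Lagrange condition with the discrete forward/backward duality identity obtained by testing \eqref{semi-discrete:model:ops} against $q^\varepsilon$ and performing discrete integration by parts in $x$ and $t$, I would identify $v^\varepsilon := \mathbbm{1}_\omega q^\varepsilon$ as an admissible control for \eqref{semi-discrete:model:ops} and obtain the pairing relation $y^\varepsilon(\cdot,T) = -2\varepsilon\,q_T^\varepsilon$, where $y^\varepsilon$ is the solution of \eqref{semi-discrete:model:ops} driven by $v^\varepsilon$.

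The a priori estimates then follow from $J_\varepsilon(q_T^\varepsilon)\leq J_\varepsilon(0)=0$, which after Cauchy–Schwarz gives
$$\frac{1}{2}\|v^\varepsilon\|_{\mathbb{L}_h^2(\omega\times\mathcal{N}^{\ast})}^2 + \varepsilon\|q_T^\varepsilon\|_{\mathbb{L}_h^2(\mathcal{M})}^2 \leq \|g\|_{\mathbb{L}_h^2(\mathcal{M})}\,\|\taut^+ q^\varepsilon(0)\|_{\mathbb{L}_h^2(\mathcal{M})}.$$
Inserting \eqref{ine:discrete:obs} on the right, applying Young's inequality, and choosing $\varepsilon$ of order $\phi(h)$ (for instance $\varepsilon = C_{\text{obs}}\phi(h)$) allows the defect $C_{\text{obs}}\phi(h)\|q_T^\varepsilon\|^2$ coming from \eqref{ine:discrete:obs} to be absorbed by the penalization term $\varepsilon\|q_T^\varepsilon\|^2$. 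This yields simultaneously the control bound $\|v^\varepsilon\|_{\mathbb{L}_h^2(\omega\times\mathcal{N}^{\ast})}^2 \leq C_0\|g\|_{\mathbb{L}_h^2(\mathcal{M})}^2$ and the penalization bound $\varepsilon\|q_T^\varepsilon\|_{\mathbb{L}_h^2(\mathcal{M})}^2 \leq C\|g\|_{\mathbb{L}_h^2(\mathcal{M})}^2$. Using the identity $y^\varepsilon(\cdot,T)=-2\varepsilon q_T^\varepsilon$, the latter translates into
$$\|y^\varepsilon(\cdot,T)\|_{\mathbb{L}_h^2(\mathcal{M})}^2 = 4\varepsilon^2\|q_T^\varepsilon\|_{\mathbb{L}_h^2(\mathcal{M})}^2 \leq 4C\,\varepsilon\,\|g\|_{\mathbb{L}_h^2(\mathcal{M})}^2 \leq C_{\text{obs}}^2\,\phi(h)\,\|g\|_{\mathbb{L}_h^2(\mathcal{M})}^2,$$
which after taking square roots is the claimed $\phi$-null-controllability estimate with $v=v^\varepsilon$.

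The main obstacle is the clean derivation of the discrete duality identity between the forward controlled system \eqref{semi-discrete:model:ops} and the adjoint system \eqref{discrete primal system}. Because $y(0,\cdot)$ and $y(1,\cdot)$ obey dynamic (rather than Dirichlet) boundary laws, and because the time meshes $\mathcal{N}$ and $\mathcal{N}^{\ast}$ are staggered, performing discrete integration by parts in both variables generates several spatial boundary traces (through the $t_r$ operator) and temporal boundary traces whose combination must reduce, via the dynamic operators $N_{\Gamma_0}, N_{\Gamma_1}$, to exactly the pairings $\langle g,\taut^+ q(0)\rangle$ and $\langle y(\cdot,T), q_T\rangle$ that appear in $J_\varepsilon$. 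Once this bookkeeping is done correctly, the identification $y^\varepsilon(T) = -2\varepsilon\,q_T^\varepsilon$ falls out and the absorption step based on the choice $\varepsilon \sim \phi(h)$ is standard, so no further technical difficulty is expected.
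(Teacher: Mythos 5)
Your proposal is correct and follows essentially the same route as the paper: a penalized HUM functional with penalty parameter of order $\phi(h)$, existence of a unique minimizer by coercivity (via the relaxed observability inequality of Theorem \ref{observ-SZ}), the Euler--Lagrange condition combined with discrete duality to get $y(\cdot,T)=-c\,\varepsilon\,q_T^{\varepsilon}$, and the a priori bounds yielding the control cost and the $\sqrt{\phi(h)}$ decay of the final state. The only cosmetic differences are your normalization of the penalty term (hence the factor $2$ in the identification of $y(\cdot,T)$) and your use of $J_{\varepsilon}(q_T^{\varepsilon})\leq J_{\varepsilon}(0)=0$ in place of the paper's testing of the Euler--Lagrange identity against $\tilde q_T$ itself, which give equivalent estimates.
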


The proof of this Theorem is based on the use of the penalized HUM method (see \cite{boyer2013hum}). This approach allows us to characterize the controllability of the system through the asymptotic behavior of a certain penalized HUM functional, when the penalty parameter tends to zero. In our case, this penalisation parameter is $\phi(h)$ defined by \eqref{eq:phi}, where $h$ is the discrete parameter. We point out that $\phi(h)$ goes to zero as $h$ tends to zero, that is, we recover the continuous null controllability property.   Let us observe that the result contained in Theorem \ref{theo:control} allows us to see how the $\phi$-null-controllability of system \eqref{semi-discrete:model:ops} depends on the initial data $g$. This fact is important in view of applications to numerical analysis, since the initial data of the discrete system are approximations of the initial data of the continuous problem.

The paper is organized as follows. Section \ref{sec:preliminaries} is devoted to the notation and the preliminaries of discrete calculus formulas and discrete estimate to several application of the discrete operator on the Carleman weight function. In Section \ref{proofcarleman}, we prove the Carleman estimate presented in Theorem \ref{theo:discrete:carleman}. For sake of clarity, a large number of proofs of intermediate estimates are provided in Section \ref{sec:proof:carleman}. The relaxed observability inequality and the controllability result  (Theorems \ref{observ-SZ} and \ref{theo:control}, respectively) are proven in Section \ref{sec:obs}.

\section{Preliminaries}\label{sec:preliminaries}

\subsection{Discrete Calculus formulae}
In this section we state the elementary notions concerning discrete calculus formulas. We first present some useful identities for the average and difference operator. Then, we state the discrete integration by parts formulas for the spatial and time discrete variable. At the end, we apply these discrete calculus formulas to  prove that the proposed discrete approximation is stable with respect to the initial condition and the right-hand side.
\begin{lemma}[{\cite[Lemma 3.2 and 3.3]{boyer-2010}}]\label{lem:product:rule}
    Let $u,v$ be two functions in $C(\overline{\mathcal{M}})$. Then, for the difference operator we have
    $$D_{h} (uv)=D_{h} uA_{h} v+ A_{h} u D_{h}v,\quad \text{ in }  \mathcal{M}^{*},$$
    and for the average operator holds
    $$A_{h}(uv)=A_{h} u A_{h} v+\dfrac{h^2}{4} D_{h}uD_{h} v,\quad \text{ in }  \mathcal{M}^{*}.$$

\end{lemma}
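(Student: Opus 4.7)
The plan is to verify both identities by direct expansion of the defining formulas for $\taut_\pm$, $A_h$, and $D_h$, evaluated at an arbitrary point $x\in\mathcal{M}^*$. Since everything is pointwise, there is no real analytic content; it is purely algebraic.

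First, I would fix $x\in\mathcal{M}^*$ and introduce the shorthand $a:=u(x+h/2)$, $b:=u(x-h/2)$, $c:=v(x+h/2)$, $d:=v(x-h/2)$, so that
\begin{equation*}
\taut_+u(x)=a,\quad \taut_-u(x)=b,\quad \taut_+v(x)=c,\quad \taut_-v(x)=d,
\end{equation*}
and consequently $A_hu(x)=(a+b)/2$, $D_hu(x)=(a-b)/h$ (and analogous formulas for $v$). Crucially, the product $uv$ is also a function in $C(\overline{\mathcal M})$, so $\taut_\pm(uv)(x)=ac$ and $\taut_-(uv)(x)=bd$ respectively, which gives $A_h(uv)(x)=(ac+bd)/2$ and $D_h(uv)(x)=(ac-bd)/h$.

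For the difference-operator identity, I would compute the right-hand side as
\begin{equation*}
D_hu\,A_hv+A_hu\,D_hv=\frac{(a-b)(c+d)+(a+b)(c-d)}{2h},
\end{equation*}
expand the numerator, cancel the cross terms $ad$ and $bc$, and obtain $(2ac-2bd)/(2h)=(ac-bd)/h$, matching $D_h(uv)(x)$. For the average-operator identity, the right-hand side becomes
\begin{equation*}
A_hu\,A_hv+\frac{h^2}{4}D_hu\,D_hv=\frac{(a+b)(c+d)}{4}+\frac{(a-b)(c-d)}{4},
\end{equation*}
and again expanding and collecting leaves $(2ac+2bd)/4=(ac+bd)/2=A_h(uv)(x)$. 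The factor $h^2/4$ is precisely what is needed to cancel the $1/h^2$ coming from $D_hu\,D_hv$ and to flip the sign on the cross terms so they cancel rather than survive.

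Since $x$ was arbitrary in $\mathcal{M}^*$, both identities hold pointwise on $\mathcal{M}^*$, finishing the proof. There is no substantive obstacle here; the only thing to be careful about is that the translation operators applied to a product yield the product of translations (which is immediate from $\taut_\pm f(x)=f(x\pm h/2)$), and that both sides are indeed well-defined functions in $C(\mathcal{M}^*)$, which follows from $u,v\in C(\overline{\mathcal{M}})$ together with $\taut_\pm:C(\overline{\mathcal{M}})\to C(\mathcal{M}^*)$.
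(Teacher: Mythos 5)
Your computation is correct: the paper does not prove this lemma itself but cites it from Boyer--Hubert--Le Rousseau, where it is established by exactly this kind of direct pointwise expansion of $\taut_\pm$, $A_h$, and $D_h$. Both algebraic verifications check out (the cross terms cancel as you describe), so your proposal is a complete and faithful substitute for the cited argument.
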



Now, the following result represent a space discrete integration by parts formula for the difference and average operators.
\begin{proposition}[{\cite[Proposition 2.4]{LOP-2020}}]\label{pro:integral:space} 
Let $u,v$ be two functions in $C(\overline{\mathcal{M}})$, then the following identities hold for the difference and average operators, respectively
$$\displaystyle \int_{\mathcal{M}} u D_{h} v=-\int_{\mathcal{M}^{\ast}} vD_{h} u +\int_{\partial \mathcal{M}} u\,t_{r}(v)n$$
and
$$\displaystyle \int_{\mathcal{M}} u A_{h} v=\int_{\mathcal{M}^*} vA_{h} u-\frac{h}{2}\int_{\partial \mathcal{M}} u\,t_{r}(v).$$
\end{proposition}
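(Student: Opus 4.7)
Both identities are discrete summation-by-parts formulas, and my proof plan is to unfold the definitions of $\int_{\mathcal{M}}$, $D_h$, $A_h$ and track the resulting boundary contributions through a single re-indexing step.

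For the difference identity, I would start from
\begin{align*}
\int_{\mathcal{M}} u\, D_h v \;=\; \sum_{x \in \mathcal{M}} u(x)\bigl[v(\taut_+ x) - v(\taut_- x)\bigr],
\end{align*}
split the right-hand side in two, and reindex by $y = \taut_+ x$ in the first piece and $y = \taut_- x$ in the second. This rewrites the expression as
\begin{align*}
\sum_{y \in \taut_+(\mathcal{M})} \taut_- u(y)\, v(y) \;-\; \sum_{y \in \taut_-(\mathcal{M})} \taut_+ u(y)\, v(y).
\end{align*}
Since $\mathcal{M}^* = \taut_+(\mathcal{M}) \cup \taut_-(\mathcal{M})$ and each translated index set misses exactly one extreme point of $\mathcal{M}^*$, I would extend both sums to range over all of $\mathcal{M}^*$ and pay for the extension by subtracting two residual terms at those extreme points. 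The extended sums combine, by the very definition of $D_h$, into $-h \sum_{y\in\mathcal{M}^*} D_h u(y)\, v(y) = -\int_{\mathcal{M}^*} v\, D_h u$, while the residual contributions are $u(1)\, v(1 - h/2) - u(0)\, v(h/2)$; recognising $v(1 - h/2) = t_r(v)(1)$ with $n(1) = 1$ and $v(h/2) = t_r(v)(0)$ with $n(0) = -1$, this combination is precisely $\int_{\partial \mathcal{M}} u\, t_r(v)\, n$.

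For the average identity I would run the same re-indexing, only with the two sums added rather than subtracted. The interior now collapses via $\taut_+ u + \taut_- u = 2 A_h u$ into $2 \sum_{y \in \mathcal{M}^*} A_h u(y)\, v(y)$, and the overall prefactor $h/2$ arising from $A_h v$ produces exactly $\int_{\mathcal{M}^*} v\, A_h u$. The residual terms at the extreme points of $\mathcal{M}^*$, with the same prefactor $h/2$, yield $-\tfrac{h}{2}\bigl(u(0)\, v(h/2) + u(1)\, v(1 - h/2)\bigr) = -\tfrac{h}{2}\int_{\partial \mathcal{M}} u\, t_r(v)$; no factor of $n$ appears here because the two boundary contributions enter with the same sign, $A_h$ being symmetric in $\taut_\pm$.

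The overall calculation is mechanical once the re-indexing is in place. The only step requiring real attention is the identification of the two residual contributions at the extreme points of $\mathcal{M}^*$ with a boundary integral on $\partial \mathcal{M}$ involving $t_r$ (and, for the difference formula, the outward normal $n$), which is where the dual-grid conventions and the definitions of $n$ and $t_r$ must be applied correctly. I expect that bookkeeping to be the only genuine obstacle; once it is verified, both identities follow at once.
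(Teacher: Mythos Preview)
Your proof is correct. The paper does not actually prove this proposition here; it is quoted from \cite{LOP-2020} without argument, so there is no in-paper proof to compare against. Your direct summation-by-parts computation---reindex each of the two translated sums, extend both to $\mathcal{M}^*$, and identify the two leftover terms at the extreme points $h/2$ and $1-h/2$ with $\int_{\partial\mathcal{M}} u\,t_r(v)\,n$ (respectively $-\tfrac{h}{2}\int_{\partial\mathcal{M}} u\,t_r(v)$)---is exactly the standard elementary derivation one would expect, and your handling of the signs and of the trace/normal conventions at $x=0$ and $x=1$ is accurate.
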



\subsection{Time discrete calculus formulas}
In what follows, we present some identities that will be useful in our main results. For $f$ and $g$ continuously defined over $\mathbb{R}$, the following identity holds
\begin{equation}
    D_{t}(fg)=\taut^{+}fD_{t}g+D_{t}f\taut^{-}g.
    \end{equation}
Indeed, using the definition of $D_{t}$  and adding the term $\frac{1}{\Delta t}\taut^{-}f\taut^{+}g$ we write
\begin{equation}
    D_{t}(fg)=\frac{1}{\Delta t}\left(\taut^{+}f\taut^{+}g-\taut^{-}f \taut^{+}g\right)+\frac{1}{\Delta t}\left(\taut^{-}f \taut^{+}g-\taut^{-}f\taut^{-}g\right).
\end{equation}
Therefore, the above expression can be rewritten as
\begin{equation}\label{eq:time:product:rule}
    D_{t}(fg)=D_{t}f\taut^{+}g+D_{t}g\taut^{-}f.
\end{equation}
Similarly, it follows that 
\begin{equation}
    D_{t}(fg)=D_{t}f\taut^{-}q
+D_{t}g\taut^{+}f.
\end{equation}
Moreover, considering $f=g$ we obtain
\begin{equation}\label{eq:identity:square}
    \begin{split}
       \taut^{+}fD_{t}f&=\frac{1}{2}D_{t}(f^{2})+\frac{1}{2}\Delta t|D_{t}f|^{2}\\
        \taut^{-}fD_{t}f&=\frac{1}{2}D_{t}(f^{2})-\frac{1}{2}\Delta t|D_{t}f|^{2}.
    \end{split}
\end{equation}
The first identity from \eqref{eq:identity:square} follows combining $D_{t}(f^{2})=\taut^{+}fD_{t}f+D_{t}f\taut^{-}f$ and $\Delta t|D_{t}f|^{2}=(\taut^{+}f-\taut^{-}f)D_{t}f$. We analogously proceed to prove the second identity of \eqref{eq:identity:square}.\\
Let us finally comment that we shall use the above identities sampled on the primal and dual time-discrete meshes. Now, we present two integration by parts formulas for the time-discrete operator that we will apply on the next sections.
\begin{proposition}
    \label{properties:time:discret:ops}
    Let us consider $f\in C(\overline{\mathcal{N}})$ and $g\in C(\overline{\mathcal{N}}^{\ast})$. Then, the following identities hold
    \begin{equation}\label{eq:tau:menos} \int_{\mathcal{N}} f\,\taut^-(g)=\int_{\mathcal{N}^{*}}\taut^{+}(f)\, g.
    \end{equation}
    \begin{equation}\label{eq:tau:mas}     \int_{\mathcal{N}}f\,D_{t}g=-\int_{\mathcal{N}^{\ast}}g\,D_{t}f+\int_{\partial \mathcal{N}}f\,\taut^{+}g\,n,
    \end{equation}
    where $\partial \mathcal{N}:=\{0,T\}$.
\end{proposition}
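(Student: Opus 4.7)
The plan is to prove both identities by direct computation: unpack the definitions of the discrete integral, the translation operators $\taut^{\pm}$, and the discrete derivative $D_{t}$, then recognise that the expressions on the two sides of each identity agree after a simple re-indexing of the summation variable. Conceptually, \eqref{eq:tau:menos} is the discrete analogue of the change of variable $\int f(t)g(t-a)\,dt=\int f(t+a)g(t)\,dt$, while \eqref{eq:tau:mas} is the discrete integration-by-parts formula in time.

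For \eqref{eq:tau:menos} I would just write out both sides as finite sums. Unwinding the definitions gives
\begin{equation*}
\int_{\mathcal{N}} f\,\taut^{-}(g)=\Delta t\sum_{j=1}^{N}f(j\Delta t)\,g\bigl((j-\tfrac{1}{2})\Delta t\bigr)=\int_{\mathcal{N}^{*}}\taut^{+}(f)\,g,
\end{equation*}
since the meshes $\mathcal{N}$ and $\mathcal{N}^{*}$ differ precisely by the $\Delta t/2$ shift that $\taut^{\pm}$ encodes. No genuine index manipulation is required.

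Identity \eqref{eq:tau:mas} is the only step that carries some content. After writing $\int_{\mathcal{N}}f\,D_{t}g$ as the telescoping-type sum $\sum_{j=1}^{N}f(j\Delta t)\bigl[g((j+\tfrac{1}{2})\Delta t)-g((j-\tfrac{1}{2})\Delta t)\bigr]$, I would perform a discrete Abel summation: shift $j\mapsto j-1$ in the piece involving $g((j+\tfrac{1}{2})\Delta t)$, so that both contributions are indexed by the same half-integer grid, and then collect the common factor $g((j-\tfrac{1}{2})\Delta t)$. What remains inside the bracket is $-(f(j\Delta t)-f((j-1)\Delta t))=-\Delta t\,D_{t}f((j-\tfrac{1}{2})\Delta t)$, reproducing $-\int_{\mathcal{N}^{*}}g\,D_{t}f$. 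The two terms left uncompensated by the shift are the extreme ones: $f(N\Delta t)g((N+\tfrac{1}{2})\Delta t)$ surviving at the top and $-f(0)g(\tfrac{1}{2}\Delta t)$ surviving at the bottom, which combined with the definition of the outward normal $n(T)=1$, $n(0)=-1$ are precisely $\int_{\partial\mathcal{N}}f\,\taut^{+}g\,n$.

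There is no real obstacle: the computation is routine once the index bookkeeping is done carefully. The only point requiring attention is ensuring that the endpoint values invoked by the shift are well defined. At $j=1$ the shifted term involves $f(0)$, available because $f\in C(\overline{\mathcal{N}})$; at $j=N$ the original term involves $g(T+\Delta t/2)$, available because $g\in C(\overline{\mathcal{N}}^{*})$. This is exactly why the statement is formulated on the closures $\overline{\mathcal{N}}$ and $\overline{\mathcal{N}}^{*}$ rather than on the interior meshes. Putting the pieces together gives \eqref{eq:tau:mas} and concludes the proof.
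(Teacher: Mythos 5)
Your proposal is correct and follows essentially the same route as the paper: both proofs unwind the discrete integrals as finite sums and re-index, with the first identity being a pure half-step shift and the second a summation by parts whose uncompensated endpoint terms produce $\int_{\partial\mathcal{N}}f\,\taut^{+}g\,n$. The paper merely packages the second computation as the difference of two shift identities (one for $\taut^{-}g$, one for $\taut^{+}g$) rather than as a single Abel summation, which is a cosmetic difference.
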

\begin{proof}
The first integration by parts follows directly from the definition of the time-discrete integral. Indeed, we have
\begin{equation*}
    \int_{\mathcal{N}}f\taut^{-}g:=\Delta t\sum_{t\in\mathcal{N}}f(t)g(t-\Delta t/2),
\end{equation*}
and noting that $\taut^{-}\mathcal{N}=\mathcal{N}^{\ast}$ we obtain
\begin{equation}\label{eq:taumenos:integral}
    \int_{\mathcal{N}}f\taut^{-}g=\Delta t\sum_{t\in\mathcal{N}^{\ast}}\taut^{+}f\,g:=\int_{\mathcal{N}^{\ast}}\taut^{+}f\,g,
\end{equation}
which is the desired result. For the second identity we note that
\begin{equation}\label{eq:taumas:integral}
\int_{\mathcal{N}}f\,\taut^{+}g=\int_{\mathcal{N}^{\ast}}\taut^{-}f\,g+\Delta t\int_{\partial \mathcal{N}}    f\,\taut^{+}g\,n,
\end{equation}
where we have used the notation $\partial\mathcal{N}:=\{0,T\}$. Thus, combining \eqref{eq:taumenos:integral} and \eqref{eq:taumas:integral} yields
\begin{equation}\label{eq:time:difference:integral}
    \int_{\mathcal{N}}f\,D_{t}g=-\int_{\mathcal{N}^{\ast}}g\,D_{t}f+\int_{\partial \mathcal{N}}f\,\taut^{+}g\,n,
\end{equation}
which conclude the proof.
\end{proof}
Similar to \cite{GC-HS-2021}, let us point out some useful identities that will allow us to prove our main result. Considering $f,g\in C(\overline{\mathcal{N}}^{\ast})$, from \eqref{eq:taumenos:integral}, we have
\begin{equation*}
    \int_{\mathcal{N}}\taut^{-}f\,D_{t}g=\int_{\mathcal{N}^{\ast}}fD_{t}\taut^{+}g.
\end{equation*}
Then, applying \eqref{eq:time:difference:integral} it follows that
\begin{equation}\label{eq:integral:time:primal}
    \int_{\mathcal{N}}\taut^{-}f\,D_{t}g=-\int_{\mathcal{N}}D_{t}f\,\taut^{+}g+\int_{\partial \mathcal{N}}\taut^{+}(fg)n.
\end{equation}
In the case $f,g\in C(\overline{\mathcal{N}})$, similar steps prove that
\begin{equation}\label{eq:integral:time:dual}
    \int_{\mathcal{N}^{\ast}}\taut^{+}f\,D_{t}g=-\int_{\mathcal{N}^{\ast}}\taut^{-}g\,D_{t}f+\int_{\partial \mathcal{N}}f\,g\,n.
\end{equation}
The last two formulas cannot be considered changing their respective operator $\taut^{\pm}$ since the functions need to be defined in a bigger set.

We end this section proving the stability of the considered discrete scheme by using the previous discrete calculus formulae. For a discrete function $y\in C(\overline{\mathcal{M}}\times \overline{\mathcal{N}})$, we consider the following discrete system 
\begin{equation}\label{discrete:system:well}
    \begin{cases}
    D_{t} y(x,t)-D_{h}^2 \taut^{+}y(x,t)+\taut^{+}(b y)(x,t)=\taut^{+}f(x,t),& \forall (x,t)\in \mathcal{M}\times \mathcal{N}^\ast,\\
    D_{t} y(0,t)-D_{h} \taut^{+}_{+}y(0,t)+\taut^{+}( b_{\Gamma_0}y)(0,t)=0,& \forall t\in \mathcal{N}^\ast,\\
    D_{t} y(1,t)+D_{h} \taut^{+}_{-}y(1,t)+\taut^{+}(b_{\Gamma_1}y)(1,t)=0,&\forall t\in \mathcal{N}^\ast,\\
    y(x,0)=g(x),&\forall x\in \overline{\mathcal{M}}.
    \end{cases}
\end{equation}
Here, $g\in C(\overline{\mathcal{M}})$ and $f\in C(\overline{\mathcal{M}}\times \mathcal{N})$ stand for a given initial data and forcing term for the above system, respectively. Mimicking the continuous methodology, our proof is based on the following discrete Gronwall inequality.

\begin{lemma}(Discrete Gronwall inequality)
Let $\eta\in C(\overline{\mathcal{N}})$ a nonnegative function which satisfies for all $t\in\mathcal{N}^{\ast}$ the difference inequality
\begin{equation}\label{ine:gronwall:hyp}
    D_{t}\eta(t)\leq \gamma\taut^{+}\eta(t)+\taut^{+}g(t),
\end{equation}
where $\gamma>0$ and $g\in C(\overline{\mathcal{N}})$ is nonnegative. Then, for all $t\in\mathcal{N}$ holds
\begin{equation}\label{ine:gronwall}
    \eta(t)\leq e^{\gamma T}\eta(0)+e^{\gamma T}\int_{\mathcal{N}}g,
\end{equation}
provided that $\gamma\Delta t < 1/2$. In particular, if $D_{t}\eta\leq \gamma\taut^{+}\eta$ in $\mathcal{N}^{\ast}$ and $\eta(0)=0$ then $\eta(t)=0$ for all $t\in\mathcal{N}$
\end{lemma}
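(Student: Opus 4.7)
The plan is to reduce the difference inequality to a one-step recurrence in the nodal values and iterate. Set $\eta^{j} := \eta(j\Delta t)$ and $g^{j} := g(j\Delta t)$ for $j \in \llbracket 0, N \rrbracket$, and evaluate \eqref{ine:gronwall:hyp} at $t = (j-1/2)\Delta t \in \mathcal{N}^{\ast}$. Using the definitions recalled in Section \ref{sec:time:discretization}, namely $\taut^{+}\eta((j-1/2)\Delta t) = \eta^{j}$, $\taut^{-}\eta((j-1/2)\Delta t) = \eta^{j-1}$, $D_{t}\eta((j-1/2)\Delta t) = (\eta^{j}-\eta^{j-1})/\Delta t$, and similarly for $g$, the hypothesis becomes the backward-Euler-type recurrence
\begin{equation*}
(1 - \gamma \Delta t)\,\eta^{j} \;\leq\; \eta^{j-1} + \Delta t\, g^{j}, \qquad j = 1,\ldots,N.
\end{equation*}

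The assumption $\gamma \Delta t < 1/2$ guarantees that $1 - \gamma \Delta t > 1/2 > 0$, so I may divide and iterate from $j=0$ to get
\begin{equation*}
\eta^{j} \;\leq\; (1-\gamma \Delta t)^{-j}\,\eta^{0} \;+\; \Delta t \sum_{k=1}^{j}(1-\gamma \Delta t)^{-(j-k+1)}\, g^{k}.
\end{equation*}
The next step is to convert the geometric factors into an exponential through the elementary estimate $-\ln(1-x) \leq 2x$ for $x\in [0,1/2]$, equivalently $(1-x)^{-1} \leq e^{2x}$, applied with $x = \gamma \Delta t$. Each amplification factor is then bounded by $e^{2\gamma j \Delta t} \leq e^{2\gamma T}$, and recognising $\Delta t\sum_{k=1}^{N} g^{k} = \int_{\mathcal{N}} g$ in the nonnegative tail sum yields an estimate of the form \eqref{ine:gronwall} with a constant of type $e^{C\gamma T}$. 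The sharper form $e^{\gamma T}$ stated in the lemma is recovered in the regime $\gamma \Delta t \to 0$ (where $(1-\gamma\Delta t)^{-j} \to e^{\gamma T}$) or by a slight tightening of the logarithmic estimate; this is the only point where some care is needed, but it is not a genuine obstacle.

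For the final ``in particular'' assertion, when $g \equiv 0$ and $\eta(0)=0$ the recurrence reduces to $\eta^{j} \leq (1-\gamma\Delta t)^{-1}\eta^{j-1}$. Since $\eta$ is nonnegative and $(1-\gamma \Delta t)^{-1}>0$, a one-line induction on $j$ delivers $\eta^{j}=0$ for every $j \in \llbracket 0,N\rrbracket$, hence $\eta \equiv 0$ on $\overline{\mathcal{N}}$. Overall the proof is routine discrete ODE bookkeeping; the main (minor) subtlety is cleanly matching the exponential constant $e^{\gamma T}$ under the smallness hypothesis $\gamma\Delta t < 1/2$.
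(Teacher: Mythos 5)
Your proof is correct and follows essentially the same route as the paper: the paper's discrete integrating factor $a^{\gamma t}=(1-\gamma\Delta t)^{t/\Delta t}$, once ``integrated'' over $\mathcal{N}_{t}^{\ast}$, is exactly the explicit iteration of the one-step recurrence $(1-\gamma\Delta t)\,\eta^{j}\leq \eta^{j-1}+\Delta t\,g^{j}$ that you carry out. Your remark about the constant is also accurate: the paper's own argument invokes $(1-\gamma\Delta t)^{-1}<e^{2\gamma\Delta t}$ and therefore likewise only yields $e^{2\gamma T}$, so the $e^{\gamma T}$ in the statement is an overstatement of the paper (harmless for its later use) rather than a gap in your argument.
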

\begin{proof}
 Let us consider $a:=(1-\gamma \Delta t)^{1/\gamma\Delta t}$. We note that $D_{t}a^{\gamma t}=-\gamma \taut^{-}a^{\gamma t}$. Then, thanks to \eqref{eq:time:product:rule} and \eqref{ine:gronwall:hyp} it follows that
\begin{equation*}
\begin{split}
    D_{t}(\eta(t)a^{\gamma t} )=&D_{t}\eta(t)\taut^{-}a^{\gamma t}-\gamma \taut^{-}a^{\gamma t}\taut^{+}\eta(t)\leq  \taut^{+}g(t)\taut^{-}a^{\gamma t}.
    \end{split}
\end{equation*}
Denoting by $\mathcal{N}_t=(0,t\Delta t/2)\cap \mathcal{N}$ and integrating over $\mathcal{N}^{\ast}_{t}$ we obtain
\begin{equation}
    \begin{split}
    \eta(t)\leq & a^{-\gamma t}\eta(0)+a^{-\gamma t}\int_{\mathcal{N}^{\ast}_{t}}\taut^{+}g\taut^{-}a^{\gamma t},
    \end{split}
\end{equation}
which implies 
\begin{equation}
    \eta(t)\leq e^{\gamma T}\eta(0)+e^{\gamma T}\int_{\mathcal{N}}g,
\end{equation}
where we have used that $\frac{1}{1-\gamma \Delta t }<e^{2\gamma \Delta t}$ provided that $\gamma\Delta t<1/2$.
\end{proof}
To prove the stability of our system, we multiply by $\taut^{+}y$ the main equation of the system \eqref{discrete:system:well} and integrating over $\mathcal{M}$ to obtain
\begin{equation}\label{eq:discrete:energy}
    \int_{\mathcal{M}}D_{t}y\taut^{+}y-\int_{\mathcal{M}}D_{h}^{2}\taut^{+}y\taut^{+}y+\int_{\mathcal{M}}\taut^{+}(b)|\taut^{+}y|^{2}=\int_{\mathcal{M}}\taut^{+}f\taut^{+}y.
\end{equation}
We note that by virtue of \eqref{eq:identity:square} it follows that the first integral of the left hand-side above can be rewritten as
\begin{equation*}
    \int_{\mathcal{M}}D_{t}y\taut^{+}y=\frac{1}{2}\int_{\mathcal{M}}D_{t}(|y|^{2})+\frac{\Delta t}{2}\int_{\mathcal{M}}|D_{t}y|^{2}.
\end{equation*}
Now, applying Proposition \ref{pro:integral:space} on the second integral from the left hand-side of \eqref{eq:discrete:energy} and using the the boundary condition of the system \eqref{discrete:system:well} yield
\begin{equation*}
    \begin{split}
    \int_{\mathcal{M}}D_{h}^{2}\taut^{+}y\taut^{+}y
    =&-\int_{\mathcal{M}^{\ast}}|D_{h}\taut^{+}y|^{2}-\int_{\partial\mathcal{M}}\taut^{+}y\,(D_{t}y+\taut^{+}(by)).
    \end{split}
\end{equation*}
Moreover, proceeding as before for the time discrete variable, we obtain
\begin{equation}\label{eq:D2:well}
    \begin{split}
    \int_{\mathcal{M}}D_{h}^{2}\taut^{+}y\taut^{+}y=&-\int_{\mathcal{M}^{\ast}}|D_{h}\taut^{+}y|^{2}-\int_{\partial\mathcal{M}}\taut^{+}(b|y|^{2})-\frac{1}{2}\int_{\partial\mathcal{M}}D_{t}(|y|^{2})-\frac{\Delta t}{2}\int_{\partial\mathcal{M}}|D_{t}y|^{2}.
    \end{split}
\end{equation}
Thus, combing the above identities on \eqref{eq:discrete:energy}, we have that
\begin{equation}\label{eq:energy:dis}
    \begin{split}
    \int_{\mathcal{M}}\taut^{+}f\taut^{+}y\geq &\frac{1}{2}D_{t}\left(\left\| y(t)\right\|^{2}_{\mathbb{L}_{h}^{2}(\mathcal{M})} \right)+\frac{\Delta t}{2}\left\| D_{t}y(t)\right\|^{2}_{\mathbb{L}_{h}^{2}(\mathcal{M})}\\
    &+\int_{\mathcal{M}^{\ast}}|D_{h}\taut^{+}y|^{2}-\frac{1}{2}\left\|b \right\|_{\mathbb{L}^{\infty}_{h}(\mathcal{M})}\left\| \taut^{+}y(t)\right\|^{2}_{\mathbb{L}_{h}^{2}(\mathcal{M})}.
    \end{split}
\end{equation}
Now from the Cauchy-Schwartz and Young inequalities we observe
\begin{equation*}
\begin{split}
    \int_{\mathcal{M}}\taut^{+}(fy)\leq & \left\|\taut^{+}f \right\|_{L_{h}^{2}(\mathcal{M})}\left\|\taut^{+}y \right\|_{L_{h}^{2}(\mathcal{M})}\leq \frac{1}{2}\left\|\taut^{+}f \right\|^{2}_{L_{h}^{2}(\mathcal{M})}+\frac{1}{2}\left\|\taut^{+}y \right\|^{2}_{L_{h}^{2}(\mathcal{M})}.
    \end{split}
\end{equation*}
We utilize the inequality above in the left hand-side of \eqref{eq:energy:dis} to obtain
\begin{equation*}
    \begin{split}
    \frac{1}{2}\left\|\taut^{+}f \right\|^{2}_{L_{h}^{2}(\mathcal{M})}+\left(\frac{1}{2}+\frac{1}{2}\left\|b \right\|_{\mathbb{L}^{\infty}_{h}(\mathcal{M})}\right)\left\|\taut^{+}y \right\|^{2}_{\mathbb{L}_{h}^{2}(\mathcal{M})}\geq &D_{t}\left(\left\| y\right\|^{2}_{\mathbb{L}_{h}^{2}(\mathcal{M})} \right),
    \end{split}
\end{equation*}
where we have dropped the integral with the terms $D_{t}y$ and $D_{h}\taut^{+}y$. Thus, applying the discrete Gronwall inequality \eqref{ine:gronwall} we get for all $t\in \mathcal{N}$
\begin{equation}\label{ine:discrete:stability}
    \left\| y\right\|^{2}_{\mathbb{L}^{2}_{h}(\mathcal{M})}\leq C\left(\left\| g\right\|^{2}_{\mathbb{L}^{2}_{h}(\mathcal{M})}+\left\|f \right\|^{2}_{L_{h}^{2}(\mathcal{M}\times\mathcal{N})}\right),
\end{equation}
provided that $\max\{\Delta t,\Delta t\left\|b \right\|_{\mathbb{L}^{\infty}_{h}(\mathcal{M})}\}<1/2$ with $C:=\exp({T+T\left\| b\right\|_{\mathbb{L}^{\infty}_{h}(\mathcal{M})}}).$
On the other hand, integrating over $\mathcal{N}^{\ast}$ in inequality 

\subsection{Discrete calculus results on the weight Carleman function}\label{estimaciones}
In this section, we present the result related to discrete (time and space) operations performed on the Carleman weight functions used to establish the Discrete Carleman estimate \eqref{eq:discrete:carleman}. Let us recall that our Carleman weight function is of the form $e^{s\varphi}$, for $s\geq 1$, with $\varphi(x)=e^{\lambda\psi(x)}-e^{\lambda K}$, where $K>\left\| \psi\right\|_{C(\overline{\Omega})}$, $\lambda >0$ and we assume that $\psi$ verify \eqref{funcion-peso-1}. For sake of clarity, we set $r(x,t)=e^{s(t)\varphi(x)}$ and $\rho=r^{-1}$. \\
\textbf{Notation:}  We denote by $\mathcal{O}_{\lambda}(sh)$ the functions that verify $\left\| \mathcal{O}_{\lambda}(sh)\right\|_{L^{\infty}_{h}(Q)}\leq C_{\lambda}sh$ for some constant $C_{\lambda}$ depending on $\lambda$ and $C$ denotes a positive constant uniform with respect to $s,h$ and $\lambda$ which may change from line to line. By $\mathcal{O}(1)$ we denote bounded functions and by $\mathcal{O}_{\lambda}(1)$ a bounded function once $\lambda$ is fixed. 

Loosely put, the results of this section state that the space/time discrete operator on the Carleman weight function has two main terms, the classical continuous derivative and an error term. Thus, we firstly present some estimate for the space continuous derivative on the Carleman weight function. 
\begin{lemma}[{\cite[Lemma 3.7]{boyer-2010}}]\label{3.7}
For $\alpha, \beta\in \mathbb{N}$ we have
\begin{equation}\label{13}
\begin{split}
        \partial^{\beta}_{x}(r\partial_{x}^{\alpha} \rho)
        =&\alpha^{\beta}(-s\phi)^{\alpha}\lambda^{\alpha+\beta}(\partial_{x} \psi)^{\alpha+\beta}+\alpha\beta(s\phi)^{\alpha}\lambda^{\alpha+\beta-1}\mathcal{O}(1)+\alpha(\alpha-1)s^{\alpha-1}\mathcal{O}_{\lambda}(1)\\
        =&s^{\alpha}\mathcal{O}_{\lambda}(1).
        \end{split}
\end{equation}
Moreover, let $\sigma\in [0,1]$ and  $\frac{\tau h}{\delta T^{2}}\leq 1$, then
$\partial_{x}^{\beta}(r(\cdot,t)(\partial_{x}^{\alpha}\rho)(\cdot+\sigma h,t))=\mathcal{O}_{\lambda}(s^{\alpha})$. 
\end{lemma}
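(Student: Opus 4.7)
The plan is to treat the statement as essentially a bookkeeping exercise on the Leibniz/Faà di Bruno expansion of $\partial_x^\alpha(e^{-s\varphi})$, with care to track the leading powers in $s$ and $\lambda$ simultaneously.

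First, for the case $\alpha=0$ the identity $r\rho=1$ gives $\partial_x^\beta(r\rho)=\delta_{\beta 0}$, which matches \eqref{13} since every right-hand term carries an $\alpha$-factor. For $\alpha\geq 1$, I would proceed by induction: differentiating $\rho=e^{-s\varphi}$ gives $\partial_x\rho=-s\varphi'\rho$, and iteration produces
$\partial_x^\alpha\rho = P_\alpha(s,\varphi',\ldots,\varphi^{(\alpha)})\,\rho$,
where $P_\alpha$ is a polynomial in $s$ of degree $\alpha$ whose leading coefficient (in $s$) is $(-\varphi')^\alpha$ and whose lower-order-in-$s$ coefficients involve higher derivatives $\varphi^{(k)}$ with $k\geq 2$ (this is exactly Faà di Bruno). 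Multiplying by $r$ cancels $\rho$ completely, so $r\partial_x^\alpha\rho=P_\alpha$ is $\rho$-free.

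Next, I would apply $\partial_x^\beta$ to $P_\alpha$ via Leibniz. Since $\varphi=\phi-e^{\lambda K}$ with $\phi=e^{\lambda\psi}$, one has $\varphi^{(k)}=\lambda^k(\psi')^k\phi+\lambda^{k-1}\phi\,\mathcal{O}(1)$ for each $k\geq 1$. Consequently, the top $s$-degree term $(-s\varphi')^\alpha=(-s)^\alpha\lambda^\alpha\phi^\alpha(\psi')^\alpha+\ldots$ after $\beta$ differentiations produces its maximal $\lambda$-power when every derivative strikes the exponential $\phi^\alpha=e^{\alpha\lambda\psi}$, yielding $\alpha^\beta(-s\phi)^\alpha\lambda^{\alpha+\beta}(\psi')^{\alpha+\beta}$—the first displayed term of \eqref{13}. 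If at least one derivative instead hits a factor of $\psi'$ (requires $\alpha,\beta\geq 1$), one loses exactly one power of $\lambda$ but keeps the full $s^\alpha\phi^\alpha$, giving the term $\alpha\beta(s\phi)^\alpha\lambda^{\alpha+\beta-1}\mathcal{O}(1)$. Finally, the terms in $P_\alpha$ of $s$-degree $<\alpha$ (present only when $\alpha\geq 2$) contribute at most $\alpha(\alpha-1)s^{\alpha-1}\mathcal{O}_\lambda(1)$, giving the last term. Summing and using $|\phi|\leq e^{\lambda K}$ proves the global bound $s^\alpha\mathcal{O}_\lambda(1)$.

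For the shifted version, I would factor
\[
r(x,t)(\partial_x^\alpha\rho)(x+\sigma h,t)
= e^{\,s(t)[\varphi(x)-\varphi(x+\sigma h)]}\,\bigl(r\partial_x^\alpha\rho\bigr)(x+\sigma h,t).
\]
The second factor is $\mathcal{O}_\lambda(s^\alpha)$ at $x+\sigma h$ by the first part, and its $k$-th $x$-derivative is still $\mathcal{O}_\lambda(s^\alpha)$ by the same estimate. For the prefactor, Taylor's theorem with remainder gives $\varphi^{(k)}(x)-\varphi^{(k)}(x+\sigma h)=\mathcal{O}_\lambda(h)$ for every $k\geq 0$, so the exponent and each of its derivatives are $\mathcal{O}_\lambda(sh)$, which is $\mathcal{O}_\lambda(1)$ under the hypothesis $\tau h/(\delta T^2)\leq 1$ (recall $s=\tau\theta$). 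Faà di Bruno then yields $\partial_x^k\bigl[e^{s(\varphi(x)-\varphi(x+\sigma h))}\bigr]=\mathcal{O}_\lambda(1)$ for every $k\leq\beta$. Applying Leibniz once more concludes that $\partial_x^\beta$ of the product is $\mathcal{O}_\lambda(s^\alpha)$.

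The main obstacle is really only bookkeeping: keeping straight the hierarchy in $s$ and in $\lambda$ so that the three displayed terms in \eqref{13} appear cleanly without cross-contamination, and, for the shifted statement, checking that the smallness hypothesis $\tau h/(\delta T^2)\leq 1$ controls not just the exponential prefactor but also all of its $\beta$ derivatives uniformly in $\sigma\in[0,1]$.
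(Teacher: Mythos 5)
Your argument is correct and is essentially the same computation as in the cited source \cite[Lemma 3.7]{boyer-2010}, which this paper invokes without reproducing: the Faà di Bruno/Leibniz bookkeeping on $r\partial_x^\alpha\rho=P_\alpha(s,\varphi',\dots,\varphi^{(\alpha)})$ with $\varphi^{(k)}=\lambda^k(\partial_x\psi)^k\phi+\lambda^{k-1}\phi\,\mathcal{O}(1)$, and for the shifted version the factorization through $e^{s(\varphi(x)-\varphi(x+\sigma h))}$ controlled by $sh\leq \tau h(\delta T^2)^{-1}\leq 1$. The only point worth making explicit is that absorbing the third term into $s^\alpha\mathcal{O}_\lambda(1)$ uses $s\geq 1$, which holds in the regime of the paper.
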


\begin{corollary}[{\cite[Corollary 3.8]{boyer-2010}}]\label{3.8}
Let $\alpha$, $\beta$ and $\delta$ be multi-indices. We have
\begin{equation}
    \begin{split}
    \partial^{\delta}_{x}\left( r^{2}(\partial^{\alpha}\rho)\partial^{\beta}\rho\right)
        =&|\alpha+\beta|^{|\delta|}(-s\varphi)^{|\alpha+\beta|}\lambda^{|\alpha+\beta+\delta|}(\nabla\psi)^{\alpha+\beta+\delta}+|\delta||\alpha+\beta|(s\varphi)^{|\alpha+\beta|}\lambda^{|\alpha+\beta+\delta|-1}\mathcal{O}(1)\\
        &+s^{|\alpha+\beta|-1}\left( |\alpha|(|\alpha|-1)+|\beta|(|\beta|-1)\right)\mathcal{O}_{\lambda}(1)\\
        =&\mathcal{O}_{\lambda}(s^{|\alpha+\beta|}).
        \end{split}
\end{equation}
\end{corollary}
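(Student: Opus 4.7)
The strategy I would follow is to rewrite $r^{2}(\partial^{\alpha}\rho)(\partial^{\beta}\rho)=(r\partial^{\alpha}\rho)(r\partial^{\beta}\rho)$, so that each factor already admits the clean three-term expansion provided by Lemma \ref{3.7}. Applying the Leibniz rule to $\partial^{\delta}_x$ then gives
\[
\partial^{\delta}_x\bigl[(r\partial^{\alpha}\rho)(r\partial^{\beta}\rho)\bigr]=\sum_{\gamma\le \delta}\binom{\delta}{\gamma}\,\partial^{\gamma}_x(r\partial^{\alpha}\rho)\,\partial^{\delta-\gamma}_x(r\partial^{\beta}\rho),
\]
and the task reduces to substituting the expansion of Lemma \ref{3.7} into each factor, multiplying, and regrouping by orders of $s$ and $\lambda$. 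In the one-dimensional setting here $\alpha,\beta,\delta$ should be read as nonnegative integers, with $|\cdot|$ acting as the identity.

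The leading contribution arises from pairing the principal term on each side, which yields
\[
\alpha^{|\gamma|}\beta^{|\delta-\gamma|}(-s\phi)^{|\alpha+\beta|}\lambda^{|\alpha+\beta+\delta|}(\partial_{x}\psi)^{\alpha+\beta+\delta}.
\]
Summing over $\gamma$ against the Leibniz weights and invoking the binomial identity
\[
\sum_{\gamma\le \delta}\binom{\delta}{\gamma}\alpha^{|\gamma|}\beta^{|\delta-\gamma|}=(\alpha+\beta)^{|\delta|}=|\alpha+\beta|^{|\delta|}
\]
recovers the stated dominant term $|\alpha+\beta|^{|\delta|}(-s\varphi)^{|\alpha+\beta|}\lambda^{|\alpha+\beta+\delta|}(\nabla\psi)^{\alpha+\beta+\delta}$.

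The first error term is produced when one factor contributes its principal part while the other contributes its middle correction of order $(s\phi)^{\cdot}\lambda^{\cdot-1}\mathcal{O}(1)$; collecting all such pairings yields a total of order $(s\phi)^{|\alpha+\beta|}\lambda^{|\alpha+\beta+\delta|-1}\mathcal{O}(1)$, with the combinatorial prefactor rearranging to $|\delta||\alpha+\beta|$ after standard binomial manipulations. Any pairing that involves the third term from Lemma \ref{3.7} carries the scalar $\alpha(\alpha-1)$ or $\beta(\beta-1)$ together with a factor $s^{|\alpha|-1}$ or $s^{|\beta|-1}$, and these assemble into the final error term $s^{|\alpha+\beta|-1}\bigl(|\alpha|(|\alpha|-1)+|\beta|(|\beta|-1)\bigr)\mathcal{O}_{\lambda}(1)$. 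The concluding bound $\mathcal{O}_{\lambda}(s^{|\alpha+\beta|})$ is then immediate since $\lambda$ is treated as fixed and every contribution is at most of that order in $s$.

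The main obstacle is essentially combinatorial bookkeeping: one must verify that the middle contributions from the Leibniz expansion collapse exactly into the single coefficient $|\delta||\alpha+\beta|$, and that cross products between two subleading terms do not produce new orders but are already absorbed by the two explicit error groups. Once this check is carried out, the rest is purely mechanical, rehearsing Lemma \ref{3.7} and the Leibniz rule.
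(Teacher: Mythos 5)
Your proof is correct and is essentially the argument behind the cited result: factor $r^{2}(\partial^{\alpha}\rho)(\partial^{\beta}\rho)=(r\partial^{\alpha}\rho)(r\partial^{\beta}\rho)$, apply the Leibniz rule, and insert the three-term expansion of Lemma \ref{3.7} into each factor, with the $\mathcal{O}(1)$ and $\mathcal{O}_{\lambda}(1)$ symbols absorbing the cross products of subleading terms; the present paper imports the corollary from \cite{boyer-2010} without proof, and that reference argues in exactly this way. The only mismatch --- your derivation produces $(-s\phi)^{|\alpha+\beta|}$ in the leading term while the statement displays $(-s\varphi)^{|\alpha+\beta|}$ --- is a typographical inconsistency in the transcription of the corollary (compare the main term of Lemma \ref{3.7}), not a gap in your argument.
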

Now we have established the estimate for the continuous derivative, we can present the following results to estimate discrete space operations performed on the Carleman weight function. First, we note that the proof presented in \cite{LOP-2020} is given for a time-independent Carleman weight function, yet the estimates hold as mentioned in that work for the time-dependent case replacing the condition  $sh\leq 1$ by $\tau h\max_{[0,T]}\theta(t)\leq 1$, where $s(t):=\tau\theta(t)$. For this reason, we need the condition $\tau h(\delta T^{2})^{-1}\leq 1$ due to $\max_{t\in[0,T]}\theta(t)\leq (\delta T^{2})^{-1}$.

\begin{proposition}[{\cite[Proposition 4.5]{LOP-2020}}]\label{pro:space:estimate}
Let $\alpha,n,m\in\mathbb{N}$. Provided $\tau h(\delta T^{2})^{-1}\leq 1$, we have
\begin{equation}
        rA^{m}_{x}D^{n}_{h}\partial_{x}^{\alpha}\rho= r\partial_{x}^{n}\partial^{\alpha}_{x}\rho+s^{|\alpha|+n}\mathcal{O}_{\lambda}((sh)^{2})=s^{|\alpha|+n}\mathcal{O}_{\lambda}(1).
\end{equation}
\end{proposition}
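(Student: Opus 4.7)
The plan is to Taylor-expand the discrete operators $A_h$ and $D_h$ about the continuous derivative, control the remainders with Lemma \ref{3.7}, and then iterate. Concretely, by the integral form of Taylor's theorem applied at the shifted points $x\pm h/2$, for any smooth $u$ one has expansions of the shape
\begin{equation*}
A_h u(x)=u(x)+h^{2}R_{A}[u](x),\qquad D_h u(x)=\partial_x u(x)+h^{2}R_{D}[u](x),
\end{equation*}
where each remainder $R_{A}[u]$, $R_D[u]$ is an integral average of a second-order (resp. third-order) derivative of $u$ evaluated at a convex combination $x+\sigma h$ with $\sigma\in[-1/2,1/2]$. Thus each application of $A_h$ or $D_h$ either reproduces the continuous operation $\mathrm{id}$ or $\partial_x$ exactly, or produces a correction of size $h^2$ multiplied by a higher-order derivative sampled on the translated grid.

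Next I would iterate these identities on the composition $A_h^m D_h^n \partial_x^{\alpha}\rho$. Expanding term by term, the product breaks into a single ``principal'' contribution $\partial_x^{n+\alpha}\rho$ plus a finite sum of remainders of the generic form
\begin{equation*}
h^{2k}\, \bigl(\partial_x^{n+\alpha+2k}\rho\bigr)(x+\sigma h,t),\qquad k=1,\ldots,m+n,\ \sigma\in[-C,C],
\end{equation*}
with coefficients that are bounded independently of $h$, $s$, and $\lambda$. Multiplying the whole expansion by $r(x,t)$, I would invoke the shifted version of Lemma \ref{3.7} (valid precisely because $\tau h(\delta T^2)^{-1}\le 1$) to conclude that each shifted higher-order derivative obeys
\begin{equation*}
r(x,t)\bigl(\partial_x^{n+\alpha+2k}\rho\bigr)(x+\sigma h,t)=s^{n+\alpha+2k}\mathcal{O}_\lambda(1).
\end{equation*}

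Combining these bounds, the $k$-th remainder becomes $h^{2k}s^{n+\alpha+2k}\mathcal{O}_\lambda(1)=s^{n+\alpha}(sh)^{2k}\mathcal{O}_\lambda(1)$. Since $sh=\tau\theta(t)h\le \tau h(\delta T^{2})^{-1}\le 1$, the geometric series in $(sh)^2$ converges and the dominant contribution among all remainders is the $k=1$ term, giving
\begin{equation*}
rA_h^m D_h^n \partial_x^{\alpha}\rho=r\partial_x^{n+\alpha}\rho+s^{n+\alpha}\mathcal{O}_\lambda((sh)^2),
\end{equation*}
which is the first identity. The second then follows because Lemma \ref{3.7} also yields $r\partial_x^{n+\alpha}\rho=s^{n+\alpha}\mathcal{O}_\lambda(1)$, while $s^{n+\alpha}(sh)^2=s^{n+\alpha}\mathcal{O}_\lambda(1)$ under the same smallness hypothesis.

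The main bookkeeping obstacle is keeping track, under iterated compositions of $A_h$ and $D_h$, of (i) which derivatives of $\rho$ appear, (ii) the translation parameter $\sigma h$ at which they are sampled, and (iii) the precise combinatorial coefficients; the saving grace is that we only need qualitative $\mathcal{O}_\lambda$-control, so once one verifies that every term fits the template $h^{2k}(\partial_x^{n+\alpha+2k}\rho)(\cdot+\sigma h)$ with $\sigma$ in a bounded set, the shifted estimate in Lemma \ref{3.7} closes the argument uniformly.
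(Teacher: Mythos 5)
Your proposal is correct and follows essentially the same route the paper (and the cited reference \cite{LOP-2020}) uses: the exact Taylor decomposition $A_h^m D_h^n f=f^{(n)}+R_{A_h^m}(f^{(n)})+R_{D_h^n}(f)+R_{A_h^m D_h^n}(f)$ recorded in Corollary \ref{coro:AD}, applied to $f=\partial_x^{\alpha}\rho$, followed by the shifted estimate of Lemma \ref{3.7} to bound each $h^{2k}$-remainder by $s^{n+\alpha}(sh)^{2k}\mathcal{O}_\lambda(1)$ under $\tau h(\delta T^2)^{-1}\le 1$. The only cosmetic difference is that you iterate the one-step expansions and sum a (finite) series in $(sh)^2$, whereas the paper packages the whole composition into three explicit integral remainders at once.
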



\begin{lemma}[{\cite[Lemma 4.8]{LOP-2020}}]\label{lem:space:estimate}
 For $\alpha,j,k,m,n\in\mathbb{N}$ and for $\tau h(\delta T^{2})^{-1}\leq 1$, we have
\begin{equation}
        A^{j}_{h}D^{k}_{h}\partial^{\alpha}_{x}\left( rA^{m}_{h}D^{n}_{h}\rho\right)=\partial_{x}^{k}\partial^{\alpha}_{x}(r\partial_{x}^{n}\rho)+s^{n}\mathcal{O}_{\lambda}\left((sh)^{2}\right)=s^{n}\mathcal{O}_{\lambda}(1).
\end{equation}
\end{lemma}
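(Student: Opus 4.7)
The overall plan is to reduce the fully-discrete quantity $A^j_h D^k_h \partial^\alpha_x(rA^m_h D^n_h \rho)$ to its purely continuous analogue by peeling off the discrete operators one layer at a time. The guiding principle is that each of $A_h$ and $D_h$ agrees with the identity and with $\partial_x$, respectively, up to an $O(h^2)$ correction built from higher-order spatial derivatives, and that derivatives of $r\partial_x^n\rho$ remain of order $s^n$ uniformly in $h$.

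The first move would be to apply Proposition \ref{pro:space:estimate} with $\alpha=0$ to rewrite the innermost block as
\[
rA^m_h D^n_h\rho = r\partial_x^n\rho + s^n \mathcal{O}_\lambda((sh)^2).
\]
Substituting this inside $A^j_h D^k_h \partial^\alpha_x(\cdot)$ splits the claim into a principal contribution $A^j_h D^k_h \partial^\alpha_x(r\partial_x^n\rho)$ and a remainder obtained by applying $A^j_h D^k_h\partial^\alpha_x$ to the $s^n\mathcal{O}_\lambda((sh)^2)$ term.

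For the principal contribution, I would use iterated Taylor expansions of $A_h$ and $D_h$ acting on the smooth function $F := \partial^\alpha_x(r\partial_x^n\rho)$, namely $A_h = I + \tfrac{h^2}{8}\partial_x^2 + O(h^4)$ and $D_h = \partial_x + \tfrac{h^2}{24}\partial_x^3 + O(h^4)$. Composing $j+k$ times yields
\[
A^j_h D^k_h F = \partial_x^k F + h^2\,\mathcal{R}_h(F),
\]
where $\mathcal{R}_h(F)$ is a finite linear combination of higher-order spatial derivatives of $F$. Each such derivative $\partial_x^{p}F = \partial_x^{\alpha+p}(r\partial_x^n\rho)$ is bounded by Corollary \ref{3.8} to be $\mathcal{O}_\lambda(s^n)$ uniformly in $h$. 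Hence the correction is at most $h^2 s^n \mathcal{O}_\lambda(1)$, and since $s\geq 1$ this is $\leq s^n \mathcal{O}_\lambda((sh)^2)$. The remainder contribution is treated identically: the error from Proposition \ref{pro:space:estimate} has an explicit polynomial-in-$(\tau,\varphi,\varphi',\dots)$ structure multiplied by $h^2$, so differentiating and averaging it at most $\alpha+j+k$ times preserves the $s^n \mathcal{O}_\lambda((sh)^2)$ bound by the same bookkeeping. The final equality $s^n \mathcal{O}_\lambda((sh)^2) = s^n \mathcal{O}_\lambda(1)$ is immediate from the standing hypothesis $\tau h(\delta T^2)^{-1}\leq 1$, since $sh = \tau h\,\theta(t) \leq \tau h(\delta T^2)^{-1}\leq 1$.

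The main obstacle, as in \cite{LOP-2020}, is the careful tracking of the powers of $s$ through every intermediate step. At each Taylor layer one must verify that the additional spatial derivatives produced by the $h^2$-remainder do not inflate the $s$-power beyond $s^n$; this hinges on the structural fact that $\partial_x^p(r\partial_x^n\rho)$ stays $\mathcal{O}_\lambda(s^n)$ for every $p\geq 0$, itself a consequence of the specific form $\varphi = e^{\lambda\psi} - e^{\lambda K}$ with $\psi\in C^\infty(\overline\Omega)$. No genuinely new idea beyond this is required, so the argument is essentially a disciplined unfolding of discrete Taylor expansions combined with the uniform-in-$h$ size estimates of Lemma \ref{3.7} and Corollary \ref{3.8}.
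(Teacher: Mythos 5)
Your argument is correct and follows essentially the same route as the cited proof in \cite{LOP-2020} (and as the paper's own derivation of Theorem \ref{prop:time:estimate}): replace the inner block by $r\partial_x^n\rho$ plus an explicit $h^2$-remainder via Proposition \ref{pro:space:estimate}, Taylor-expand the outer $A_h^jD_h^k$ as in Corollary \ref{coro:AD}, and bound every resulting spatial derivative (including at shifted points) by the uniform estimate $\partial_x^{\beta}(r\partial_x^{n}\rho)=s^{n}\mathcal{O}_\lambda(1)$, so that the errors are $h^2s^{n+2}\mathcal{O}_\lambda(1)=s^n\mathcal{O}_\lambda((sh)^2)$ and $sh\le\tau h(\delta T^2)^{-1}\le 1$ closes the estimate. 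The only slip is a mis-citation: the pointwise derivative bound you invoke is Lemma \ref{3.7}, not Corollary \ref{3.8}.
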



\begin{theorem}[\cite{LOP-2020}, Theorem 4.9]\label{teo:space:estimate} Let $\alpha,\beta,j,k,l,m,n,p\in\mathbb{N}$. Provided $\tau h(\delta T^{2})^{-1}\leq 1$, we have
\begin{equation}
\begin{split}
A^{p}_{h}D^{l}_{h}\partial^{\beta}_{x}(r^{2}A^{j}_{h}D^{k}_{h}(\partial^{\alpha}_{x}\rho)A^{m}_{h}D^{n}_{h}(\rho))=&\partial_{x}^{l}\partial^{\beta}_{x}\left( r^{2}\partial^{k}_{x}\partial^{\alpha}_{x}\rho\partial^{n}_{x}\rho\right)+s^{n+k+|\alpha|}\mathcal{O}_{\lambda}((sh)^{2})\\
=&s^{n+k+|\alpha|}\mathcal{O}_{\lambda}(1).
\end{split}
\end{equation}
\end{theorem}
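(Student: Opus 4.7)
The plan is to reduce the statement to the earlier space-discrete estimates, Proposition \ref{pro:space:estimate} and Lemma \ref{lem:space:estimate}, by exploiting the factorization $r^{2}=r\cdot r$ inside the product and then transferring the outer operator $A_{h}^{p}D_{h}^{l}\partial_{x}^{\beta}$ onto a smooth combination of the weights. First I would write
\begin{equation*}
r^{2}A_{h}^{j}D_{h}^{k}(\partial_{x}^{\alpha}\rho)\,A_{h}^{m}D_{h}^{n}(\rho)=\bigl[rA_{h}^{j}D_{h}^{k}\partial_{x}^{\alpha}\rho\bigr]\cdot\bigl[rA_{h}^{m}D_{h}^{n}\rho\bigr],
\end{equation*}
and apply Proposition \ref{pro:space:estimate} to each factor to obtain
\begin{equation*}
rA_{h}^{j}D_{h}^{k}\partial_{x}^{\alpha}\rho=r\,\partial_{x}^{k}\partial_{x}^{\alpha}\rho+s^{k+|\alpha|}\mathcal{O}_{\lambda}\bigl((sh)^{2}\bigr),\qquad rA_{h}^{m}D_{h}^{n}\rho=r\,\partial_{x}^{n}\rho+s^{n}\mathcal{O}_{\lambda}\bigl((sh)^{2}\bigr).
\end{equation*}

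Next, I would multiply these two identities and use Lemma \ref{3.7}, which guarantees that $r\,\partial_{x}^{k}\partial_{x}^{\alpha}\rho=s^{k+|\alpha|}\mathcal{O}_{\lambda}(1)$ and $r\,\partial_{x}^{n}\rho=s^{n}\mathcal{O}_{\lambda}(1)$. Each cross term then carries a prefactor $s^{n+k+|\alpha|}$ multiplied by $(sh)^{2}$ (and the product of the two remainders is of even smaller order since $sh\le 1$), yielding the intermediate identity
\begin{equation*}
r^{2}A_{h}^{j}D_{h}^{k}(\partial_{x}^{\alpha}\rho)\,A_{h}^{m}D_{h}^{n}(\rho)=r^{2}\,\partial_{x}^{k}\partial_{x}^{\alpha}\rho\,\partial_{x}^{n}\rho+s^{n+k+|\alpha|}\mathcal{O}_{\lambda}\bigl((sh)^{2}\bigr).
\end{equation*}

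Then I would apply $A_{h}^{p}D_{h}^{l}\partial_{x}^{\beta}$ to both sides. On the leading smooth object $r^{2}\partial_{x}^{k}\partial_{x}^{\alpha}\rho\,\partial_{x}^{n}\rho$, which by Corollary \ref{3.8} is of size $s^{n+k+|\alpha|}\mathcal{O}_{\lambda}(1)$ with derivatives up to any order of the same size, a Taylor expansion of the discrete averaging and difference operators (each correction producing an $h^{2}$ factor together with a continuous derivative of the argument) yields
\begin{equation*}
A_{h}^{p}D_{h}^{l}\partial_{x}^{\beta}\bigl(r^{2}\partial_{x}^{k}\partial_{x}^{\alpha}\rho\,\partial_{x}^{n}\rho\bigr)=\partial_{x}^{l}\partial_{x}^{\beta}\bigl(r^{2}\partial_{x}^{k}\partial_{x}^{\alpha}\rho\,\partial_{x}^{n}\rho\bigr)+s^{n+k+|\alpha|}\mathcal{O}_{\lambda}\bigl((sh)^{2}\bigr),
\end{equation*}
where I use once again that every spatial derivative of the smooth weight combination remains of order $s^{n+k+|\alpha|}\mathcal{O}_{\lambda}(1)$. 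Finally, applying $A_{h}^{p}D_{h}^{l}\partial_{x}^{\beta}$ to the remainder $s^{n+k+|\alpha|}\mathcal{O}_{\lambda}((sh)^{2})$ preserves its order, since each application of $D_{h}$ or $A_{h}$ on an $L^{\infty}$-bounded function (with bounded derivatives in the sense of Lemma \ref{lem:space:estimate}) is again bounded, and summing both contributions produces the claimed identity.

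The main obstacle, in my view, is the careful bookkeeping of the orders in $s$: one must verify at every step that no hidden $D_{h}$ produces an extra $s$ factor beyond the announced $s^{n+k+|\alpha|}$. The safeguard is that all $s$-powers come from the differentiations of $\rho$ already present inside the parentheses, and each further outer $D_{h}$ acts on a product of $r$'s and $\rho$'s whose total exponent in $s$ is already counted; the $h^{2}$-corrections coming from the Taylor expansion of $A_{h}$, $D_{h}$ only produce $\mathcal{O}_{\lambda}$-bounded factors (once $\lambda$ is fixed), not extra $s$. A secondary technical point is the use of the hypothesis $\tau h(\delta T^{2})^{-1}\le 1$, which ensures $sh\le 1$ uniformly in time and thus allows the remainder $(sh)^{2}$ to truly dominate higher-order Taylor terms that would otherwise appear as $(sh)^{2k}$ with $k\ge 2$.
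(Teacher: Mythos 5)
The paper does not prove this statement: it is imported verbatim from \cite{LOP-2020} (Theorem 4.9 there), so there is no in-paper proof to compare against. Your strategy --- factor $r^{2}=r\cdot r$, apply Proposition \ref{pro:space:estimate} to each factor, multiply, and then push the outer operator $A_{h}^{p}D_{h}^{l}\partial_{x}^{\beta}$ through via Taylor expansion --- is essentially the strategy of the cited reference and of the underlying estimates in \cite{boyer-2010}, and the order bookkeeping in $s$ is right: by Corollary \ref{3.8} every continuous derivative of $r^{2}\partial_{x}^{k}\partial_{x}^{\alpha}\rho\,\partial_{x}^{n}\rho$ stays of size $s^{n+k+|\alpha|}$, so the $h^{2}$ Taylor corrections are dominated by $s^{n+k+|\alpha|}(sh)^{2}$ once $s\geq 1$, and the product of the two remainders is absorbed using $sh\leq 1$.

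The one step you cannot wave through is the last one: applying $A_{h}^{p}D_{h}^{l}\partial_{x}^{\beta}$ to the remainder $s^{n+k+|\alpha|}\mathcal{O}_{\lambda}((sh)^{2})$ of your intermediate identity. The symbol $\mathcal{O}_{\lambda}$ only records an $L^{\infty}_{h}$ bound, and each application of $D_{h}$ to a merely bounded function can cost a factor $h^{-1}$; ``is again bounded'' is exactly the assertion that needs proof, and it is the reason Lemma \ref{lem:space:estimate} and this theorem are separate statements from Proposition \ref{pro:space:estimate} rather than corollaries of it. The repair is to never discard the structure of the remainder: keep it in the explicit integral form of Corollary \ref{coro:AD}, i.e.\ as $h^{2}$ times integrals over $\sigma\in[0,1]$ of shifted continuous derivatives $\partial_{x}^{\gamma}\rho(\cdot+ch\sigma)$, so that $A_{h}^{p}D_{h}^{l}\partial_{x}^{\beta}$ acts on smooth integrands and the shifted estimates of Lemma \ref{3.7} (the $\sigma$-dependent version) and Corollary \ref{3.8} apply term by term. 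With that modification your argument closes; as written, the final paragraph asserts rather than proves the stability of the error under the outer discrete operators.
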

Let us finally mention the behavior of the time difference operator applied on the Carleman weight function. We borrow from \cite[Proposition 2.14]{BoyerRousseau2014} and \cite[Lemma A.16]{GC-HS-2021} the following estimates
\begin{equation}\label{eq:partial:estimate}
\begin{split}
\partial_{t}(r\partial_{x}^{\alpha}\rho)=&s^{\alpha}T\theta\mathcal{O}_{\lambda}(1),\\
    \partial_{t}^{\beta}(r(x,t)\partial_{x}^{\alpha}\rho(t,x+\sigma h))=&T^{\beta}s^{\alpha}\theta^{\beta}\mathcal{O}_{\lambda}(1),
    \end{split}
\end{equation}
which holds for $\beta=1,2$,  $\alpha\in\mathbb{N}$ and $\tau h(\delta T^{2})^{-1}\leq 1$. This last condition must be corrected in the statement of \cite[Lemma A.16]{GC-HS-2021}.

We note that combining the methodology of the proofs from \cite[Lemma A.16]{GC-HS-2021} and \cite[Proposition 4.5]{LOP-2020} it is possible to extend the results presented in \cite{GC-HS-2021} with respect to the order of the space operator. Indeed, we firstly need the following result from \cite{LOP-2020}.
\begin{corollary}[{\cite[Corollary 4.2]{LOP-2020}}]\label{coro:AD}
Let $f$ be a $(n+4)-$times differentiable function defined on $\mathbb{R}$ and $m,n\in\mathbb{N}$. Then
\begin{equation*}
        \begin{split}
        A^{m}_{h}D^{n}_{h}f&=f^{(n)}+R_{A^{m}_{h}}(f^{(n)})+R_{D^{n}_{h}}(f)+R_{A^{m}_{h}D^{n}_{h}}(f),
        \end{split}
\end{equation*}
where 
{\small
\begin{equation*}
 R_{D^{n}_{h}}(f):=h^{2}\sum_{k=0}^{n}\binom{n}{k}(-1)^{k}\left(\frac{(n-2k)}{2} \right)^{n+2}\int_{0}^{1}\frac{(1-\sigma)^{n+1}}{(n+1)!}f^{(n+2)}(\cdot+\frac{(n-2k)h}{2}\sigma)d\sigma,
 \end{equation*}}
 {\small
 \begin{equation*}R_{A^{n}_{h}}(f):=\frac{h^{2}}{2^{n+2}}\sum_{k=0}^{n}\binom{n}{k}(n-2k)^{2}\int_{0}^{1}(1-\sigma)f^{(2)}(\cdot+\frac{(n-2k)h}{2}\sigma)d\sigma.
 \end{equation*}}
 and
 $R_{A_{h}^{m}D_{h}^{n}}:=R_{A_{h}^{m}}\circ R_{D_{h}^{n}}$.
\end{corollary}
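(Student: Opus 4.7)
The plan is to establish the decomposition by a two-stage Taylor expansion in which the key combinatorial cancellations come from classical binomial identities. I would first put both operators in closed convolution form,
\begin{equation*}
A_h^m f(x) = \frac{1}{2^m}\sum_{k=0}^m \binom{m}{k} f\bigl(x + \tfrac{(m-2k)h}{2}\bigr), \qquad D_h^n f(x) = \frac{1}{h^n}\sum_{k=0}^n \binom{n}{k}(-1)^k f\bigl(x + \tfrac{(n-2k)h}{2}\bigr),
\end{equation*}
both of which follow by straightforward induction on $m$ and $n$ from the definitions of $A_h$ and $D_h$, or equivalently by reading off coefficients in the generating identities $(e^{t/2}\pm e^{-t/2})^{\bullet}$.

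Next, to unfold $D_h^n f$, I would Taylor expand each shifted evaluation $f\bigl(x+(n-2k)h/2\bigr)$ to order $n+1$ with integral remainder; substituting into the formula above and interchanging sums regroups terms as
\begin{equation*}
D_h^n f(x) = \sum_{j=0}^{n+1} \frac{h^{\,j-n}}{j!\,2^{\,j}}\, f^{(j)}(x)\, \underbrace{\sum_{k=0}^n \binom{n}{k}(-1)^k (n-2k)^{j}}_{=:\,S_{n,j}} \;+\; R_{D_h^n}(f)(x).
\end{equation*}
The decisive combinatorial step is to evaluate $S_{n,j}$. Reading off coefficients of $t^{j}$ in $(e^{t/2}-e^{-t/2})^n=(2\sinh(t/2))^n$, whose Taylor series contains only terms $t^{n+2\ell}$, gives $S_{n,j}=0$ for $0\le j<n$, $S_{n,n}=2^n n!$, and $S_{n,n+1}=0$. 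Hence all explicit terms collapse to $f^{(n)}(x)$, while the integral remainder produces precisely $R_{D_h^n}(f)$ as stated, the prefactor $h^2$ arising from dividing $h^{n+2}$ by $h^n$.

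The corresponding derivation for $A_h^m$ is analogous but truncated at first order: the expansion $g(x+\beta h) = g(x) + \beta h\,g'(x) + \beta^2 h^2\int_0^1(1-\sigma)g''(x+\beta h\sigma)\,d\sigma$, together with $\sum_k\binom{m}{k}(m-2k)=0$ (which is the vanishing of the $t^1$ coefficient of $(2\cosh(t/2))^m$), yields $A_h^m g = g + R_{A_h^m}(g)$ with the stated integral expression for $R_{A_h^m}$. Finally, by linearity of $A_h^m$ I would compose the two expansions,
\begin{equation*}
A_h^m D_h^n f \;=\; A_h^m\bigl(f^{(n)}+R_{D_h^n}(f)\bigr) \;=\; f^{(n)}+R_{A_h^m}(f^{(n)})+R_{D_h^n}(f)+R_{A_h^m}\!\circ\! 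R_{D_h^n}(f),
\end{equation*}
which is the claimed identity upon setting $R_{A_h^m D_h^n} := R_{A_h^m}\circ R_{D_h^n}$. The hypothesis $f\in C^{n+4}$ is exactly what is needed so that $R_{D_h^n}(f)$ (built from $f^{(n+2)}$) is itself $C^2$, allowing the second, order-$2$ Taylor expansion to be applied to it.

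The main obstacle I anticipate is the bookkeeping in the first stage: matching all prefactors (the $1/(n+1)!$, the $((n-2k)/2)^{n+2}$, the signs, and the positions of the shifts inside the integral) between the raw Taylor remainder and the compact form stated for $R_{D_h^n}$, and in particular verifying the parity identity $S_{n,n+1}=0$ so that the remainder genuinely starts at order $h^2$ rather than $h$. Once the generating-function identities for $S_{n,j}$ are in hand, everything else is routine algebraic composition.
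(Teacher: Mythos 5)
Your proof is correct. The paper does not actually prove this statement itself — it is imported verbatim from \cite{LOP-2020} — but your derivation (the closed binomial forms for $A_h^m$ and $D_h^n$, Taylor expansion with integral remainder, evaluation of $S_{n,j}$ via the generating function $(2\sinh(t/2))^n$ giving $S_{n,j}=0$ for $j<n$ and $j=n+1$ and $S_{n,n}=2^n n!$, and composition by linearity of $A_h^m$) is precisely the standard argument behind the cited result, and the bookkeeping you flag does check out, including the fact that $f\in C^{n+4}$ is what permits applying the order-two expansion of $A_h^m$ to $R_{D_h^n}(f)$.
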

Then, as in the proof of \cite[Proposition 4.5]{LOP-2020} which is similar in spirit to the estimates from \cite{boyer-2010}, applying the above Corollary to $\partial^{\alpha}_{x}\rho$ yields
\begin{equation}
    rA_{h}^{m}D_{h}^{n}(\partial^{\alpha}_{x}\rho)=r\partial_{x}^{n+\alpha}\rho+rR_{A_{h}^{m}}(\partial_{x}^{n+\alpha}\rho)+rR_{D_{h}^{n}}(\partial^{\alpha}_{x}\rho)+rR_{A_{h}^{m}D_{h}^{n}}(\partial^{\alpha}_{x}\rho).
\end{equation}
Now, we use the first order Taylor formula in the time variable to write
\begin{equation}\label{eq:almost:estimate}
\begin{split}
    D_{t}(rA_{h}^{m}D_{h}^{n}(\partial^{\alpha}_{x}\rho))=&\partial_{t}(r\partial_{x}^{n+\alpha}\rho)+\partial_{t}(rR_{A_{h}^{m}}(\partial_{x}^{n+\alpha}\rho))+\partial_{t}(rR_{D_{h}^{n}}(\partial^{\alpha}_{x}\rho))+\partial_{t}(rR_{A_{h}^{m}D_{h}^{n}}(\partial^{\alpha}_{x}\rho))\\
    &+\Delta t\int_{0}^{1}(1-\gamma)\partial_{t}^{2}\left( r(x,t+\gamma\Delta t)A_{h}^{m}D_{h}^{n}(\partial^{\alpha}_{x}\rho(x,t+\gamma\Delta t))\right)d\gamma.
    \end{split}
\end{equation}
We note that  $\partial_{t}(r(x,t)(\partial_{x}^{n+\alpha+2}\rho)(x+(n-2)h\sigma/2,t))=Ts^{\alpha+n+2}\theta(t)\mathcal{O}(1)$ due to \eqref{eq:partial:estimate}. This gives 
\begin{equation}\label{eq:estimate:R}
\begin{split}
\partial_{t}(rR_{A_{h}^{m}}(\partial_{x}^{n+\alpha}\rho))=&T(sh)^{2}s^{n+\alpha}\theta\mathcal{O}_{\lambda}(1),\\ \partial_{t}(rR_{D_{h}^{n}}(\partial^{\alpha}_{x}\rho))=&T(sh)^{2}s^{n+\alpha}\theta\mathcal{O}_{\lambda}(1).
\end{split}
\end{equation}
Similarly, we have 
\begin{equation}\label{eq:estimate:R:01}
\partial_{t}(rR_{A_{h}^{m}D_{h}^{n}}(\partial^{\alpha}_{x}\rho))=T(sh)^{4}s^{n+\alpha}\theta\mathcal{O}_{\lambda}(1).
\end{equation}
For the last term of \eqref{eq:almost:estimate} we proceed as in the proof of \cite[Lemma A.16]{GC-HS-2021}; that is, we use the change of variable $t\mapsto t+\gamma \Delta t$ for $\gamma\in [0,1]$ and then thanks to \eqref{eq:partial:estimate}, \eqref{eq:estimate:R} and \eqref{eq:estimate:R:01} we obtain
\begin{equation}
\begin{split}
    \Delta t\int_{0}^{1}(1-\gamma)\partial_{t}^{2}\left( r(x,t+\gamma\Delta t)A_{h}^{m}D_{h}^{n}(\partial^{\alpha}\rho(x,t+\gamma\Delta t))\right)d\gamma=&\Delta t T^{2}s^{n+\alpha}\theta^{2}\mathcal{O}_{\lambda}(1)\\
    &+\Delta t (sh)^{2}T^{2}s^{n+\alpha}\theta^{2}\mathcal{O}_{\lambda}(1).
    \end{split}
\end{equation}
Therefore, from the preceding discussion we have proven the following important result, which gives us a formula when applying our discrete operator of any order to the weight function that we will use in the Carleman inequality.
\begin{theorem}\label{prop:time:estimate}
Provided $\tau h(\delta T^{2})^{-1}\leq 1$ and $\sigma$ bounded, we have
\begin{equation}\label{eq:prop:time:estimate}
\begin{split}
    D_{t}(rA_{h}^{m}D_{h}^{n}(\partial_{x}^{\alpha}\rho))=&\partial_{t}(r\partial_{x}^{n+\alpha}\rho)+T(sh)^{2}s^{n+\alpha}\theta\mathcal{O}_{\lambda}(1)+\Delta t T^{2}s^{n+\alpha}\theta^{2}\mathcal{O}_{\lambda}(1)\\
    &+\Delta t (sh)^{2}T^{2}s^{n+\alpha}\theta^{2}\mathcal{O}_{\lambda}(1).
\end{split}
\end{equation}
\end{theorem}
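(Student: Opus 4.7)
The plan is to combine the decomposition of the discrete space operator $A_h^m D_h^n$ into its continuous counterpart plus explicit remainder terms (Corollary \ref{coro:AD}) with a first-order Taylor expansion in time that replaces $D_t$ by $\partial_t$ up to an $O(\Delta t)$ correction, and then to control each resulting piece using the continuous-derivative estimates \eqref{eq:partial:estimate} already at our disposal.

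First I would apply Corollary \ref{coro:AD} to $f=\partial_x^{\alpha}\rho$, which is smooth enough under the standing smoothness assumptions on $\psi$, to write
\begin{equation*}
A_h^m D_h^n(\partial_x^{\alpha}\rho) = \partial_x^{n+\alpha}\rho + R_{A_h^m}(\partial_x^{n+\alpha}\rho) + R_{D_h^n}(\partial_x^{\alpha}\rho) + R_{A_h^m D_h^n}(\partial_x^{\alpha}\rho),
\end{equation*}
where each remainder carries an explicit $h^2$ factor (or $h^4$ for the composed one). Multiplying by $r$ and applying the Taylor formula in $t$ at step size $\Delta t$, one gets the identity
\begin{equation*}
D_t(r A_h^m D_h^n(\partial_x^{\alpha}\rho)) = \partial_t(r A_h^m D_h^n(\partial_x^{\alpha}\rho)) + \Delta t\!\int_0^1 (1-\gamma)\partial_t^2\bigl(r(\cdot,t+\gamma\Delta t)A_h^m D_h^n(\partial_x^{\alpha}\rho)(\cdot,t+\gamma\Delta t)\bigr)\,d\gamma.
\end{equation*}

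Next I would split the $\partial_t$ term using the decomposition above, yielding $\partial_t(r\partial_x^{n+\alpha}\rho)$ plus three remainder contributions of the form $\partial_t(r R_{\cdot}(\partial_x^{\alpha}\rho))$. Since the remainders $R_{A_h^m}$, $R_{D_h^n}$, $R_{A_h^m D_h^n}$ are integral averages of $\partial_x^{n+\alpha+2}\rho$ (or higher order) evaluated at slightly shifted points $\cdot + \sigma h$ with $\sigma$ bounded, the shifted version of \eqref{eq:partial:estimate} gives
\begin{equation*}
\partial_t(r R_{A_h^m}(\partial_x^{n+\alpha}\rho)) = T(sh)^2 s^{n+\alpha}\theta\, \mathcal{O}_\lambda(1), \quad \partial_t(r R_{D_h^n}(\partial_x^{\alpha}\rho)) = T(sh)^2 s^{n+\alpha}\theta\, \mathcal{O}_\lambda(1),
\end{equation*}
and similarly $\partial_t(r R_{A_h^m D_h^n}(\partial_x^{\alpha}\rho)) = T(sh)^4 s^{n+\alpha}\theta\, \mathcal{O}_\lambda(1)$; the last is absorbed into the $T(sh)^2 s^{n+\alpha}\theta \mathcal{O}_\lambda(1)$ term using $sh\le 1$.

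Finally I would treat the Taylor remainder. After the change of variable $t\mapsto t+\gamma\Delta t$, the integrand is $\partial_t^2(r A_h^m D_h^n(\partial_x^{\alpha}\rho))$ evaluated at the shifted time; decomposing $A_h^m D_h^n(\partial_x^{\alpha}\rho)$ once more via Corollary \ref{coro:AD} and invoking the second-order estimate $\partial_t^\beta(r\,\partial_x^{\alpha}\rho(\cdot+\sigma h,\cdot)) = T^\beta s^{\alpha}\theta^\beta\mathcal{O}_\lambda(1)$ from \eqref{eq:partial:estimate}, the integral produces exactly $\Delta t\,T^2 s^{n+\alpha}\theta^2\mathcal{O}_\lambda(1) + \Delta t(sh)^2 T^2 s^{n+\alpha}\theta^2\mathcal{O}_\lambda(1)$, using once again that the remainder pieces come with an extra $(sh)^2$ factor. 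Collecting everything yields \eqref{eq:prop:time:estimate}. The one subtlety I anticipate is bookkeeping the uniformity in $\gamma\in[0,1]$ of the $\mathcal{O}_\lambda(1)$ constants: for the bound on $\theta(t+\gamma\Delta t)$ to be comparable with $\theta(t)$, one needs the standard argument that $\theta$ varies slowly on scale $\Delta t$, which is controlled by the hypothesis $\tau h/(\delta T^2)\le 1$ together with the condition $\Delta t \le \epsilon_0 \delta^4 T^6/\tau^4$ implicit in the later Carleman applications; since here we only need the shifted quantities to absorb into an $\mathcal{O}_\lambda(1)$ factor, the weaker condition $\Delta t\lesssim \delta T^2$ suffices.
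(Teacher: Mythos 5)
Your proposal follows essentially the same route as the paper: decompose $A_h^m D_h^n(\partial_x^{\alpha}\rho)$ via Corollary \ref{coro:AD}, apply the first-order Taylor formula in time to convert $D_t$ into $\partial_t$ plus an integral remainder, estimate the spatial remainders with the shifted version of \eqref{eq:partial:estimate} (the $(sh)^4$ piece being absorbed since $sh\leq \tau h(\delta T^2)^{-1}\leq 1$), and handle the Taylor remainder after the change of variable $t\mapsto t+\gamma\Delta t$ exactly as in \cite[Lemma A.16]{GC-HS-2021}. The uniformity issue for $\theta(t+\gamma\Delta t)$ that you flag is the same one the paper addresses in the remark following the theorem via the condition $\tau\Delta t(T^3\delta^2)^{-1}\leq 1/2$; your argument is correct.
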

We notice that $\frac{\tau\Delta t}{T^{3}\delta^{2}}\leq \frac{1}{2}$ implies $\max_{t\in [0,T+\Delta t]}\theta(t)\leq \frac{2}{\delta T^{2}}$. Thus, this condition in addition to the hypothesis of the above Proposition will be useful to obtain a precise estimate of the error terms from \eqref{eq:prop:time:estimate}, see for instance \cite[Lemma A.16]{GC-HS-2021}.

Finally, for the conjugate on the time variable we need the following estimation and upper bound for the time discrete variable.
\begin{lemma}[{\cite[Lemma B.4]{boyer:victor:2020}}]\label{lem:time:estimate} Under the condition $\Delta t \tau (T^3 \delta^2)^{-1}\leq 1$, we get
$$
  \taut^{-}(r)D_t \rho=-\tau \taut^- (\theta')\varphi +\Delta t \left(\dfrac{\tau}{\delta^3 T^4}+\dfrac{\tau^2}{\delta^4 T^6} \right) \mathcal{O}_\lambda(1).$$
\end{lemma}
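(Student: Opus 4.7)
\textbf{Proof plan for Lemma \ref{lem:time:estimate}.} The plan is to reduce the discrete expression $\taut^{-}(r)D_{t}\rho$ to its continuous analog $r\partial_{t}\rho$ via a one-step Taylor expansion, and then bound the remainder using the size of the time derivatives of $\theta$ together with the standing hypothesis $\Delta t\,\tau(T^{3}\delta^{2})^{-1}\leq 1$.

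First, set $\tilde t := t-\Delta t/2$, so that $\taut^{-}(r)(x,t)=r(x,\tilde t)$ and
$D_{t}\rho(x,t)=(\rho(x,\tilde t+\Delta t)-\rho(x,\tilde t))/\Delta t$. Applying Taylor's formula with integral remainder in the time variable yields
\begin{equation*}
D_{t}\rho(x,t)=\partial_{t}\rho(x,\tilde t)+\Delta t\int_{0}^{1}(1-\sigma)\,\partial_{t}^{2}\rho(x,\tilde t+\sigma\Delta t)\,d\sigma.
\end{equation*}
Multiplying by $r(x,\tilde t)$ and using the pointwise identity $r\,\partial_{t}\rho=-\tau\theta'\varphi$ (which comes from differentiating $\rho=e^{-\tau\theta(t)\varphi(x)}$) produces the main term $-\tau\theta'(\tilde t)\varphi=-\tau\taut^{-}(\theta')\varphi$.

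It remains to control the error. Compute $\partial_{t}^{2}\rho=\bigl(-\tau\theta''\varphi+\tau^{2}(\theta')^{2}\varphi^{2}\bigr)\rho$, so that the remainder equals
\begin{equation*}
\Delta t\int_{0}^{1}(1-\sigma)\,r(x,\tilde t)\rho(x,\tilde t+\sigma\Delta t)\bigl[-\tau\theta''(\tilde t+\sigma\Delta t)\varphi+\tau^{2}(\theta'(\tilde t+\sigma\Delta t))^{2}\varphi^{2}\bigr]d\sigma.
\end{equation*}
Direct inspection of $\theta(t)=[(t+\delta T)(T+\delta T-t)]^{-1}$ gives $\|\theta'\|_{\infty}\lesssim (\delta^{2}T^{3})^{-1}$ and $\|\theta''\|_{\infty}\lesssim (\delta^{3}T^{4})^{-1}$. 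To handle the ratio, observe that
\begin{equation*}
r(x,\tilde t)\rho(x,\tilde t+\sigma\Delta t)=\exp\bigl(\tau[\theta(\tilde t)-\theta(\tilde t+\sigma\Delta t)]\varphi(x)\bigr),
\end{equation*}
whose exponent is bounded in absolute value by $\tau\Delta t\,\|\theta'\|_{\infty}\|\varphi\|_{\infty}\leq C_{\lambda}\,\tau\Delta t(\delta^{2}T^{3})^{-1}$, which is $\mathcal{O}_\lambda(1)$ by hypothesis. Consequently $r(x,\tilde t)\rho(x,\tilde t+\sigma\Delta t)=\mathcal{O}_\lambda(1)$ uniformly in $\sigma\in[0,1]$ and $(x,t)$.

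Plugging these bounds into the remainder yields
\begin{equation*}
\text{remainder}=\Delta t\left(\frac{\tau}{\delta^{3}T^{4}}+\frac{\tau^{2}}{\delta^{4}T^{6}}\right)\mathcal{O}_{\lambda}(1),
\end{equation*}
where $\varphi$ and $\varphi^{2}$ have been absorbed into $\mathcal{O}_\lambda(1)$ since they depend only on $\lambda$. Combined with the main term, this gives the claimed identity. The main obstacle is the verification that $r(\cdot,\tilde t)\rho(\cdot,\tilde t+\sigma\Delta t)$ stays uniformly bounded; this is precisely what forces the hypothesis $\Delta t\,\tau(T^{3}\delta^{2})^{-1}\leq 1$, and all other steps are a routine bookkeeping of the powers of $\tau$, $\delta$ and $T$ produced by the successive time-derivatives of $\theta$.
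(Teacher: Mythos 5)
Your argument is correct and is essentially the same first-order Taylor expansion with integral remainder used in the cited source (\cite[Lemma B.4]{boyer:victor:2020}) and in the analogous computation leading to Theorem \ref{prop:time:estimate}: the identity $r\partial_t\rho=-\tau\theta'\varphi$ gives the principal term, and the remainder is controlled by $\|\theta'\|_\infty\lesssim(\delta^2T^3)^{-1}$, $\|\theta''\|_\infty\lesssim(\delta^3T^4)^{-1}$ together with the uniform boundedness of $r(\cdot,\tilde t)\rho(\cdot,\tilde t+\sigma\Delta t)$, which is exactly where the hypothesis $\Delta t\,\tau(T^3\delta^2)^{-1}\leq 1$ enters. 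The only point worth flagging is that the sup-norm bounds on $\theta'$, $\theta''$ must be taken over the slightly enlarged interval $[0,T+\Delta t/2]$ on which $\rho$ is evaluated, which costs only a harmless constant under the same smallness condition.
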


\begin{lemma}[{\cite[Lemma B.5]{boyer:victor:2020}}]\label{lem:discrete:theta}There exists a constant $C>0$ independent of $\Delta t, \delta$ and $T$ such that
\begin{itemize}
    \item $|D_t (\theta^\ell)|\leq \ell T\taut^{-}(\theta^{\ell+1}) + C \dfrac{\Delta t}{\delta^{\ell+2} T^{2\ell+2}}$,\quad for each $\ell\in \mathbb{N}$,
    \item $D_t (\theta')\leq C T^2 \taut^-(\theta^3)+C\dfrac{\Delta t}{\delta^4 T^5}$.
\end{itemize}
\end{lemma}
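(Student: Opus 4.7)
The plan is to treat $\theta(t)=1/[(t+\delta T)(T+\delta T-t)]$ as a smooth function on $[0,T]$, read off pointwise bounds on its continuous derivatives, and then compare the discrete difference quotient $D_{t}$ to a pointwise derivative via the mean value theorem (or the fundamental theorem of calculus), controlling the discrepancy by a first-order Taylor remainder. Throughout I would use the elementary bounds $(t+\delta T)(T+\delta T-t)\ge(1+\delta)\delta T^{2}$, giving $\theta(t)\le 1/(\delta T^{2})$, together with $(t+\delta T)(T+\delta T-t)\le(1+\delta)^{2}T^{2}$, giving $T^{2}\theta(t)\ge 1/(1+\delta)^{2}$; the latter lets us absorb the $2\theta^{2}$ term of $\theta''$ into $T^{2}\theta^{3}$.

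First I would compute the continuous derivatives
\[
\theta'(t)=(2t-T)\theta^{2}(t),\qquad \theta''(t)=2\theta^{2}(t)+2(2t-T)^{2}\theta^{3}(t),
\]
so that $|\theta'|\le T\theta^{2}$ and $\theta''\le CT^{2}\theta^{3}$ on $[0,T]$. The Leibniz rule then yields $|(\theta^{\ell})'|\le \ell T\theta^{\ell+1}$ and $|(\theta^{\ell})''|\le C_{\ell}T^{2}\theta^{\ell+2}$, and combining the second of these with $\theta\le 1/(\delta T^{2})$ gives $|(\theta^{\ell})''|\le C_{\ell}/(\delta^{\ell+2}T^{2\ell+2})$.

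For the first estimate, the mean value theorem furnishes $t^{\ast}\in(t-\Delta t/2,t+\Delta t/2)$ with $D_{t}(\theta^{\ell})(t)=(\theta^{\ell})'(t^{\ast})$, so $|D_{t}(\theta^{\ell})(t)|\le \ell T\,\theta^{\ell+1}(t^{\ast})$. Writing
\[
\theta^{\ell+1}(t^{\ast})=\taut^{-}(\theta^{\ell+1})(t)+\int_{t-\Delta t/2}^{t^{\ast}}(\theta^{\ell+1})'(u)\,du
\]
and bounding the integrand by $(\ell+1)T\theta^{\ell+2}\le(\ell+1)/(\delta^{\ell+2}T^{2\ell+3})$, the length of the integration interval being at most $\Delta t$, yields a remainder of size $C\Delta t/(\delta^{\ell+2}T^{2\ell+3})$; multiplying by $\ell T$ then produces the announced estimate $|D_{t}(\theta^{\ell})|\le \ell T\taut^{-}(\theta^{\ell+1})+C\Delta t/(\delta^{\ell+2}T^{2\ell+2})$.

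For the second estimate I would use the fundamental theorem of calculus to write
\[
D_{t}(\theta')(t)=\frac{1}{\Delta t}\int_{t-\Delta t/2}^{t+\Delta t/2}\theta''(s)\,ds\le \frac{CT^{2}}{\Delta t}\int_{t-\Delta t/2}^{t+\Delta t/2}\theta^{3}(s)\,ds
\]
and replace $\theta^{3}(s)$ by $\taut^{-}(\theta^{3})(t)$ at the cost of an error bounded by $\Delta t\cdot\max|(\theta^{3})'|\le 3\Delta tT\theta^{4}\le 3\Delta t/(\delta^{4}T^{7})$; the $T^{2}$ prefactor turns this error into $C\Delta t/(\delta^{4}T^{5})$, which matches the claimed bound. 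The only delicate part I foresee is the bookkeeping required to check that each $\theta$-factor appearing in a remainder is bounded by exactly the correct power of $(\delta T^{2})^{-1}$; no absorption argument or compatibility condition is involved, so I do not anticipate a substantive conceptual obstacle beyond careful elementary calculus (the constant $C$ will, however, unavoidably depend on $\ell$ through the Leibniz combinatorics).
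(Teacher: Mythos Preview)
The paper does not supply its own proof of this lemma; it is merely quoted from \cite[Lemma~B.5]{boyer:victor:2020}. Your proposal is a correct, self-contained argument: the derivative computations $\theta'=(2t-T)\theta^{2}$ and $\theta''=2\theta^{2}+2(2t-T)^{2}\theta^{3}$ are right, the pointwise bounds $\theta\le(\delta T^{2})^{-1}$ and $T^{2}\theta\ge(1+\delta)^{-2}$ on $[0,T]$ are exactly what is needed to absorb lower-order powers of $\theta$, and the mean-value/Taylor comparison of $D_{t}$ with the continuous derivative produces the announced remainders with the correct powers of $\delta$ and $T$. Your remark that the constant in the first item depends on $\ell$ is consistent with the statement, which only asks for independence from $\Delta t$, $\delta$ and $T$; the first item is in any case applied in the paper only for finitely many values of $\ell$.
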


\section{Proof of Theorem \ref{theo:discrete:carleman}}\label{proofcarleman}
The proof of the Carleman estimate follows the classical conjugation for suitable operators involving the heat equation with dynamic boundary conditions. However, contrary to the continuous case, several new terms arise from the dynamic boundary condition and the discretization in space and time. In order to deal with these new difficulties, we repeat some steps from \cite{maniar2017null}, \cite{LOP-2020} and \cite{GC-HS-2021} in a modified form.

In order to simplify the presentation, the proof is done in several steps.

\subsection{Step 1: Conjugation}

By regarding the weight function, the first step is to consider the change of variable $q:=\rho z$ and to split the operator $\mathcal{P}(\rho z):=-D_{t}(\rho z)-D_{h}^{2}(\rho z)$ into a similar form to the continuous case. For the spatial operator we note that using the discrete Leibniz formula from Proposition \ref{lem:product:rule} we have that
\begin{equation*}
    D_{h}^{2}(\rho z)=D_{h}^{2}\rho\,A_{h}^{2}z+2D_{h}A_{h}\rho\,D_{h}A_{h}z+A_{h}^{2}\rho\,D_{h}^{2}z.
\end{equation*}
Then, applying $\taut^-$ in the above equation, and then multiplying by $\taut^-(r)$ we obtain
\begin{equation}\label{eq:space:conjugate}
\begin{split}
 \taut^{-}(r)\taut^{-}D_{h}^{2}(\rho z)=&\taut^{-}\left(rD_{h}^{2}\rho\,A_{h}^{2}z\right)+2\taut^{-}\left(rD_{h}A_{h}\rho\,D_{h}A_{h}z\right)+\taut^{-}\left(rD_{h}^{2}z\,A_{h}^{2}\rho\right).
 \end{split}
\end{equation}

On the other hand, using \eqref{eq:time:product:rule} and then multiplying by $\taut^{-}(r)$ the resulting expression we get
\begin{equation*}
\begin{split}
    \taut^{-}(r)\,D_{t}(\rho z)=&D_{t}z+\taut^{-}(r)D_{t}\rho\,\taut^{+}(z).
    \end{split}
\end{equation*}
By using that $\taut^{+}z=\taut^{-}z+\Delta t D_{t}z$ and Lemma \ref{lem:time:estimate}, provided $\tau\Delta t  (T^3 \delta^2)^{-1}\leq 1$, it follows that
\begin{equation}\label{eq:time:conjugate}
\begin{split}
    \taut^{-}(r)\,D_{t}(\rho z)
    =&D_{t}z-\tau\varphi\taut^{-}(\theta'z)+\Delta t\left(\frac{\tau}{\delta^{3}T^{4}}+\frac{\tau^{2}}{\delta^{4}T^{6}} \mathcal{O}_{\lambda}(1)\right)\taut^{+}(z)+\tau \Delta t\taut^{-}(\theta')\varphi D_{t}z.
    \end{split}
\end{equation}
Now, we set
\begin{equation}
\label{def:Ai:Bj}
\begin{split}
    A_{1}z:=&\taut^{-}(rA_{h}^{2}\rho\,D_{h}^{2}z),\quad A_{2}z:=\taut^{-}\left( rD_{h}^{2}\rho\, A_{h}^{2}z\right),\quad A_{3}z:=-\tau\varphi\taut^{-}(\theta'z),\\
    B_{1}z:=&2\taut^{-}(rD_{h}A_{h}\rho\, D_{h}A_{h}z),\quad B_{2}z:=-2\taut^{-}(s\partial_{x}^{2}\phi\,z),\quad B_{3}z:=D_{t}z.
    \end{split}
\end{equation}
Thus, combining \eqref{eq:space:conjugate} and \eqref{eq:time:conjugate}, we have the following identity
\begin{equation}
\label{eq:ABR}
    Az+Bz=Rz,\text{ in }\mathcal{M}\times \mathcal{N},
\end{equation}
where $A$, $B$ and $R$ are given by
\begin{equation}\label{def:A:B}
    Az:=A_{1}z+A_{2}z+A_{3}z,\quad Bz:=B_{1}z+B_{2}z+B_{3}z,
\end{equation}
and 
\begin{equation}\label{eq:R}
    Rz:=-\taut^{-}(r)\mathcal{P}(\rho z)-\Delta t\left(\frac{\tau}{\delta^{3}T^{4}}+\frac{\tau^{2}}{\delta^{4}T^{6}} \right)\mathcal{O}_{\lambda}(1)\taut^{+}(z)+\tau \Delta t\taut^{-}(\theta')\varphi D_{t}z-2\taut^{-}(s\partial_{x}^{2}\phi z),
\end{equation}
in $\mathcal{M}\times \mathcal{N}$, respectively.

Let us conjugate the equations where the dynamic boundary condition is considered. We note that following the same steps in obtaining equation \eqref{eq:time:conjugate}, and using Lemma \ref{lem:product:rule}, for the left spatial boundary we write  
\begin{align}
    \label{eq:CDR}
    Cz+Dz=R_{\Gamma_0}z\quad\text{in }\{0\}\times\mathcal{N},
\end{align}
where 
\begin{equation*}
\begin{split}
    Cz&:=D_{t}z+\taut^-(r)\taut_{+}^{-}(D_{h}\rho A_{h}z)=:C_{1}z+C_{2}z,\\
    Dz&:=-\tau\varphi\taut^{-}(\theta'z)+\taut^-(r)\taut_{+}^{-} (A_{h}\rho D_{h}z)=:D_{1}z+D_{2}z,\\
    R_{\Gamma_{0}}z&:=-\taut^{-}(r)N_{\Gamma_{0}}(\rho z)-\Delta t\left(\frac{\tau}{\delta^{3}T^{4}}-\frac{\tau^{2}}{\delta^{4}T^{6}} \right)\mathcal{O}_{\lambda}(1)\taut^{+}(z)+\tau \Delta t\taut^{-}(\theta')\varphi D_{t}z,
    \end{split}
\end{equation*}
and $N_{\Gamma_0}(\rho z)=D_{t}(\rho z) +D_h\taut^{-}_{+} (\rho z)$. 

In the same manner, for the right spatial boundary we write
\begin{align}
    \label{eq:EFR}
    Ez+Fz=R_{\Gamma_1}z\text{ in }\{1\}\times\mathcal{N},
\end{align}
where
\begin{equation*}
    \begin{split}
        Ez&:=D_{t}z-\taut^-(r)\taut^{-}_{-} (D_x\rho A_{h}z)=:E_{1}z+E_{2}z,\\
        Fz&:=-\tau\varphi\taut^{-}(\theta'z)+\taut^{-}(r)\taut^{-}_{-} (A_{h}\rho D_{h} z)=:F_{1}z+F_{2}z,\\
        R_{\Gamma_{1}}z&:=-\taut^{-}(r)N_{\Gamma_{1}}(\rho z)-\Delta t\left(\frac{\tau}{\delta^{3}T^{4}}-\frac{\tau^{2}}{\delta^{4}T^{6}} \right)\mathcal{O}_{\lambda}(1)\taut^{+}(z)+\tau \Delta t\taut^{-}(\theta')\varphi D_{t}z,
    \end{split}
\end{equation*}
and $N_{\Gamma_{1}}(\rho z):=D_{t}(\rho z)-D_{h}\taut^{-}_{+}(\rho z)$.



Taking the $L^2(\mathcal{M}\times \mathcal{N})$-norm in \eqref{eq:ABR} and the $L^2(\mathcal{N})$-norm in \eqref{eq:CDR} and \eqref{eq:EFR} , and adding the respective results, we obtain
\begin{align}
    \label{conjugation:01}
    \begin{split} 
    &\int_{\mathcal{M}\times \mathcal{N}}(|Az|^2+ |Bz|^2)+\int_{\mathcal{N}} (|Cz|^2 + |Dz|^2)+\int_{\mathcal{N}} (|Ez|^2 + |Fz|^2)\\
    &+2\int_{\mathcal{M}\times \mathcal{N}}Az\cdot Bz +2 \int_{\mathcal{N}} Cz\cdot Dz +2 \int_{\mathcal{N}} Ez\cdot Fz\\
    =&\int_{\mathcal{M}\times \mathcal{N}} |Rz|^2 + \int_{\mathcal{N}} |R_{\Gamma_0}z|^2 + \int_{\mathcal{N}} |R_{\Gamma_1}z|^2.
    \end{split}
\end{align}
In the next three steps, we devote to compute the three inner products given in the left-hand side of \eqref{conjugation:01} and to estimate its right-hand side.
\subsection{Estimate for the boundary terms}
This section is devoted to present the estimation for the boundary terms, that is, to give a lower bound for $Ez\cdot Fz$ in $\{1\}\times\mathcal{N}$ and $Cz\cdot Dz$ in $\{0\}\times\mathcal{N}$. It worth mentioning that these terms can be absorb with the boundary terms from the next step. We will use the notation 
$\displaystyle J_{ij}:=\int_{\{0\}\times\mathcal{N}}C_{i}zD_{j}z$, thus $\displaystyle \int_{\{0\}\times\mathcal{N}}Cz\cdot Dz=\sum_{i,j}^{2}J_{ij}$. Similarly, we have that $\displaystyle \int_{\{1\}\times\mathcal{N}}Ez\cdot Fz=\sum_{i,j}^{2}K_{ij}$ with $\displaystyle K_{ij}:=\int_{\{1\}\times\mathcal{N}}E_{i}zF_{j}$. The computation of the integrals $J_{ij}$ are in the Appendix. Combining the estimates from the Lemma \ref{lem:J11}-\ref{lem:J22} it follows that 
\begin{equation}
    \begin{split}
    \int_{\mathcal{N}}Cz\cdot Dz+\int_{\mathcal{N}}Ez\cdot Fz+\geq&-\int_{\partial\mathcal{M}\times\mathcal{N}^{\ast}}(\tilde{\alpha}_{11}^{(1)} +\tilde{\alpha}_{21})|z|^{2} -\int_{\partial\mathcal{M}\times\mathcal{N}}\tilde{\alpha}|D_{t}z|^{2}\\
    &-\int_{\partial\mathcal{M}\times\mathcal{N}^{\ast}}\left(1+s+c_{\lambda}s(sh)^{2}\right)t_{r}(|D_{h}z|^{2})\\
    &-\int_{\partial\mathcal{M}\times\mathcal{N}^{\ast}}(\tilde{\alpha}_{21}+s^{2}\mathcal{O}_{\lambda}(1))t_{r}(|A_{h}z|^{2}),
    \end{split}
\end{equation}
where $\tilde{\alpha}^{(1)}_{11}:=(\tau^{2} T^{2}\theta^{3}+\frac{\tau \Delta t}{\delta^{4}T^{5}})\mathcal{O}_{\lambda}(1)$, $\tilde{\alpha}^{(2)}_{11}:=\Delta t\tau T\theta^{2}\mathcal{O}_{\lambda}(1)$, $\tilde{\alpha}_{21}:=\tau^{2}\theta^{3}\mathcal{O}_{\lambda}(1)$, and $\tilde{\alpha}:=(\Delta t)^{2}\taut^{-}(\tilde{\alpha_{11}(1)})-\Delta\taut^{-}(\tilde{\alpha}_{11}^{(2)})+s^{-1}+c_{\lambda}s^{-1}(sh)^{2}$.

\subsection{Estimate for the right-hand side}

Applying the Young's inequality and the triangular inequality we can bound $R$, defined in \eqref{eq:R}, in $\mathcal{M}\times\mathcal{N}$ as follows

\begin{equation}
\begin{split}
    \int_{\mathcal{M}\times\mathcal{N}}|Rz|^{2}\leq&C_{\lambda}\int_{\mathcal{M}\times\mathcal{N}}(\taut^{-}r)^{2}|\mathcal{P}(\rho z)|^{2}+C_{\lambda}\left[ \left(\frac{\Delta t \tau}{\delta^{3}T^{4}}\right)^{2}+\left(\frac{\Delta t\tau^{2}}{\delta^{4}T^{6}}\right)^{2}\right]\int_{\mathcal{M}\times\partial \mathcal{N}^{+}}|\taut^{+}z|^{2}+ X_{R}+W_{R}, 
\end{split}    
\end{equation}
where
\begin{equation*}
    X_{R}:=C_{\lambda}\int_{\mathcal{M}\times \mathcal{N}^{\ast}}s^{2}|z|^{2}+C_{\lambda}\left[ \left(\frac{\Delta t \tau}{\delta^{3}T^{4}}\right)^{2}+\left(\frac{\Delta t\tau^{2}}{\delta^{4}T^{6}}\right)^{2}\right]\int_{\{0\}\times\mathcal{N}^{\ast}}|z|^{2}
\end{equation*}
and $W_{R}$ is given by
\begin{equation*}
    W_{R}:=C_{\lambda}T^{2}(\tau \Delta t)^{2}\int_{\mathcal{M}\times\mathcal{N}}(\taut^{-}\theta)^{4}|D_{t}z|^{2}+C_{\lambda}\frac{\tau^{2}(\Delta t)^{4}}{\delta^{6}T^{8}}\int_{\mathcal{M}\times\mathcal{N}}|D_{t}z|^{2}.
\end{equation*}
Similarly, for $R_{\Gamma_{0}}$ and $R_{\Gamma_{1}}$ we obtain
\begin{equation}
\begin{split}
    \int_{\mathcal{N}}|R_{\Gamma_{0}}|^{2}+|R_{\Gamma_{1}}z|^{2}\leq&  \int_{\mathcal{N}}(\taut^{-}r)^{2}|N_{\Gamma_{0}}(\rho z)|^{2}+C_{\lambda}\int_{\mathcal{N}}(\taut^{-}r)^{2}|N_{\Gamma_{1}}(\rho z)|^{2}\\
    &+C_{\lambda}\left[\left(\frac{\Delta t \tau}{\delta^{3}T^{4}}\right)^{2}+\left(\frac{\Delta t\tau^{2}}{\delta^{4} T^{6}}\right)^z{2}\right]\int_{\partial\mathcal{N}^{+}}|z|^{2}+X_{R_{\Gamma}}+W_{R_{\Gamma}},
    \end{split}
\end{equation}
where
\begin{equation*}
\begin{split}
    X_{R_{\Gamma}}:=&C_{\lambda}\int_{\partial\mathcal{M}\times \mathcal{N}}(\taut^{-}s)^{2}|\taut^{-}z|^{2}+C_{\lambda}\left[ \left(\frac{\Delta t \tau}{\delta^{3}T^{4}}\right)^{2}+\left(\frac{\Delta t\tau^{2}}{\delta^{4}T^{6}}\right)^{2}\right]\int_{\partial\mathcal{M}\times\mathcal{N}^{\ast}}|z|^{2},
    \end{split}
\end{equation*}
and $W_{R_{\Gamma}}$ is given by
\begin{equation*}
    W_{R_{\Gamma}}:=C_{\lambda}T^{2}(\tau \Delta t)^{2}\int_{\partial\mathcal{M}\times\mathcal{N}}(\taut^{-}\theta)^{4}|D_{t}z|^{2}+C_{\lambda}\frac{\tau^{2}(\Delta t)^{4}}{\delta^{6}T^{8}}\int_{\partial\mathcal{M}\times\mathcal{N}}|D_{t}z|^{2}.
\end{equation*}
\subsection{Step 2: Estimates in $\mathcal{M}\times \mathcal{N}$}

Now, we present an estimation for $Az\cdot Bz$ in $\mathcal{M}\times \mathcal{N}$. To do this, we will use the notation
\begin{equation}\label{eq:I_ij}
    I_{ij}:=\int_{\mathcal{M}\times \mathcal{N}} A_iz B_jz,
\end{equation}
where $A_i$ and $B_j$ are given by \eqref{def:Ai:Bj}, with $i,j=1,2,3$. For sake of presentation, we give estimate of each term from \eqref{eq:I_ij}, and its respective proof, in the Appendix. Collecting the estimates from Lemma \ref{I11}-\ref{I33} it follows that for $\epsilon_{1}(\lambda)$ small enough, and $0<\Delta t\tau^{2}(\delta T^{6})^{-1}\leq \epsilon_{1}(\lambda)$ and $0<\tau h(\delta T^{2})^{-1}\leq\epsilon_{1}(\lambda)$, there exists $C_{\lambda} >0$ independent of $\tau$ and $\Delta t$ such that
\begin{equation}\label{eq:A.B}
    \begin{split}
        2\int_{\mathcal{M}\times\mathcal{N}}Az\cdot Bz\geq&2\tau\lambda^{2}\int_{\mathcal{M}^{\ast}\times\mathcal{N}^{\ast}} \theta(\partial_{x}\psi)^2\phi|D_{h}z|^{2}+ 2\tau^{3}\lambda^{4}\int_{\mathcal{M}\times\mathcal{N}^{\ast}}\theta^{3}\phi^{3}(\partial_{x}\psi)^{4}|z|^{2}\\
    &- 2\tau\lambda\int_{\partial\mathcal{M}\times\mathcal{N}^{\ast}} \theta\partial_{x}\psi\phi t_{r}(|D_{h} z|^2)\,n-2\tau^{3}\lambda^{3}\int_{\partial \mathcal{M}\times\mathcal{N}^{\ast}} (\theta\phi\partial_{x}\psi)^{3}t_{r}(|A_{h}z|^{2})n\\
        &-C_{\lambda}\int_{\mathcal{M}^{\ast}\times\partial\mathcal{N}^{+}}|D_{h}\taut^{+}z|^{2}-\int_{\mathcal{M}\times\partial\mathcal{N}}\mathcal{O}_{\lambda}(1)\taut^{+}(s^{2})|\taut^{+}z|^{2}-2X-2Y\\
         &-\int_{\partial\mathcal{M}\times\mathcal{N}}\taut^{-}(\beta)|D_{t}z|^{2},
    \end{split}
\end{equation}
where $X:=X_{11}+X_{22}+X_{21}+X_{13+23}+X_{31}+X_{32}$, $Y:=Y_{11}+Y_{12}-Y_{21}+Y_{13}+Y_{23}-Y_{31}$ and $\beta:=Cs^{-1}+\left[s^{-1}(sh)^{2}+hs^{-1}(sh)^{2}+s^{-1}(sh)^{4}\right]\mathcal{O}_{\lambda}(1)$.
The difference with respect to the equation (68) from \cite{GC-HS-2021} is the boundary term containing $|D_{t}z|^{2}$ on the spatial boundary, due to the Dirichlet condition considered in that work.\\
Thanks to Lemma 2.12 from \cite{GC-HS-2021}, we could add to \eqref{eq:A.B} a positive term  containing $|A_{h}D_{h}z|^{2}$.
Indeed, using the our notation that Lemma takes the form
\begin{lemma}[{\cite[Lemma 2.12]{GC-HS-2021}}]
Let $h_{1}=h_{1}(\lambda)$ be sufficiently small. Then, for $0<h<h_{1}(\lambda)$, we have
\begin{equation}
    \tau\lambda^{2}\int_{\mathcal{M}^{\ast}\times\mathcal{N}^{\ast}}\theta\phi|D_{h}z|^{2}\geq \tau\lambda^{2}\int_{\mathcal{M}\times\mathcal{N}^{\ast}}\theta \phi|A_{h}D_{h}z|^{2}+H-\tilde{X}-J,
\end{equation}
with $\displaystyle H:=\frac{h^{2}\tau\lambda^{2}}{4}\int_{\mathcal{M}\times{\mathcal{N}^{\ast}}}\theta\phi|D_{h}^{2}z|^{2}, 
        \tilde{X}:=h^{2}\int_{\mathcal{M}^{\ast}\times\mathcal{N}^{\ast}}s\mathcal{O}_{\lambda}(1)|D_{h}z|^{2}+\int_{\mathcal{M}\times\mathcal{N}^{\ast}}sh\mathcal{O}_{\lambda}(1)|A_{h}D_{h}z|^{2}$,
and $\displaystyle J:=h^{4}\int_{\mathcal{M}\times\mathcal{N}^{\ast}}s\mathcal{O}_{\lambda}(1)|D_{h}^{2}z|^{2}$.
\end{lemma}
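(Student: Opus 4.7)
The plan is to derive the inequality from the pointwise identity obtained by applying the product rule of Lemma~\ref{lem:product:rule} to $u = v = D_h z \in C(\mathcal{M}^*)$:
\begin{equation*}
A_h(|D_h z|^2) = |A_h D_h z|^2 + \tfrac{h^2}{4}|D_h^2 z|^2 \quad \text{on } \overline{\mathcal{M}}.
\end{equation*}
Multiplying by the weight $\tau\lambda^2\theta\phi$ (viewed on $\mathcal{M}$) and integrating over $\mathcal{M}\times\mathcal{N}^*$ produces the two target terms $\tau\lambda^2\int_{\mathcal{M}\times\mathcal{N}^*}\theta\phi|A_hD_hz|^2$ and $H$ exactly. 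The remaining task is to compare $\int_{\mathcal{M}^*\times\mathcal{N}^*}\theta\phi|D_hz|^2$ with $\int_{\mathcal{M}\times\mathcal{N}^*}\theta\phi A_h(|D_hz|^2)$ and absorb the discrepancy into $-\tilde{X} - J$.

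First I would transfer the $A_h$ off of $|D_hz|^2$ via the discrete integration by parts formula from Proposition~\ref{pro:integral:space}, yielding
\begin{equation*}
\int_{\mathcal{M}}\theta\phi\,A_h(|D_hz|^2) = \int_{\mathcal{M}^*} |D_hz|^2 A_h(\theta\phi) - \tfrac{h}{2}\int_{\partial\mathcal{M}}\theta\phi\,t_r(|D_hz|^2),
\end{equation*}
and then expand $A_h(\theta\phi) = \theta\phi + h^2\mathcal{O}_\lambda(\phi)$ via a Taylor argument in the spirit of Corollary~\ref{coro:AD}, using that $\phi = e^{\lambda\psi}$ has smooth derivatives of size $\lambda^k\phi$. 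Substituting back, the principal parts cancel and the $h^2$ remainder, after multiplication by $\tau\lambda^2 = (\lambda^2/\theta)\,s$, produces a term of the form $h^2\int_{\mathcal{M}^*\times\mathcal{N}^*} s\mathcal{O}_\lambda(1)|D_hz|^2$, matching the first piece of $\tilde{X}$. The boundary contribution on $\partial \mathcal{M}$ is harmless since the conclusion is only an inequality and it has either the favorable sign or can be absorbed into later boundary estimates in the outer Carleman argument.

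The remaining piece of $\tilde{X}$ (the $sh\mathcal{O}_\lambda(1)|A_hD_hz|^2$ integral) and the full term $J$ arise by applying the product rule $A_h(\theta\phi\cdot|D_hz|^2) = A_h(\theta\phi)A_h(|D_hz|^2) + \tfrac{h^2}{4} D_h(\theta\phi)D_h(|D_hz|^2)$ in combination with the pointwise identity of the first step. The correction $\tfrac{h^2}{4}D_h(\theta\phi)D_h(|D_hz|^2)$ evaluates, via the discrete Leibniz rule for $D_h$, to a cross term of the form $h^2\mathcal{O}_\lambda(\phi)\cdot A_hD_hz\cdot D_h^2 z$; a Young inequality with weight $\epsilon = sh$ then splits this into the desired $sh\mathcal{O}_\lambda(1)|A_hD_hz|^2$ and $h^4 s\mathcal{O}_\lambda(1)|D_h^2z|^2$ contributions, matching $\tilde{X}$ and $J$ respectively. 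The smallness hypothesis $h < h_1(\lambda)$ enters here, ensuring that $sh$ (controlled already by the global constraint $\tau h/(\delta T^2)\leq \epsilon_0$) is small enough after being multiplied by the $\lambda$-dependent constants. The principal obstacle is precisely this bookkeeping of $\lambda$-dependence through the Taylor remainders of $A_h(\theta\phi)$ and $D_h(\theta\phi)$: since each spatial derivative of $\phi$ carries a factor of $\lambda$, one has to verify that the $\mathcal{O}_\lambda(1)$ prefactors in $\tilde{X}$ and $J$ genuinely absorb these without eating into the $\tau\lambda^2$ normalization on the left-hand side.
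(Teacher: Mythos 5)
The paper does not prove this lemma: it is imported verbatim from \cite[Lemma 2.12]{GC-HS-2021}, so there is no in-house proof to compare against. Your argument is nevertheless the natural one and, in its first half, it works. The core is the pointwise identity $A_h(|D_hz|^2)=|A_hD_hz|^2+\tfrac{h^2}{4}|D_h^2z|^2$ from Lemma \ref{lem:product:rule}, followed by the summation-by-parts formula of Proposition \ref{pro:integral:space} to move $A_h$ from $|D_hz|^2$ onto the weight, and the expansion $A_h(\theta\phi)=\theta\phi+h^2\theta\,\mathcal{O}_\lambda(1)$, whose remainder, after multiplication by $\tau\lambda^2=s\lambda^2/\theta$, is exactly the first piece of $\tilde X$.

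Two remarks. First, you hedge on the sign of the boundary term $\tfrac{h}{2}\int_{\partial\mathcal{M}}\theta\phi\,t_r(|D_hz|^2)$, suggesting it could be ``absorbed into later boundary estimates.'' That is not an option here: $\tilde X$ and $J$ contain no boundary integrals, so an unfavorable sign would make the stated inequality false as written. Fortunately the sign is favorable: Proposition \ref{pro:integral:space} gives $\int_{\mathcal{M}^{\ast}}v\,A_h u=\int_{\mathcal{M}}u\,A_h v+\tfrac{h}{2}\int_{\partial\mathcal{M}}u\,t_r(v)$, and with $u=\theta\phi\geq 0$, $v=|D_hz|^2\geq 0$ the boundary contribution sits on the large side and can simply be dropped; you should verify this rather than defer it. Second, the final part of your argument, which tries to manufacture the $sh\,\mathcal{O}_\lambda(1)|A_hD_hz|^2$ piece of $\tilde X$ and the term $J$ out of the correction $\tfrac{h^2}{4}D_h(\theta\phi)D_h(|D_hz|^2)$, is both unnecessary and slightly off: your Young splitting of the cross term $s h^{2}\,\mathcal{O}_\lambda(1)|A_hD_hz|\,|D_h^2z|$ yields $sh|A_hD_hz|^2+sh^{3}|D_h^2z|^2$, and $sh^{3}$ is not dominated by the $sh^{4}$ allowed in $J$. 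This does not damage the proof, because those two error terms are simply not needed: the inequality already holds with them taken to be zero, which is a legitimate instance of $\mathcal{O}_\lambda(1)$. You should either drop that paragraph or repair its exponent bookkeeping.
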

Thus, from \eqref{eq:A.B}, for $\lambda\geq 1$ sufficiently large and for $0<\tau^{2}\Delta t(\delta^{4}T^{6})^{-1}\leq \epsilon_{2}(\lambda)$ and $0<\tau h(\delta T^{2})^{-1}\leq \epsilon_{2}(\lambda)$ small enough, we obtain that
\begin{equation}\label{ine:A.B}
\begin{split}
    &\int_{\mathcal{M}\times\mathcal{N}}(|Az|^{2}+|Bz|^{2})+\tau\int_{\mathcal{M}^{\ast}\times\mathcal{N}^{\ast}} \theta(\partial_{x}\psi)^2\phi|D_{h}z|^{2}+ \tau^{3}\lambda^{4}\int_{\mathcal{M}\times\mathcal{N}^{\ast}}\theta^{3}\phi^{3}(\partial_{x}\psi)^{4}|z|^{2}\\
    &+\tau\lambda^{2}\int_{\mathcal{M}\times\mathcal{N}^{\ast}} \theta(\partial_{x}\psi)^2\phi|A_{h}D_{h}z|^{2}-\tau\lambda\int_{\partial\mathcal{M}\times\mathcal{N}^{\ast}} \theta\partial_{x} \psi\phi t_{r}(|D_{h} z|^2)\,n-\tau^{3}\lambda^{3}\int_{\partial \mathcal{M}\times\mathcal{N}^{\ast}} (\theta\phi\partial_{x}\psi)^{3}t_{r}(|A_{h}z|^{2})n\\
    &\leq  C_{\lambda}\left( \int_{\mathcal{M}\times\mathcal{N}}\tau^{-}(r^{2})|\mathcal{P}(\rho z)|^{2}+\tau\lambda^{2}\int_{\omega^{\ast}\times\mathcal{N}^{\ast}}\theta\phi|D_{h}z|^{2}+\tau^{3}\lambda^{4}\int_{\omega\times\mathcal{N}^{\ast}}\theta^{3}\phi^{3}|z|^{2}\right)\\
    &+C_{\lambda}\left( \int_{\mathcal{M}^{\ast}\times\partial\mathcal{N}^{+}}|D_{h}\taut^{+}z|^{2}+\int_{\mathcal{M}\times\partial\mathcal{N}}\taut^{+}(s^{2})|\taut^{+}z|^{2}+\int_{\partial\mathcal{M}\times\mathcal{N}}\eta  t_{r}(|D_{h}z|^{2})\,n+\int_{\partial\mathcal{M}\times\mathcal{N}}\taut^{-}(\beta)|D_{t}z|^{2}\right)\\
    &+C_{\lambda}\left( \int_{\mathcal{N}}(\taut^{-}r)^{2}|N_{\Gamma_{0}}(\rho z)|^{2}+C_{\lambda}\int_{\mathcal{N}}(\taut^{-}r)^{2}|N_{\Gamma_{1}}(\rho z)|^{2}\right),
\end{split}
\end{equation}
where $\eta:=\left[ s(sh)^{2}+(sh)^{4}\right]\mathcal{O}_{\lambda}(1)$ and $\beta:=Cs^{-1}+\left[s^{-1}(sh)^{2}+hs^{-1}(sh)^{2}+s^{-1}(sh)^{4}\right]\mathcal{O}_{\lambda}(1)$, and we have used that $|\partial_{x}\psi|\geq C> 0$ in $\Omega\setminus \omega$. 

We note that, in particular, considering $z=0$ on $\partial\mathcal{M}$ in \eqref{ine:A.B} we recover the inequality (75) from \cite{GC-HS-2021}.

\subsection{Adding the term of $D_{t}$ and $D_{h}^{2}$}
Firstly, on the boundary we will add the term with $|D_{t}z|^{2}$. From the definition of $Ez$ we have that $D_{t}z=Ez+\taut^{-}(r)\taut^{-}_{-}(D_{h}\rho A_{h}z)$ on $\{1\}\times \mathcal{N}$, then taking $L^{2}(\mathcal{N})-$norm, and considering the Proposition \ref{pro:space:estimate}, it follows that
\begin{equation*}
    \int_{\{1\}\times\mathcal{N}}|D_{t}z|^{2}\leq \int_{\{1 \}\times\mathcal{N}}|Ez|^{2}+C_{\lambda}\int_{\{1\}\times\mathcal{N}}\taut^{-}(s^{2})|\taut_{-}^{-}A_{h}z|^{2}.
\end{equation*}
Similarly, using the definition of $Cz$, for the left boundary we have that 
\begin{equation*}
    \int_{\{0\}\times\mathcal{N}}|D_{t}z|^{2}\leq \int_{\{0\}\times\mathcal{N}}|Cz|^{2}+C_{\lambda}\int_{\{0\}\times\mathcal{N}}\taut^{-}(s^{2})|\taut_{+}^{-}A_{h}z|^{2}.
\end{equation*}
Collecting the above inequalities, and shifting the discrete time variable on the integrals with the term $|A_{h}|$, we obtain
\begin{equation}\label{ine:Dt:boundary}
    \int_{\partial\mathcal{M}\times\mathcal{N}}|D_{t}z|^{2}\leq \int_{\{0\}\times\mathcal{N}}|Cz|^{2}+\int_{\{1\} \times\mathcal{N}}|Ez|^{2}+C_{\lambda}\int_{\partial\mathcal{M}\times\mathcal{N}^{\ast}}s^{2}t_{r}(|A_{h}z|^{2}).
\end{equation}\\
Secondly, in $\mathcal{M}\times\mathcal{N}$, from \eqref{def:Ai:Bj} and \eqref{def:A:B} we have that
\begin{equation*}
    \taut^{-}(rA^{2}_{h}\rho D_{h}^{2}z)=Az-\taut^{-}(rD_{h}^{2}\rho A_{h}^{2}z)+\tau \varphi\taut^{-}(\theta'z).
\end{equation*}
Thus, using Proposition \ref{pro:space:estimate}, Lemma \ref{lem:product:rule} and $|\theta'|\leq CT\theta^{2}$ it follows that the above equation can be estimated as
\begin{equation}\label{eq:Dx2:z}
    D_{h}^{2}\taut^{-}z=Az+\mathcal{O}_{\lambda}(1)\taut^{-}(sh)^{2}D_{h}^{2}\taut^{-}z+\mathcal{O}_{\lambda}(1)\taut^{-}(s^{2}z)+\mathcal{O}_{\lambda}(1)\taut^{-}(sh)^{2}D_{h}^{2}\taut^{-}z +\mathcal{O}_{\lambda}(1)\tau T\taut^{-}(s^{2}z).
\end{equation}
Multiplying \eqref{eq:Dx2:z} by $\taut^{-}s^{-1/2}$, and then taking the $L^{2}(\mathcal{M}\times\mathcal{N})$-norm we obtain, followed by a shift with respect to the time discrete variable,
\begin{equation*}
    \begin{split}
        \int_{\mathcal{M}\times\mathcal{N}^{\ast}}(s^{-1})|D_{h}^{2} z|^{2}\leq & C_{\lambda}\left(\int_{\mathcal{M}\times\mathcal{N}}\taut^{-}(s^{-1})|Az|^{2}+\int_{\mathcal{M}\times\mathcal{N}^{\ast}}s^{-1}(sh)^{4}|D_{h}^{2}z|^{2}\right)\\
        +&C_{\lambda}\left( \int_{\mathcal{M}\times\mathcal{N}^{\ast}}s^{3}|z|^{2}+\int_{\mathcal{M}\times\mathcal{N}^{\ast}}\tau T^{2}\theta^{3}|z|^{2}\right).
    \end{split}
\end{equation*}
Increasing if necessary the value $\tau$ such that $\tau\geq 1$ and $s(t)\geq 1$ for any $t$, we get from the above inequality 
\begin{equation}\label{ine:Dx}
    \begin{split}
        \int_{\mathcal{M}\times\mathcal{N}^{\ast}}s^{-1}|D_{h}^{2} z|^{2}\leq & C_{\lambda}\left(\int_{\mathcal{M}\times\mathcal{N}}|Az|^{2}+ \int_{\mathcal{M}\times\mathcal{N}^{\ast}}s^{3}|z|^{2}\right).
    \end{split}
\end{equation}
Similarly, using the definition of $Bz$ and Proposition \ref{pro:space:estimate} it follows that
\begin{equation}\label{ine:Dt}
    \begin{split}
        \int_{\mathcal{M}\times\mathcal{N}}\taut^{-}s^{-1}|D_{t}z|^{2}\leq C_{\lambda}\left( \int_{\mathcal{M}\times\mathcal{N}}|Bz|^{2}+\int_{\mathcal{M}\times\mathcal{N}^{\ast}}s|A_{h}D_{h}z|^{2}+\int_{\mathcal{M}\times\mathcal{N}^{\ast}}s^{2}|z|^{2}\right).
        \end{split}
\end{equation}
Therefore, combining \eqref{ine:A.B}, \eqref{ine:Dx}, \eqref{ine:Dt} and \eqref{ine:Dt:boundary} we obtain
\begin{equation}\label{ine:C.A:interior}
\begin{split}
    &\iint\limits_{\mathcal{M}\times\mathcal{N}}\taut^{-}(s^{-1})|D_ {t}z|^{2}+\iint\limits_{\mathcal{M}\times\mathcal{N}^{\ast}}s ^{-1}|D_{h}^{2}z|^{2}+\int_{\mathcal{M}^{\ast}\times\mathcal{N}^{\ast}} s|D_{h}z|^{2}+ \int_{\mathcal{M}\times\mathcal{N}^{\ast}}s^{3}|z|^{2}\\
    &+\int_{\mathcal{M}\times\mathcal{N}^{\ast}} s|A_{h}D_{h}z|^{2}+\int_{\partial\mathcal{M}\times\mathcal{N}^{\ast}}st_{r}(|D_{h} z|^2)+\int_{\partial \mathcal{M}\times\mathcal{N}^{\ast}}s^{3}t_{r}(|A_{h}z|^{2})+\int_{\partial \mathcal{M}\times \mathcal{N}}|D_{t}z|^{2}\\
    \leq & C_{\lambda}\left( \int_{\mathcal{M}\times\mathcal{N}}\tau^{-}(r^{2})|\mathcal{P}(\rho z)|^{2}+\tau\lambda^{2}\int_{\omega^{\ast}\times\mathcal{N}^{\ast}}\tau^{-}(\theta)\phi|D_{h}z|^{2}+\tau^{3}\lambda^{4}\int_{\omega\times\mathcal{N}^{\ast}}\taut^{-}(\theta^{3})\phi^{3}|z|^{2}\right)\\
    &+C_{\lambda}\left(\hspace{0.2cm} \iint\limits_{\mathcal{M}^{\ast}\times\partial\mathcal{N}^{+}}\hspace{-0.2cm}|D_{h}\taut^{+}z|^{2}+\int\limits_{\mathcal{M}\times\partial\mathcal{N}}\taut^{+}(s^{2})|\taut^{+}z|^{2} \int_{\mathcal{N}}(\taut^{-}r)^{2}|N_{\Gamma_{0}}(\rho z)|^{2}+C_{\lambda}\int_{\mathcal{N}}(\taut^{-}r)^{2}|N_{\Gamma_{1}}(\rho z)|^{2}\right).
\end{split}
\end{equation}
At this stage, we will remove the local term $|D_{h}z|^{2}$ from the right-hand side by increasing the size of the observation set. This is done in \cite[Lemma 2.16]{GC-HS-2021}, in our notation takes the following form.
\begin{lemma}[{\cite[Lemma 2.16]{GC-HS-2021}}]
For any $\gamma>0$, there exists $C>0$ uniform with respect to $h$ and $\Delta t$ such that
\begin{equation}\label{eq:lem:2.16}
\begin{split}
    \int_{\omega^{\ast}\times\mathcal{N}^{\ast}}s|D_{h}z|^{2}\leq &C\left( 1+\frac{1}{\gamma}\right)\int_{\mathcal{B}^{\ast}\times\mathcal{N}^{\ast}}s^{3}|z|^{2}+C\int_{\mathcal{M}\times\mathcal{N}^{\ast}}|D_{h}A_{h}z|^{2}+\int_{\mathcal{M}\times\mathcal{N}^{\ast}}s|z|^{2}\\
    &+\gamma\int_{\mathcal{M}^{\ast}\times\mathcal{N}^{\ast}}s^{-1}|D_{h}z|^{2}+\int_{\mathcal{M}^{\ast}\times\mathcal{N}^{\ast}}s^{-1}(sh)^{2}|D_{h}z|^{2}.
    \end{split}
\end{equation}
\end{lemma}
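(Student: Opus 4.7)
The plan is a discrete localization argument in the spirit of the classical Carleman-type absorption lemma: introduce a cutoff $\eta$ that is $1$ on $\omega^{*}$ and supported in $\mathcal{B}^{*}$, then trade $|D_{h}z|^{2}$ for $|z|^{2}$ via discrete integration by parts (Proposition \ref{pro:integral:space}) combined with the discrete Leibniz rule (Lemma \ref{lem:product:rule}), balancing the resulting cross terms with Young's inequality tuned by $\gamma$.

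First, I pick a smooth $\eta \in C^{\infty}(\overline{\Omega})$ with $0\le \eta \le 1$, $\eta \equiv 1$ on a neighborhood of $\omega$, and $\mathrm{supp}(\eta)\Subset \mathcal{B}$, and sample it on the mesh. Since $s(t)=\tau\theta(t)$ is spatially constant, one has $D_{h}(s\eta)=sD_{h}\eta$ and $A_{h}(s\eta)=sA_{h}\eta$ with both bounded by $Cs$ on the support. Then
\[
\int_{\omega^{*}\times\mathcal{N}^{*}} s|D_{h}z|^{2} \;\le\; \int_{\mathcal{B}^{*}\times\mathcal{N}^{*}} s\,\eta\,|D_{h}z|^{2},
\]
and a discrete integration by parts that transfers one difference operator — after writing $s\eta|D_{h}z|^{2}=D_{h}z\cdot (s\eta D_{h}z)$ and expanding $D_{h}(s\eta D_{h}z)$ via Lemma \ref{lem:product:rule} — yields, with vanishing boundary contributions thanks to $\mathrm{supp}(\eta)\Subset \mathcal{B}$,
\[
\int_{\mathcal{B}^{*}\times\mathcal{N}^{*}} s\eta |D_{h}z|^{2} = -\int_{\mathcal{M}\times\mathcal{N}^{*}} z\,\bigl[\,D_{h}(s\eta)\,A_{h}D_{h}z + A_{h}(s\eta)\,D_{h}^{2}z\,\bigr].
\]

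I would then estimate the first integral by Young's inequality in the form $|sz|\,|A_{h}D_{h}z|\le \tfrac{1}{2\gamma}s^{3}|z|^{2}+\tfrac{\gamma}{2}s^{-1}|A_{h}D_{h}z|^{2}$, producing the $C(1+\gamma^{-1})\int_{\mathcal{B}^{*}} s^{3}|z|^{2}$ contribution and, after using $A_{h}D_{h}=D_{h}A_{h}$ on uniform meshes together with $s^{-1}\le 1$, the $C\int |D_{h}A_{h}z|^{2}$ contribution. For the second integral, a further discrete integration by parts on $z\,A_{h}(s\eta)\,D_{h}^{2}z$ returns a quadratic form in $D_{h}z$ that partially reabsorbs into the left-hand side (producing the $\gamma\int s^{-1}|D_{h}z|^{2}$ term after another Young split), and generates the auxiliary $\int s|z|^{2}$ contribution through the lower-order factor $D_{h}(A_{h}(s\eta))$.

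The main obstacle is the second application of the Leibniz identity, since it cannot be combined consistently with the mesh shifts without generating remainders of the form $(h/2)^{2}D_{h}u\,D_{h}v$ (compare the second identity in Lemma \ref{lem:product:rule} for the average operator). Tracking these remainders carefully, each carries a factor $(sh)^{2}$ relative to the leading terms, and the net contribution from applying this mechanism twice to the product $s\eta |D_{h}z|^{2}$ is precisely $\int s^{-1}(sh)^{2}|D_{h}z|^{2}$ on $\mathcal{M}^{*}\times\mathcal{N}^{*}$ — the last term of \eqref{eq:lem:2.16}. Keeping the Young parameter $\gamma$ free is exactly what allows the later absorption of $\gamma\int s^{-1}|D_{h}z|^{2}$ into the main Carleman estimate \eqref{ine:C.A:interior}, which is the ultimate purpose of the lemma.
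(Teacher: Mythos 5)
The paper does not prove this lemma: it is imported verbatim from \cite[Lemma 2.16]{GC-HS-2021}, so there is no in-paper argument to compare yours against, and I can only assess your proposal on its own terms. It has a genuine gap.

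The problem is your treatment of the second term, $-\int_{\mathcal{M}}z\,A_{h}(s\eta)\,D_{h}^{2}z$. You propose to integrate by parts once more and to ``partially reabsorb'' the resulting quadratic form in $D_{h}z$ into the left-hand side. Carrying this out with Proposition \ref{pro:integral:space} and Lemma \ref{lem:product:rule} gives
\begin{equation*}
-\int_{\mathcal{M}}z\,A_{h}(s\eta)\,D_{h}^{2}z=\int_{\mathcal{M}^{\ast}}A_{h}^{2}(s\eta)\,|D_{h}z|^{2}+\int_{\mathcal{M}^{\ast}}D_{h}A_{h}(s\eta)\,A_{h}z\,D_{h}z,
\end{equation*}
and since $A_{h}^{2}(s\eta)=s\eta+s\,\mathcal{O}(h^{2})$, the first integral on the right is exactly the quantity $\int_{\mathcal{M}^{\ast}}s\eta|D_{h}z|^{2}$ you started from, up to an $\mathcal{O}((sh)^{2})$ correction, and it reappears with coefficient $+1$. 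Substituting back into your first identity therefore cancels the left-hand side completely and leaves only the trivial relation $0=(\text{lower-order terms})$: the second integration by parts undoes the first, and no upper bound for $\int_{\omega^{\ast}\times\mathcal{N}^{\ast}}s|D_{h}z|^{2}$ is obtained. Absorption would require the reappearing term to carry a coefficient strictly smaller than one, which it does not. Note also that the obvious repair --- estimating $-\int z\,A_{h}(s\eta)\,D_{h}^{2}z$ directly by Young's inequality instead of integrating by parts again --- produces $\gamma\int_{\mathcal{M}\times\mathcal{N}^{\ast}}s^{-1}|D_{h}^{2}z|^{2}$ on the right, a second-order term that does not appear among the terms of \eqref{eq:lem:2.16} (it would still be absorbable into \eqref{ine:C.A:interior}, but it does not prove the lemma as stated). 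So some further idea beyond repeated integration by parts, which is what the proof in \cite{GC-HS-2021} supplies, is needed to land on the stated right-hand side. Your handling of the first cross term $-\int z\,D_{h}(s\eta)\,A_{h}D_{h}z$, yielding the $(1+1/\gamma)\int s^{3}|z|^{2}$ and $\int|D_{h}A_{h}z|^{2}$ contributions, is correct.
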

Moreover, the time boundary terms can be estimates as follows
\begin{equation*}
    \int_{\mathcal{M}\times\partial\mathcal{N}}s^{2}|\taut^{+}z|^{2}\leq 4h^{-2}\int_{\mathcal{M}\times\partial\mathcal{N}}\left( \frac{\tau h}{\delta T^{2}}\right)^{2}|\taut^{+}z|^{2}\leq 4\epsilon_{6}^{2}h^{-2}\int_{\mathcal{M}\times\partial\mathcal{N}}|\taut^{+}z|^{2},
\end{equation*}
since $\Delta t\leq \delta T/2$ and therefore $\max_{t\in[0,T+\Delta t]}\theta(t)\leq 2/(\delta T^{2})$. For the spatial discrete derivative on the time discrete boundary, we note that $|D_{h}z|^{2}\leq Ch^{-2}\left( |\taut_{+}z|^{2}+|\taut_{-}z|^{2}\right)$. Then 
\begin{equation*}
    \int_{\mathcal{M}^{\ast}\times\partial\mathcal{N}}|D_{h}\taut^{+}z|^{2}\leq Ch^{-2}\int_{\mathcal{\overline{M}}\times\partial\mathcal{N}}|\taut^{+}z|^{2}.
\end{equation*}
Thus, we obtain
\begin{equation}\label{eq:boundary:time:Dx}
\int_{\mathcal{M}\times\partial\mathcal{N}}s^{2}|\taut^{+}z|^{2}+\int_{\mathcal{M}^{\ast}\times\partial\mathcal{N} }|D_{h}\taut^{+}z|^{2}\leq \left(1+4\epsilon_{6}^{2} \right)h^{-2}\int_{\mathcal{M}\times\times\mathcal{N}}|\taut^{+}z|^{2}+Ch^{-2}\int_{\partial\mathcal{M}\times\partial\mathcal{N}}|\taut^{+}z|^{2}.
\end{equation}
\par Therefore, combining \eqref{ine:C.A:interior}, \eqref{eq:lem:2.16} and \eqref{eq:boundary:time:Dx} we obtain
\begin{equation}\label{ine:A.B:dual:variable}
\begin{split}
    &\int_{\mathcal{M}\times\mathcal{N}}\taut^{-}(s^{-1})|D_ {t}z|^{2}+\int_{\mathcal{M}\times\mathcal{N}^{\ast}}s ^{-1}|D_{h}^{2}z|^{2}+\int_{\mathcal{M}^{\ast}\times\mathcal{N}^{\ast}} s|D_{h}z|^{2}+ \int_{\mathcal{M}\times\mathcal{N}^{\ast}}s^{3}|z|^{2}\\
    &+\int_{\mathcal{M}\times\mathcal{N}^{\ast}} s|A_{h}D_{h}z|^{2}+\int_{\partial\mathcal{M}\times\mathcal{N}^{\ast}}st_{r}(|D_{}h z|^2)+\int_{\partial \mathcal{M}\times\mathcal{N}^{\ast}}s^{3}t_{r}(|A_{h}z|^{2})+\int_{\partial \mathcal{M}\times \mathcal{N}}|D_{t}z|^{2}\\
    \leq & C_{\lambda_{1}}\left( \int_{\mathcal{M}\times\mathcal{N}}\tau^{-}(r^{2})|\mathcal{P}(\rho z)|^{2}+\int_{\mathcal{B}^{\ast}\times\mathcal{N}^{\ast}}s^{3}|z|^{2}+ h^{-2}\int_{\partial\mathcal{M}\times\partial\mathcal{N}}|\taut^{+}z|^{2}+h^{-2}\int_{\mathcal{M}\times\partial\mathcal{N}}|\taut^{+}z|^{2}\right)\\
    &+C_{\lambda_{1}}\left(  \int_{\mathcal{N}}(\taut^{-}r)^{2}|N_{\Gamma_{0}}(\rho z)|^{2}+C_{\lambda}\int_{\mathcal{N}}(\taut^{-}r)^{2}|N_{\Gamma_{1}}(\rho z)|^{2}\right),
\end{split}
\end{equation}
where we have chosen $\gamma=\frac{1}{2C_{\lambda_{1}}}$ and $\tau\geq \tau_{2}(T+T^{2})$ such that the extra terms from \eqref{eq:lem:2.16} can be absorbed. \\
\subsection{Returning to the original variable}
Finally, using the following Lemma we can return to the original variable
\begin{lemma}\label{lem:return:variable}
For $\Delta t\tau (T^{3}\delta^{2})^{-1}\leq 1$ and $\tau h(\delta T^{2})^{-1}\leq 1$ we have the following estimates for the term $|D_{h}q|$, $|A_{h}D_{h}q|$, $|D_{h}^{2}q|$ and $|D_{t}q|$ respectively
\begin{equation*}\label{eq:return:Dx}
        \int_{\mathcal{M}^{\ast}\times\mathcal{N}^{\ast}}r^{2}s|D_{h}q|^{2}\leq C_{\lambda}\left( \int_{\mathcal{M}^{\ast}\times\mathcal{N}^{\ast}}s^{3}|z|^{2}+\int_{\partial \mathcal{M}\times\mathcal{N}^{\ast}}s^{2}sh|z|^{2}+\int_{\mathcal{M}^{\ast}\times\mathcal{N}^{\ast}}s|D_{h}z|^{2}\right),
\end{equation*}
\begin{equation*}\label{eq:return:AxDx}
\begin{split}
    \int_{\mathcal{M}\times\mathcal{N}^{\ast}}sr^{2}|A_{h}D_{h}q|^{2}\leq&C_{\lambda} \left( \int_{\mathcal{M}\times\mathcal{N}^{\ast}}s^{3}|z|^{2}+\int_{\mathcal{M}\times\mathcal{N}^{\ast}}s|A_{h}D_{h}z|^{2}\right)\\
    &+C_{\lambda}\left( \int_{\mathcal{M}\times\mathcal{N}^{\ast}}s^{-1}(sh)^{4}|D_{h}z|^{2}+\int_{\mathcal{M}\times\mathcal{N}^{\ast}}s(sh)^{4}|A_{h}D_{h}z|^{2}\right),
    \end{split}
\end{equation*}
\begin{equation*}\label{eq:return:DxDx}
\begin{split}
    \int_{\mathcal{M}\times\mathcal{N}^{\ast}}r^{2}s^{-1}|D_{h}^{2}q|^{2}\leq &C_{\lambda}\left( \int_{\mathcal{M}\times\mathcal{N}^{\ast}}s^{3}|z|^{2}+\int_{\mathcal{M}\times\mathcal{N}^{\ast}}s^{-1}|D_{h}^{2}z|^{2}+\int_{\mathcal{M}\times\mathcal{N}^{\ast}}s|A_{h}D_{h}z|^{2}\right)\\
    &+C_{\lambda}\frac{h^{4}}{\delta^{4}T^{8}}\left( \int_{\mathcal{M}\times\mathcal{N}^{\ast}}s^{-1}|D_{h}^{2}z|^{2}\right),
    \end{split}
\end{equation*}
\begin{equation*}\label{eq:return:Dt}
    \int_{\mathcal{M}\times\mathcal{N}}\taut^{-}(r^{2}s^{-1})|D_{t}q|^{2}\leq 2\int_{\mathcal{M}\times\mathcal{N}}\taut^{-} r^{2}|\mathcal{P}q|^{2}+2\int_{\mathcal{M}\times\mathcal{N}}\taut^{-}r^{2}s^{-1}|D_{h}^{2}\taut^{-}q|^{2}.
\end{equation*}
Additionally, for the spatial boundary terms we have
\begin{equation*}\label{eq:return:Ax}
    \int_{\partial \mathcal{M}\times\mathcal{N}^{\ast}}s^{3}r^{2}t_{r}(A_{h}|q|^{2})\leq C_{\lambda}\int_{\partial\mathcal{M}\times\mathcal{N}^{\ast}}t_{r}(|A_{h}z|^{2})+C_{\lambda}\int_{\partial\mathcal{M}\times\mathcal{N}^{\ast}}st_{r}(|D_{h}z|^{2}),
\end{equation*}
\begin{equation*}\label{eq:return:Dx:boundary}
    \int_{\partial\mathcal{M}\times\mathcal{N}^{\ast}}sr^{2}t_{r}(|D_{h}q|^{2})\leq C_{\lambda}\left(\int_{\partial\mathcal{M}\times\mathcal{N}^{\ast}}s^{3}t_{r}(|A_{h}z|^{2})+\int_{\partial\mathcal{M}\times\mathcal{N}^{\ast}}st_{r}(|D_{h}z|^{2})\right),
\end{equation*}
and
\begin{equation*}\label{eq:return:boundary}
    \int_{\partial\mathcal{M}\times\mathcal{N}}\taut^{-}(r^{2})|D_{t}q|^{2}\leq C_{\lambda}\int_{\partial\mathcal{M}\times\mathcal{N}^{\ast}}st_{r}(|D_{h}z|^{2})+C_{\lambda}\int_{\partial\mathcal{M}\times\mathcal{N}^{\ast}}s^{3}|A_{h}z|^{2}.
\end{equation*}
\end{lemma}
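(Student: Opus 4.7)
The strategy is uniform across all seven inequalities: since $q=\rho z$, I would expand every discrete operator applied to $q$ using the discrete Leibniz identities of Lemma \ref{lem:product:rule}, multiply through by the appropriate power of $r$, and then invoke the weight-function calculus (Proposition \ref{pro:space:estimate}, Lemma \ref{lem:space:estimate}, Theorem \ref{teo:space:estimate}) to control each factor of the form $rA_h^mD_h^n(\partial_x^\alpha\rho)$ by $s^{n+\alpha}\mathcal{O}_\lambda(1)$, up to an error of order $(sh)^2$. The standing hypothesis $\tau h(\delta T^2)^{-1}\le 1$ is exactly what makes those asymptotic identities available, and the hypothesis $\Delta t\tau(T^3\delta^2)^{-1}\le 1$ plays the analogous role when time derivatives intervene.

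For the three interior estimates, I would first write $D_h(\rho z)=(D_h\rho)A_hz+(A_h\rho)D_hz$, multiply by $r$, square, weight by $s$, and use $rD_h\rho=s\mathcal{O}_\lambda(1)$ and $rA_h\rho=\mathcal{O}_\lambda(1)$ with Young's inequality; the trace term on $\partial\mathcal{M}$ comes out of the fact that $A_hz$ at a boundary node of $\mathcal{M}^\ast$ involves the boundary value of $z$, producing the $\int_{\partial\mathcal{M}}s^2(sh)|z|^2$ contribution. The estimate for $A_hD_hq$ is obtained by one further application of Lemma \ref{lem:product:rule} to $A_hD_h(\rho z)$, Theorem \ref{teo:space:estimate} supplying estimates for the resulting mixed operators acting on $\rho$; the $(sh)^4$ factors arise from composing two Leibniz expansions. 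For $D_h^2q$ I would expand
\begin{equation*}
D_h^2(\rho z)=D_h^2\rho\cdot A_h^2z+2D_hA_h\rho\cdot D_hA_hz+A_h^2\rho\cdot D_h^2z,
\end{equation*}
multiply by $r$, and use Theorem \ref{teo:space:estimate} to write each coefficient as $s^{k}\mathcal{O}_\lambda(1)$ for the appropriate $k$; the $h^4(\delta^4T^8)^{-1}$ factor comes from the residual error term that cannot be absorbed into $\mathcal{O}_\lambda(1)$ without invoking the smallness of $sh$.

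The time-derivative estimate is the cleanest: it uses only the definition $\mathcal{P}q=-D_tq-D_h^2\taut^-q$, which rearranged gives $|D_tq|^2\le 2|\mathcal{P}q|^2+2|D_h^2\taut^-q|^2$; multiplying by $\taut^-(r^2s^{-1})$ and integrating over $\mathcal{M}\times\mathcal{N}$ yields the stated bound directly, with no further expansion. For the two boundary trace estimates I would apply Lemma \ref{lem:product:rule} in its average-operator form $A_h(\rho z\cdot\rho z)=A_h(\rho^2)A_h(z^2)+(h^2/4)D_h(\rho^2)D_h(z^2)$, combined with $r^2A_h(\rho^2)=\mathcal{O}_\lambda(1)$ and $r^2D_h(\rho^2)=s\mathcal{O}_\lambda(1)$, to convert the weighted trace of $A_h(|q|^2)$ into traces of $|A_hz|^2$ and $|D_hz|^2$; the $|D_hq|^2$ trace is handled analogously by expanding $D_h(\rho z)$ at the dual point adjacent to the boundary and applying Young's inequality. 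The last boundary estimate uses the dynamic boundary equations of \eqref{discrete primal system} themselves, which express $D_tq$ at $x=0,1$ as a linear combination of $D_h\taut_{\pm}^-q$ and $\taut^-q$, so the same Leibniz-plus-weight-calculus recipe applies.

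The main obstacle will be bookkeeping: each Leibniz expansion produces several residual terms (involving $(sh)^2$, $(sh)^4$, or powers of $\tau h/\delta T^2$), and one must verify that every such term either contributes one of the stated summands on the right-hand side or is dominated by one of them under the working assumptions. The boundary terms require the most care, because the trace operator $t_r$ selects only one of the two neighboring values of a function on the dual mesh, so the expansions of $A_h$ and $D_h$ near $\partial\mathcal{M}$ must be carried out without appealing to the symmetry available in the interior.
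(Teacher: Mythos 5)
Your plan for the four interior estimates and the two spatial trace estimates is essentially the paper's proof: expand $D_hq$, $A_hD_hq$ and $D_h^2q$ via Lemma \ref{lem:product:rule}, multiply by the appropriate power of $r$ and $s$, and invoke Proposition \ref{pro:space:estimate} and Theorem \ref{teo:space:estimate} together with Young's inequality. One point of precision: the term $\int_{\partial\mathcal{M}\times\mathcal{N}^{\ast}}s^{2}(sh)|z|^{2}$ in the first estimate is produced by bounding $|A_hz|^2\le A_h(|z|^2)$ and then integrating the average operator by parts via Proposition \ref{pro:integral:space}, which is the precise mechanism behind your remark that $A_hz$ sees the boundary values of $z$; the explicit factor $h$ comes from the $-\tfrac{h}{2}\int_{\partial\mathcal{M}}$ term of that formula. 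The $D_tq$ estimate is, as you say, just the identity $-D_tq=\mathcal{P}q+D_h^2\taut^-q$ combined with $s^{-1}\le 1$. For the trace of $A_h(|q|^2)$ the paper first writes $A_h(|q|^2)=|A_hq|^2+\frac{h^2}{4}|D_hq|^2$ and then expands $A_hq$ and $D_hq$; your decomposition $A_h(\rho^2z^2)=A_h(\rho^2)A_h(z^2)+\frac{h^2}{4}D_h(\rho^2)D_h(z^2)$ is a legitimate variant, provided you observe that $r^2A_h(\rho^2)=\mathcal{O}_\lambda(1)$ and $r^2D_h(\rho^2)=s\,\mathcal{O}_\lambda(1)$ themselves require one further Leibniz expansion before Proposition \ref{pro:space:estimate} applies, since the weight calculus is stated for operators acting on $\rho$, not on $\rho^2$.

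The genuine gap is in the last estimate, for $\int_{\partial\mathcal{M}\times\mathcal{N}}\taut^{-}(r^{2})|D_{t}q|^{2}$. You propose to substitute the dynamic boundary equations of \eqref{discrete primal system} to express $D_tq$ at $x=0,1$ in terms of $D_h\taut^{-}_{\pm}q$ and $\taut^{-}q$. But Lemma \ref{lem:return:variable} is used inside the proof of Theorem \ref{theo:discrete:carleman}, which is a Carleman estimate for an arbitrary $q$; this is exactly why the terms $(\taut^{-}r)^{2}|N_{\Gamma_{0}}q|^{2}$ and $(\taut^{-}r)^{2}|N_{\Gamma_{1}}q|^{2}$ are kept on the right-hand side of \eqref{eq:discrete:carleman}. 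At this stage $q$ does not satisfy the boundary equations, so the substitution is not available; and even restricted to solutions it would smuggle the potentials $b_{\Gamma_0},b_{\Gamma_1}$ into the constant $C_\lambda$, which must be independent of them. The paper instead applies the time-discrete Leibniz rule to $q=\rho z$ on $\partial\mathcal{M}$, writing $\taut^{-}(r)D_{t}q=\taut^{-}(r)D_{t}\rho\,\taut^{+}z+D_{t}z$, and controls $\taut^{-}(r)D_{t}\rho$ with Lemma \ref{lem:time:estimate} under the hypothesis $\Delta t\,\tau(T^{3}\delta^{2})^{-1}\le 1$, before converting the resulting boundary terms in $|\taut^{+}z|^{2}$ and $|D_{t}z|^{2}$. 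Your recipe for this one inequality should be replaced by that conjugation argument.
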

Combining the inequalities from the Lemma above and \eqref{ine:A.B:dual:variable} we obtain
\begin{equation*}
\begin{split}
    &\int_{\mathcal{M}\times\mathcal{N}}\taut^{-}(s^{-1})|D_ {t}q|^{2}+\int_{\mathcal{M}\times\mathcal{N}^{\ast}}s ^{-1}|D_{h}^{2}q|^{2}+\int_{\mathcal{M}^{\ast}\times\mathcal{N}^{\ast}} s|D_{h}q|^{2}+ \int_{\mathcal{M}\times\mathcal{N}^{\ast}}s^{3}|q|^{2}\\
    &+\int_{\mathcal{M}^{\ast}\times\mathcal{N}^{\ast}} s|A_{h}D_{h}q|^{2}+\int_{\partial\mathcal{M}\times\mathcal{N}^{\ast}}st_{r}(|D_{h} q|^2)+\int_{\partial \mathcal{M}\times\mathcal{N}^{\ast}}s^{3}t_{r}(A_{h}(|q|^{2}))+\int_{\partial \mathcal{M}\times \mathcal{N}}|D_{t}q|^{2}\\
    \leq & C_{\lambda_{1}}\left( \int_{\mathcal{M}\times\mathcal{N}}\tau^{-}(r^{2})|\mathcal{P}q|^{2}+\int_{\mathcal{B}\times\mathcal{N}^{\ast}}s^{3}r^{2}|q|^{2}+ h^{-2}\int_{\partial\mathcal{M}\times\partial\mathcal{N}}|\taut^{+}rq|^{2}+h^{-2}\int_{\mathcal{M}\times\partial\mathcal{N}}|\taut^{+}rq^{2}\right)\\
    &+C_{\lambda_{1}}\left(  \int_{\mathcal{N}}(\taut^{-}r)^{2}|N_{\Gamma_{0}}q|^{2}+C_{\lambda}\int_{\mathcal{N}}(\taut^{-}r)^{2}|N_{\Gamma_{1}}q|^{2}\right),
\end{split}
\end{equation*}
for all $\tau\geq \tau_{0}(T+T^{2})$, $h\leq h_{0}$, $\displaystyle \frac{\tau h}{\delta T^{2}}\leq \epsilon_{0}$ and  $\displaystyle \frac{\tau^{4}\Delta t }{\delta^{4}T^{6}}\leq \epsilon_{0}$, which completes the proof of Theorem \ref{theo:discrete:carleman}.

\section{$\phi(h)$-null controllability}\label{sec:obs}
\subsection{Proof of Theorem \ref{observ-SZ}}
In this section, we prove the relaxed observability inequality \eqref{ine:discrete:obs} for the adjoint system \eqref{discrete primal system}, that is
\begin{equation}\label{system:adjoint}
    \begin{cases}
    -D_t q-D_{h}^2 \taut^- q+b\taut^- (q)=0,& (x,t)\in \mathcal{M}\times \mathcal{N},\\
    -D_t q(0,t)-D_{h} \taut^{-}_{+} q(0,t)+b_{\Gamma_0}(t)\taut^- q(0,t)=0,& t\in \mathcal{N},\\
    -D_t q(1,t)+D_{h} \taut^{-}_{-} q(1,t)+b_{\Gamma_1}(t) \taut^- q(1,t)=0,& t\in \mathcal{N},\\
    q(x,T+\Delta t/2)=q_{T}(x),&  x\in \overline{\mathcal{M}}.
    \end{cases}
\end{equation}
Firstly, let us define $\gamma:=\max\{\left\| b\right\|_{L_h^\infty(\mathcal{M})},\left\| b_{\Gamma_{0}}\right\|_{L_h^\infty(\mathcal{N})},\left\| b_{\Gamma_{1}}\right\|_{L_h^\infty(\mathcal{N})}\}$ then we notice that the change of variable $\tilde{q}:=qa^{(t-T)\gamma}$, for $\gamma \Delta t<1$ and with 
$$a:=\left( \frac{1}{1-\gamma\Delta t}\right)^{1/\gamma\Delta t},$$
enables us to consider the potential $b$, in $\overline{\mathcal{M}}$, to be non-negative since in this case it follows that $D_{t}a^{\gamma t}=\gamma\taut^{+}a^{\gamma t}$. 

Now, our goal is to prove that the solution of the system \eqref{system:adjoint} verifies that $\left\|\taut^{+}q(0) \right\|_{\mathbb{L}_{h}^{2}(\mathcal{M})}\leq \left\|\taut^{+}q(t) \right\|_{\mathbb{L}_{h}^{2}(\mathcal{M})}$ for $t\in\mathcal{N}$, where we recall that  
$\displaystyle
\left\|u \right\|_{\mathbb{L}_{h}^{2}(\mathcal{W})}^2:=\left\| u\right\|_{L^{2}_{h}(\mathcal{W})}^2+\left\|u \right\|_{L^{2}_{h}(\partial\mathcal{W})}^{2}$. To this end, we consider $t\in \mathcal{N}$, $q\in C(\overline{\mathcal{M}}\times \mathcal{N}^{\ast})$ and define $\mathcal{N}_{t}:=(0,t+\Delta t/2)\cap\mathcal{N}$. Then, taking the $L^{2}_{h}(\mathcal{M}\times{\mathcal{N}_{t}})$-norm between main equation of the above system with $\taut^{-}q\in C(\overline{\mathcal{M}}\times \mathcal{N}_t)$ we have that
\begin{equation*}
    -\int_{\mathcal{M}\times\mathcal{N}_{t}}D_{t}q\taut^{-}q-\int_{\mathcal{M}\times\mathcal{N}_{t}}D_{h}^{2}\taut^{-}q\taut^{-}q+\int_{\mathcal{M}\times\mathcal{N}_{t}}b|\taut^{-}q|^{2}=0.
\end{equation*}
We now estimate the first two integrals from the left-hand side above. By virtue of \eqref{eq:identity:square}, \eqref{eq:tau:mas} and Proposition \ref{pro:integral:space} we see that
\begin{equation}\label{ine:main:equation}
    0\geq-\int_{\mathcal{M}\times\mathcal{N}_{t}}D_{t}q\taut^{-}q-\int_{\mathcal{M}\times\mathcal{N}_{t}}D_{h}^{2}\taut^{-}q\taut^{-}q\geq -\int_{\mathcal{M}\times\partial\mathcal{N}_{t}}|\taut^{+}q|^{2}\,n-\int_{\partial\mathcal{M}\times\mathcal{N}_{t}}\taut^{-}qt_{r}(D_{h}\taut^{-}q)n.
\end{equation}
Similarly, we derive from the boundary conditions of the system \eqref{system:adjoint} this
\begin{equation*}
    -\int_{\{0\}\times\mathcal{N}}D_{t}q\taut^{-}q-\int_{\{0\}\times\mathcal{N}_{t}}D_{h}\taut^{-}_{+}q\taut^{-}q\geq-\int_{\{0\}\times\mathcal{N}_{t}}D_{h}\taut^{-}_{+}q\taut^{-}q -\int_{\{0\}\times\partial\mathcal{N}_{t}}|\taut^{+}q|^{2}\,n
\end{equation*}
and 
\begin{equation*}
    -\int_{\{1\}\times\mathcal{N}_{t}}D_{t}q\taut^{-}q-\int_{\{1\}\times\mathcal{N}_{t}}D_{h}\taut^{-}_{-}q\taut^{-}q\geq -\int_{\{1\}\times\mathcal{N}_{t}}D_{h}\taut^{-}_{-}q\taut^{-}q -\int_{\{1\}\times\partial\mathcal{N}_{t}}|\taut^{+}q|^{2}\,n.
\end{equation*}
Adding the latter inequalities yields
\begin{equation}\label{ine:boundary}
    -\int_{\partial\mathcal{M}\times\mathcal{N}_{t}}D_{t}q\taut^{-}q-\int_{\partial\mathcal{M}\times\mathcal{N}_{t}}t_{r}(D_{h}\taut^{-}q)\taut^{-}qn\geq -\int_{\partial\mathcal{M}\times\partial\mathcal{N}_{t}}|\taut^{+}q|^{2}\,n.
\end{equation}
Next, combining \eqref{ine:main:equation} and \eqref{ine:boundary} entail that
\begin{equation}\label{ine:dissipative}
    0\geq  -\int_{\mathcal{M}\times\partial\mathcal{N}_{t}}|\taut^{+}q|^{2}\,n -\int_{\partial\mathcal{M}\times\partial\mathcal{N}_{t}}|\taut^{+}q|^{2}\,n,
\end{equation}
 for all $t\in\mathcal{N}$ which completes the proof of our claim. 

On the other hand, we now focus on the observability inequality for the system \eqref{system:adjoint}.
From the Carleman estimate \eqref{eq:discrete:carleman} we see that
\begin{equation}\label{ine:almost:observability}
\begin{split}
    \tau^{3}\int_{\partial\mathcal{M}\times\mathcal{N}^{\ast}}e^{2\taut\theta\varphi}|q|^{2}+\tau^{3}\int_{\mathcal{M}\times\mathcal{N}^{\ast}}e^{2\tau\theta\varphi}|q|^{2}\leq & C\tau^{3}\int_{\omega\times\mathcal{N}^{\ast}}e^{2\tau\theta\varphi}\theta^{3}|q|^{2}\\
    &+ Ch^{-2}\int_{\partial\mathcal{M}\times\partial\mathcal{N}}|\taut^{+}e^{\tau\theta\varphi}q|^{2}\\
    &+Ch^{-2}\int_{\mathcal{M}\times\partial\mathcal{N}}|\taut^{+}e^{\tau\theta\varphi}q|^{2},
    \end{split}
\end{equation}
provided that $\tau_{1}\geq \tau_{0}\left(T+T^{2}+T^{2}\gamma^{2/3}\right)$. Our first task is to estimate the left-hand side of the above inequality. Let us consider
\begin{equation*}
    \eta(t):=\begin{cases}
    1 &t\in\mathcal{N^{\ast}}\cap(M/4,3M/4),\\
    0& \text{otherwise}.
    \end{cases}
\end{equation*}
Then, we have that
\begin{multline*}
    \tau^{3}\int_{\partial\mathcal{M}\times\mathcal{N}^{\ast}}e^{2\tau\theta\varphi}\theta^{3}|q|^{2}+\tau^{3}\int_{\mathcal{M}\times\mathcal{N}^{\ast}}e^{2\tau\theta\varphi}\theta^{3}|q|^{2}\geq\tau^{3}\int_{\mathcal{M}\times\mathcal{N}^{\ast}}\eta(t)e^{2\tau\theta\varphi}\theta^{3}|q|^{2}\\
    +\tau^{3}\int_{\partial\mathcal{M}\times\mathcal{N}^{\ast}}\eta(t)e^{2\tau\theta\varphi}\theta^{3}|q|^{2}.
\end{multline*}
Using that $e^{2\tau\theta\varphi}\geq e^{-\frac{2^{5}\tau K_{0}}{3T^{2}}}$, for $t\in\mathcal{N}\cap(M/4,3M/4)$ where $K_{0}:=\max_{\overline{\Omega}}\{-\varphi\}$, we obtain
\begin{equation*}
    \tau^{3}\int_{\mathcal{M}\times\mathcal{N}^{\ast}}e^{2\tau\theta\varphi}\theta^{3}|q|^{2}\geq \frac{\tau^{3}}{T^{6}}e^{\frac{2^{5}\tau K_{0}}{3T^{2}}}\left(\int_{\mathcal{M}\times\mathcal{N}^{\ast}}\eta(t)|q|^{2}+\int_{\partial\mathcal{M}\times\mathcal{N}^{\ast}}\eta(t)|q|^{2}\right).
\end{equation*}
Applying the inequality \eqref{ine:dissipative}
 on the later inequality gives
 \begin{equation}\label{ine:lhs}
    \tau^{3}\int_{\partial\mathcal{M}\times\mathcal{N}^{\ast}}e^{2\tau\theta\varphi}\theta^{3}|q|^{2}+ \tau^{3}\int_{\mathcal{M}\times\mathcal{N}^{\ast}}e^{2\tau\theta\varphi}\theta^{3}|q|^{2}\geq CTe^{-\frac{C\tau}{T^{2}}}\left(\int_{\mathcal{M}}|\tau^{+}q(0)|^{2}+\int_{\partial\mathcal{M}}|\taut^{+}q(0 )|^{2}\right),
\end{equation}
where we have used that $\theta\geq 1/T^{2}$.

The second task is to bound the last two integral from the right-hand side of \eqref{ine:almost:observability}. Using \eqref{ine:dissipative} shows that
\begin{equation}\label{ine:rhs}
    \begin{split}
    h^{-2}\int_{\partial\mathcal{M}\times\partial\mathcal{N}}|\taut^{+}e^{\tau\theta\varphi}q|^{2}+h^{-2}\int_{\mathcal{M}\times\partial\mathcal{N}}|\taut^{+}e^{\tau\theta\varphi}q|^{2}\leq h^{-2}e^{-\frac{4k_{0}\tau}{\delta  T^{2}}}\left\|q_{T}\right\|^{2}_{\mathbb{L}^{2}_{h}(\mathcal{M})},
    \end{split}
\end{equation}
where $k_{0}:=\min_{\Omega}\{-\varphi(x)\}$.

As a final task, we collect the estimates obtained above. Recalling the change of variable at the beginning of this section and using that $\frac{1}{1-\gamma\Delta t}< e^{2\gamma\Delta t}$ provided that $0<\gamma\Delta t <1/2$, we obtain combining \eqref{ine:lhs} and \eqref{ine:rhs} on \eqref{ine:almost:observability} that
\begin{equation*}
   \left\|\taut^{+}q(0) \right\|^{2}_{\mathbb{L}_{h}^{2}(\mathcal{M})}\leq CT^{-1}e^{\frac{C\tau}{T^{2}}+CT\gamma}\left(\int_{\omega\times\mathcal{N}^{\ast}}|q|^{2}\right)+ Ch^{-2}e^{\frac{\tau}{T^{2}}(C-\frac{C'}{\delta})+CT\gamma}\left\|q_{T} \right\|^{2}_{\mathbb{L}_{h}^{2}(\mathcal{M})},
\end{equation*}
for $\tau\geq \tau_{2}(T+T^{2}+T^{2}\gamma)$ with $\tau_{2}=\max\{\tau_{1},3/8k_{0}\}$. Then, for $0<\delta\leq \delta_{1}<1/2$ small enough it follows from the above inequality
\begin{equation}\label{ine:almost:ob}
   \left\|\taut^{+}q(0) \right\|^{2}_{\mathbb{L}_{h}^{2}(\mathcal{M})}\leq CT^{-1}e^{\frac{C\tau}{T^{2}}+CT\gamma}\int_{\omega\times\mathcal{N}^{\ast}}|q|^{2}+ Ch^{-2}e^{-\frac{\tau C}{T^{2}\delta}+CT\gamma}\left\|q_{T} \right\|^{2}_{\mathbb{L}_{h}^{2}(\mathcal{M})}.
\end{equation}
The last step, which is standard, is to connect the discrete parameters $h,\Delta t$ and the parameter $\delta$. The discrete Carleman estimates holds provided that
\begin{equation}\label{ine:conditions}
    \frac{\tau h}{\delta T^{2}}\leq \epsilon_{0} \quad \text{ and }\quad \frac{\tau^{4}\Delta t}{\delta^{4}T^{6}}\leq \epsilon_{0}.
\end{equation}
Moreover, from the previous steps to obtain \eqref{ine:almost:ob} we need $0<h\leq h_{0}$, $0<\delta\leq \delta_{1}$, $\gamma\Delta t \leq 1/4$ and $\tau \geq \tau_{2}(T+T^{2}+T^{2}\gamma^{2/3})$. 
Let us fix $\tau=\tau_{2}\left( T+T^{2}+T^{2}\gamma^{2/3}\right)$ and for $\mu\geq 1$ we define 
\begin{equation}\label{eq:h}
    h_{1}:=\epsilon_{0}\left( \frac{\delta_{1}}{\tau_{}2}\left( 1+\frac{1}{T}+\gamma^{2/3}\right)^{-1}\right)^{M_{\mu}},
\end{equation}
and $\tilde{\Delta t}:=h_{1}^{\mu}$,
where $M_{\mu}:=\max\{1,4/\mu\}$. These definitions imply 
\begin{equation*}
\frac{h_{1}\tau^{M_{\mu}}}{\delta_{1}^{M_{\mu}}T^{2M_{\mu}}}=\epsilon_{0} \text{ and }\frac{\tilde{\Delta t}\tau^{M_{\mu}\mu}}{\delta^{M_{\mu}\mu}T^{2M_{\mu}\mu}}=\epsilon_{0}^{\mu}.
\end{equation*}
Choosing $h\leq \min\{h_{0},h_{1}\}$, such that $h/h_{1}\leq 1$, and setting $\displaystyle \delta=\left( \frac{h}{h_{1}}\right)^{m_{\mu}}\delta_{1}\leq \delta_{1}$ with $m_{\mu}:=\min\{\mu/4,1\}$ we get
\begin{equation}\label{ine:epsilon:zero}
    \epsilon_{0}=\frac{\tau^{M_{\mu}}h}{\delta^{M_{\mu}}T^{2M_{\mu}}}\geq \frac{\tau h}{\delta T^{2}}
\end{equation}
since $M_{\mu}\geq 1$. The above inequality is the first condition from \eqref{ine:conditions}. For the second condition, we note that
\begin{equation*}
    \epsilon_{0}^{\mu}=\frac{\tilde{\Delta t}\tau^{M_{\mu}\mu}}{\delta_{1}^{M_{\mu}\mu}T^{2M_{\mu}\mu}}=\frac{\tau^{M_{\mu}\mu}h^{\mu}}{\delta^{M_{\mu}\mu}T^{2}M_{\mu}\mu}.
\end{equation*}
Then, choosing $\Delta t\leq \min\{T^{-2}h^{\mu},(4\gamma)^{-1}\}$ we obtain
\begin{equation*}
    \frac{\Delta t \tau^{4}}{\delta^{4}T^{6}}\leq \epsilon_{0}^{\mu}\leq \epsilon_{0},
\end{equation*}
since $M_{\mu}\mu\geq 4$.

From equation \eqref{ine:epsilon:zero} we have that $\displaystyle \frac{\tau}{\delta T^{2}}=\left(\frac{\epsilon_{0}}{h} \right)^{1/M_{\mu}}=\left( \frac{\epsilon_{0}}{h}\right)^{m_{\mu}}$ for $\mu\geq 1$. Using this and \eqref{eq:h} on the inequality \eqref{ine:almost:ob} it follows that
\begin{equation*}
    \left\|\taut^{+}q(0) \right\|^{2}_{\mathbb{L}_{h}^{2}(\mathcal{\overline{M}})}\leq CT^{-1}e^{C(1+\frac{1}{T}+\gamma^{2/3}+T\gamma)}\int_{\omega\times\mathcal{N}^{\ast}}|q|^{2}+ Ch^{-2}e^{-C\frac{\epsilon_{0}^{m_{\mu}}}{h^{m_{\mu}}}+CT\gamma}\left\|q_{T}\right\|^{2}_{\mathbb{L}_{h}^{2}(\mathcal{\overline{M}})}
\end{equation*}
Hence,
\begin{equation*}
    \left\|\taut^{+}q(0) \right\|^{2}_{\mathbb{L}_{h}^{2}(\mathcal{\overline{M}})}\leq C_{\text{obs}}\left(\int_{\omega\times\mathcal{N}^{\ast}}|q|^{2}+ e^{-\frac{C_{1}}{h^{\min\{\mu/4,1\}}}}\left\|q_{T}\right\|^{2}_{\mathbb{L}_{h}^{2}(\mathcal{\overline{M}})}\right)
\end{equation*}
with $C_{\text{obs}}:=e^{C(1+\frac{1}{T}+\gamma^{2/3}+T\gamma)}$, which completes the proof.

\subsection{Proof Theorem \ref{theo:control}}
In this section, we use Theorem \ref{observ-SZ} to derive a controllability result for the discrete parabolic system \eqref{discrete primal system}. 
Let us consider the following discrete penalized functional
\begin{equation}
\begin{split}
    J(q_{T}):=&\frac{1}{2}\int_{\mathcal{B}\times\mathcal{N}^{\ast}}|q|^{2}+\frac{\phi(h)}{2}\left\|q_{T} \right\|^{2}_{\mathbb{L}^{2}_{h}(\mathcal{M})}+\int_{\mathcal{M}}y(0)\taut^{+}q(0)+\int_{\partial\mathcal{M}}y(0)\taut^{+}q(0).
\end{split}    
\end{equation}
where $q$ is solution of the system \eqref{discrete primal system}, $y(0)$ is the initial datum of \eqref{semi-discrete:model:ops} and $\phi(h):=e^{-\frac{C}{h^{\min\{\mu/4,1\}}}}$. Thanks to Cauchy--Schwartz and Young inequalities, and the observability inequality \eqref{ine:discrete:obs} it follows that the functional $J$ verifies
\begin{equation*}
\begin{split}
    J(q_{T})\geq& \frac{1}{2}\int_{\mathcal{B}\times\mathcal{N}^{\ast}}|q|^{2}+ \frac{\phi(h)}{2}\left\|q_{T} \right\|^{2}_{\mathbb{L}^{2}_{h}(\mathcal{M})}-\frac{1}{4C_{\text{obs}}^{2}}\left\|\taut^{+}q(0)\right\|^{2}_{\mathbb{L}^{2}(\mathcal{M})}-C_{\text{obs}}^{2}\left\|y(0) \right\|^{2}_{\mathbb{L}^{2}_{h}(\mathcal{M})}\\
    \geq &\frac{1}{4}\int_{\mathcal{B}\times\mathcal{N}^{\ast}}|q|^{2}+ \frac{\phi(h)}{4}\left\|q_{T}
    \right\|^{2}_{\mathbb{L}^{2}_{h}(\mathcal{M})}-C_{\text{obs}}^{2}\left\|y(0) \right\|^{2}_{\mathbb{L}^{2}_{h}(\mathcal{M})}.
    \end{split}
\end{equation*}
The above inequality entails that the functional $J$ is coercive, and hence there exists a unique minimizer $\tilde{q}_T$. Let  $\tilde{q}$ the solution of \eqref{discrete primal system} associated to the initial data $\tilde{q}_T$. Thus, from the Euler-Lagrange equation, $\tilde{q}$ verifies
\begin{equation}\label{eq:E:L}
    \int_{\mathcal{B}\times\mathcal{N}^{\ast}}\tilde{q}q+\phi(h)\langle \tilde{q}_{T},q_{T}\rangle_{\mathbb{L}^{2}_{h}(\mathcal{M})}=-\langle y(0),\taut^{+}q(0)\rangle_{\mathbb{L}^{2}_{h}(\mathcal{M})},
\end{equation}
for any $q\in C(\overline{\mathcal{M}}\times \overline{\mathcal{N}}^*)$. Setting the control $v(x,t):=\mathbbm{1}_{\mathcal{B}}\tilde{q}(t)$, we consider $y$ being solution of the following controlled system
\begin{align}
    \begin{cases}
    D_{t} y(x,t)-D_{h}^2 \taut^{+}y(x,t)+\taut^{+}(b y)(x,t)=\mathbbm{1}_{\mathcal{B}}\tilde{q}(x,t),& (x,t)\in \mathcal{M}\times \mathcal{N}^\ast,\\
    D_{t} y(0,t)-D_{h} \taut^{+}_{+}y(0,t)+\taut^{+}( b_{\Gamma_0}y)(0,t)=0,&  t\in \mathcal{N}^\ast,\\
    D_{t} y(1,t)+D_{h} \taut^{+}_{-}y(1,t)+\taut^{+}(yb_{\Gamma_1})(1,t)=0,& t\in \mathcal{N}^\ast,\\
    y(x,0)=g(x),&\forall x\in \overline{\mathcal{M}}.
    \end{cases}
\end{align}
By duality, is follows that 
\begin{equation}\label{eq:duality}
    \int_{\mathcal{B}\times\mathcal{N}^{\ast}}\tilde{q}q=\langle y(T),q_{T}\rangle_{\mathbb{L}^{2}_{h}(\mathcal{M})}-\langle y(0),\taut^{+}q(0)\rangle_{\mathbb{L}^{2}_{h}(\mathcal{M})},
\end{equation}
for all initial datum $q_{T}$. Then, combining \eqref{eq:E:L} and \eqref{eq:duality} we obtain
\begin{equation}\label{eq:y:q}
    y(x,T)=-\phi(h)\tilde{q}_{T}, \quad \forall x\in\overline{\mathcal{M}}.
\end{equation}
Therefore, from equation \eqref{eq:E:L}, taking $q_{T}=\tilde{q}_{T}$ enables us to write
\begin{equation*}
    \int_{\omega\times\mathcal{N}^{\ast}}|\tilde{q}|^{2}+\phi(h)\left\| \tilde{q}_{T}\right\|^{2}_{\mathbb{L}^{2}_{h}(\mathcal{M})}\leq \left\|y(0) \right\|^{2}_{\mathbb{L}^{2}_{h}(\mathcal{M})}\left\|\taut^{+}\tilde{q}(0) \right\|^{2}_{\mathbb{L}^{2}_{h}(\mathcal{M})}.
\end{equation*}
Thus, using the observability inequality \eqref{ine:discrete:obs} on the above estimation we get
\begin{equation*}
    \left\|v\right\|_{L^{2}_{h}(\mathcal{B}\times\mathcal{N}^{\ast})}\leq C_{\text{obs}}\left\|y(x,0) \right\|_{\mathbb{L}^{2}_{h}(\mathcal{M})}
\end{equation*}
and
\begin{equation*}
    \sqrt{\phi(h)}\left\| \tilde{q}_{T}\right\|_{\mathbb{L}^{2}_{h}(\mathcal{M})}\leq C_{\text{obs}}\left\|y(x,0) \right\|_{\mathbb{L}^{2}_{h}(\mathcal{M})}.
\end{equation*}
This last inequality, together with \eqref{eq:y:q}, allows us to conclude that
\begin{equation}
    \left\| y(x,T)\right\|_{\mathbb{L}^{2}_{h}(\mathcal{M})}\leq C_{\text{obs}}\sqrt{\phi(h)}\left\|y(x,0) \right\|_{\mathbb{L}^{2}_{h}(\mathcal{M})}.
\end{equation}
Hence, the $\phi$-controllability holds for $\phi(h):=e^{-\frac{C}{h^{\min\{\mu/4,1\}}}}$ and the proof is finished.

\begin{remark} 
The previous result can be stated for a function $h\mapsto \phi(h)$ such that 
\begin{equation*}
    \liminf_{h\rightarrow 0}\phi(h)e^{\frac{C}{h^{\min\{\mu/4,1\}}}}>0.
\end{equation*}
Indeed, from above we note that there exists $h^{\ast}>0$ such that $e^{-\frac{-C}{h^{\min\{\mu/4,1\}}}}\leq \phi(h)$ for all $h\leq h^{\ast}$. Then, for $h\leq \min\{h_{0},h_{1},h^{\ast}\}$ and $\Delta t\leq T^{-2}h^{4/\mu}$, Theorem \ref{observ-SZ} holds for such $\phi$ and we can follow the same steps as before to achieve the $\phi$-controllability for the system \eqref{semi-discrete:model:ops}.
\end{remark}

\section{Comments}
Since the null controllability holds in dimension $d\geq 2$, via Carleman estimate \cite{maniar2017null}, it seems promising to extend the results presented here in that direction. We have presented the $\phi$-controllability for the fully discrete approximation in the 1D case. Although, it is worth mentioning that the techniques and results given in Section \ref{sec:preliminaries} still hold in arbitrary dimension as soon as we restrict to finite difference schemes on Cartesian grids. In this direction, the first task would be to state for arbitrary spatial dimension Theorem \ref{teo:space:estimate} and Theorem \ref{prop:time:estimate}. These results stand for useful tools to obtain fully-discrete Carleman estimate where the Carleman weight function is a variant the weight function from \cite{fursikov1996controllability}. See for instance the results obtained for a four-order semi-discrete parabolic equation in \cite{CLTP-2022}. 

The results presented in Section \ref{estimaciones} are of independent interest in view of its potential applications on problems related to discrete Carleman estimates. For instance, it could be used to answer the challenge proposed in \cite{GC-HS-2021}, where several open problems related to fully discrete system were proposed (relaxed insensitizing control and hierarchic control). Additionally, is the appropriate input to be able to study controllability results for other types of models related to dynamic boundary conditions, such as the boundary control studied in \cite{maniar2022boundary}, null controllability with drift terms, optimal control problems \cite{MR3009728}.

Another unexplored research topic is the fully discrete inverse source problem in anisotropic parabolic equation with dynamic boundary conditions, which continuous setting was studied recently in \cite{L:inverse}. The proof of this result is based on a suitable Carleman estimate, then it could be interesting to obtain a fully discrete version of this result.

Finally, since the controllability of the fully-discrete system \eqref{semi-discrete:model:ops} has been obtained by using the penalized HUM method, would be an interesting problem study numerical algorithms in order to illustrate the theoretical results proved in this article, following the seminal paper \cite{boyer2013hum}.

\appendix
\section{Estimates for the Carleman estimate}\label{sec:proof:carleman}
In this section, we proof the technical results used in the development of the discrete Carleman estimate. We consider $\tau h(\delta T^{2})^{-1}\leq 1$ in the following Lemmas in order to ensure that every result from Section \ref{estimaciones} holds. Recall that our Carleman weight function is of the form $r(t,x):=e^{s(t)\varphi(x)}$, for $s\geq 1$, with $\varphi(x)=e^{\lambda\psi(x)}-e^{\lambda K}<0$, where $\psi\in C^{k}$ for $k$ sufficiently large and $\lambda\geq 1$, $K>\left\| \psi\right\|_{\infty}$ and $s(t):=\tau\theta(t)$ for $\theta(t):=\left( (t+\delta T)(T+\delta T-t)\right)^{-1}$. We denote $\rho:=r^{-1}$.
\par The estimates for $I_{11}$ and $I_{12}$ are similar, after a shift with respect to the discrete time variable, to \cite[Lemma 5.3]{LOP-2020} and \cite[Lemma 5.4]{LOP-2020} since in contrast to Lemma 2.1 and Lemma 2.2 from \cite{GC-HS-2021}, in \cite{LOP-2020} is considered not null boundary condition. For sake of completeness we give a sketch of the proof for that estimates. 
\subsection{Estimate of $I_{11}$ }
\begin{lemma}\label{I11}
For $\Delta t \tau (T^3 \delta^2)^{-1}\leq 1$ and $\tau h(\delta T^{2})^{-1}\leq 1$, we have
\begin{equation*}
\begin{split}
    I_{11}\geq&-\tau\lambda^{2}\int_{\mathcal{M}^{\ast}\times\mathcal{N}^{\ast}} \theta(\partial_{x}\psi)^{2}\phi|D_{h} z|^{2}-\int_{\mathcal{M}^*\times \mathcal{N}^{\ast}} s\lambda\partial_{x}^{2}\psi\phi|D_{h} z|^{2}\\
    &- \tau\lambda\int_{\partial\mathcal{M}\times\mathcal{N}^{\ast}} \theta\partial_{x} \psi\phi t_{r}(|D_{h} z|^2)\,n- X_{11}- Y_{11},
    \end{split}
\end{equation*}
where $X_{11}$ and $Y_{11}$ are given by\\
$\displaystyle X_{11}:=\int_{\mathcal{M}^*\times \mathcal{N}^{\ast}}s \mathcal{O}_\lambda ((sh)^2)|D_{h} z|^{2}$
and $\displaystyle Y_{11}:=\int_{\partial\mathcal{M}\times\mathcal{N}^*} (s\mathcal{O}_\lambda (sh)^2)\,t_{r}(|D_{h}z|^2)n.$
\end{lemma}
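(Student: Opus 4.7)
The plan is to first collapse the two outer applications of $\taut^{-}$ into a single time integral on the dual mesh, then mimic the continuous integration by parts at each fixed $t\in\mathcal{N}^{\ast}$, and finally apply the weight-function estimates from Section \ref{estimaciones} to extract the principal part. Since $\taut^{-}(f)\taut^{-}(g)=\taut^{-}(fg)$ pointwise on $\mathcal{N}$ and $\int_{\mathcal{N}}\taut^{-}(h)=\int_{\mathcal{N}^{\ast}}h$ (the $f\equiv 1$ case of \eqref{eq:tau:menos}), one has
$$I_{11}=2\int_{\mathcal{M}\times\mathcal{N}^{\ast}} r^{2}(A_{h}^{2}\rho)(D_{h}A_{h}\rho)\, D_{h}^{2}z\, D_{h}A_{h}z,$$
from which point the time variable plays only a passive role.

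For the spatial computation, write $D_{h}^{2}z=D_{h}(D_{h}z)$ and apply Proposition \ref{pro:integral:space} with $v=D_{h}z$ and $u=r^{2}(A_{h}^{2}\rho)(D_{h}A_{h}\rho)\,D_{h}A_{h}z$. This produces an interior term on $\mathcal{M}^{\ast}$ and a boundary trace:
$$\int_{\mathcal{M}} u\,D_{h}(D_{h}z) =-\int_{\mathcal{M}^{\ast}} D_{h}z\,D_{h}u + \int_{\partial\mathcal{M}} u\,t_{r}(D_{h}z)\,n.$$
Next, expand $D_{h}u$ via Lemma \ref{lem:product:rule}: the Leibniz rule splits $D_{h}u$ into a term carrying $D_{h}\bigl(r^{2}(A_{h}^{2}\rho)(D_{h}A_{h}\rho)\bigr)\cdot A_{h}(D_{h}A_{h}z)$ and a term carrying $A_{h}\bigl(r^{2}(A_{h}^{2}\rho)(D_{h}A_{h}\rho)\bigr)\cdot D_{h}(D_{h}A_{h}z)$. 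The second piece, when paired with $D_{h}z$, can be reorganised as a discrete divergence-type quantity and reabsorbed into the error $X_{11}$ at the cost of an extra factor of $sh$; it contributes nothing to the principal part.

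The extraction of the principal part then rests on Theorem \ref{teo:space:estimate}: applied to the weight $r^{2}(A_{h}^{2}\rho)(D_{h}A_{h}\rho)$ and to its discrete first derivative, it delivers
$$r^{2}(A_{h}^{2}\rho)(D_{h}A_{h}\rho)=r\,\partial_{x}\rho + s\,\mathcal{O}_{\lambda}((sh)^{2}),\qquad D_{h}\bigl(r^{2}(A_{h}^{2}\rho)(D_{h}A_{h}\rho)\bigr)=\partial_{x}(r\,\partial_{x}\rho)+s^{2}\,\mathcal{O}_{\lambda}((sh)^{2}),$$
while Proposition \ref{pro:space:estimate} gives $A_{h}(D_{h}A_{h}z)=D_{h}z$ up to a further $(sh)^{2}$ error measured against $|D_{h}z|$. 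Using $r\,\partial_{x}\rho=-s\lambda(\partial_{x}\psi)\phi$ and the identity for $\partial_{x}(r\,\partial_{x}\rho)$ from Lemma \ref{3.7}, the interior contribution reproduces the two stated volume terms modulo remainders of the form $s\mathcal{O}_{\lambda}((sh)^{2})|D_{h}z|^{2}$, which are exactly those collected into $X_{11}$. On $\partial\mathcal{M}$ the weight $u$ reduces via Proposition \ref{pro:space:estimate} to $-s\lambda(\partial_{x}\psi)\phi\,t_{r}(D_{h}z)$ up to $\mathcal{O}_{\lambda}((sh)^{2})$, which yields the stated boundary term together with the remainder $Y_{11}$.

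The main obstacle I anticipate is the bookkeeping required in the last two steps: the discrete Leibniz rule together with Theorem \ref{teo:space:estimate} produces a proliferation of cross terms, and one must keep careful track of which combinations collapse into the clean continuous-analogue principal parts and which remain as $(sh)^{2}$-small errors. The hypothesis $\tau h/(\delta T^{2})\leq 1$ is precisely what is needed for every such remainder to lie in the required class, while $\Delta t\,\tau/(T^{3}\delta^{2})\leq 1$ is used implicitly through Lemma \ref{lem:time:estimate} when trading $\taut^{-}$ factors for $\tau$-weighted quantities without degrading the estimates.
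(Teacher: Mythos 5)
Your overall strategy (shift the time variable with \eqref{eq:tau:menos}, integrate by parts in space, expand the weight) matches the paper's, but the particular integration by parts you choose opens a genuine gap. Taking $u=\alpha_{11}\,A_hD_hz$ with $\alpha_{11}:=r^2A_h^2\rho\,D_hA_h\rho$ and $v=D_hz$, the Leibniz rule produces, besides $D_h\alpha_{11}\cdot A_h^2D_hz\cdot D_hz$, the cross term $A_h\alpha_{11}\cdot A_hD_h^2z\cdot D_hz$. Since $A_h\alpha_{11}=-s\lambda\phi\,\partial_x\psi+s\,\mathcal{O}_\lambda((sh)^2)$ is of size $s$, this cross term is of exactly the same order as $I_{11}$ itself (in the continuous analogue it is $-\int r\partial_x\rho\,\partial_x(|\partial_xz|^2)$, i.e.\ minus the quantity you started from); it is \emph{not} an $(sh)^2$-small remainder and cannot be put into $X_{11}$. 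Dropping it both loses half of the computation and leaves the principal terms with twice the stated coefficient. Two further problems compound this. First, you apply Proposition \ref{pro:space:estimate}-type Taylor expansions to $A_h(D_hA_hz)$ and $A_h^2D_hz$; those estimates are only valid for the smooth weight $\rho$, whereas $z$ is an arbitrary discrete function, and $A_h^2D_hz=D_hz+\tfrac{h^2}{4}D_h^3z$ cannot be bounded by $(1+\mathcal{O}_\lambda((sh)^2))|D_hz|$. Second, your boundary term carries $A_hD_hz$ evaluated on $\partial\mathcal{M}$, which requires values of $z$ outside $\overline{\mathcal{M}}$ and is therefore not even well defined, nor does it reduce to the trace $t_r(|D_hz|^2)$ appearing in the statement.

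All three issues disappear with the paper's choice of decomposition: Lemma \ref{lem:product:rule} applied with $u=v=D_hz$ gives the exact identity $2\,D_h^2z\,A_hD_hz=D_h\bigl(|D_hz|^2\bigr)$ on $\mathcal{M}$, so that $I_{11}=\int_{\mathcal{M}\times\mathcal{N}^{\ast}}\alpha_{11}\,D_h(|D_hz|^2)$, and a single application of Proposition \ref{pro:integral:space} with $u=\alpha_{11}$ and $v=|D_hz|^2$ yields an interior term $-\int_{\mathcal{M}^{\ast}\times\mathcal{N}^{\ast}}D_h\alpha_{11}\,|D_hz|^2$ and a well-defined boundary term $\int_{\partial\mathcal{M}\times\mathcal{N}^{\ast}}\alpha_{11}\,t_r(|D_hz|^2)\,n$, with no cross term at all. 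After that, only the weight $\alpha_{11}$ (never $z$) needs to be expanded, via Proposition \ref{pro:space:estimate} and Corollary \ref{3.8}, which produces exactly the stated principal terms and the remainders $X_{11}$ and $Y_{11}$. You should restructure your argument around this identity rather than trying to repair the cross term by a second integration by parts.
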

\begin{proof}
We set $\alpha_{11}=r^2 A_{h}^2 \rho D_{h} A_{h} \rho$. Then, $I_{11}$ can be written as $\displaystyle I_{11}=2\int_{\mathcal{M}\times \mathcal{N}} \taut^-(\alpha_{11} D_{h}^2 z D_{h}A_{h} z)$.
Thanks to \eqref{eq:tau:menos} we can shift the discrete 
time variable to obtain $\displaystyle
    I_{11}=2\int_{\mathcal{M}\times \mathcal{N}^{\ast}} \alpha_{11} D_{h}^2 z D_{h} A_{h} z$.
Using Lemma \ref{lem:product:rule} and a discrete integration by parts with respect to the difference operator $D_{h}$, it follows that 
\begin{align}
    \label{estimate:I11:01}
    \begin{split} 
    I_{11}=&-\int_{\mathcal{M}\times \mathcal{N}^*} D_{h}(\alpha_{11})|D_{h} z|^2 +\int_{\partial\mathcal{M}\times\mathcal{N}^*} \alpha_{11}t_{r} (|D_{h} z|^2)\,n.
    \end{split}
\end{align}
Now, by Proposition \ref{pro:space:estimate} and Corollary \ref{3.8} we have that $
    \alpha_{11}=-s\lambda\varphi\partial_{x}\psi+s\mathcal{O}_{\lambda  }((sh)^{2})$ and $D_{h} \alpha_{11}=-s\lambda^2 (\partial_{x} \psi)^2 \phi - \lambda s \partial_{x}^{2} \psi \phi +s\mathcal{O}_\lambda ((sh)^2)$. 
Substituting these estimates in \eqref{estimate:I11:01} complete the proof.
\end{proof}

\subsection{Estimate of $I_{12}$}
\begin{lemma}\label{I12}
For $\Delta t \tau (T^3 \delta^2)^{-1}\leq 1$ and $\tau h(\delta T^{2})^{-1}\leq 1$, we have
    \begin{equation*}
    \begin{split}
        I_{12}
         \geq &2\tau\lambda^{2}\int_{ \mathcal{M}^{\ast}\times\mathcal{N}^{\ast}}\theta(\partial_{x}\psi)^{2}\phi|D_{h}z|^{2}+\tau\lambda\int_{ \mathcal{M}^{\ast}\times\mathcal{N}^{\ast}}\theta\phi\mathcal{O}(1)|D_{h}z|^{2}-X_{12}-Y_{12},
    \end{split}
\end{equation*}
where $\displaystyle 
X_{12}:=\int_{ \mathcal{M}\times\mathcal{N}^{\ast}}s\mathcal{O}_{\lambda}(1)|z|^{2}+\int_{ \mathcal{M}^{\ast}\times\mathcal{N}^{\ast}}s\mathcal{O}_{\lambda}(h^{2}+(sh)^{2})|D_{h}z|^{2}$
and
\begin{equation*}
Y_{12}:=-\int_{\partial \mathcal{M}\times\mathcal{N}^{\ast}}s\mathcal{O}_{\lambda}(1)|z|^{2}+\int_{\partial \mathcal{M}\times\mathcal{N}^{\ast}}s^{2} \mathcal{O}_{\lambda}(1)|z|^{2}+\int_{\partial \mathcal{M}\times\mathcal{N}^{\ast}}\mathcal{O}_{\lambda}(1)t_{r}(|D_{h}z|^{2}).
\end{equation*}
\end{lemma}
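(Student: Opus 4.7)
The plan is to exploit the fact that both $A_1 z$ and $B_2 z$ in \eqref{def:Ai:Bj} are defined via $\taut^{-}$, so that $A_1 z\cdot B_2 z = \taut^{-}\bigl((rA_h^2\rho D_h^2 z)\cdot(-2s\partial_x^2\phi\, z)\bigr)$, and then to shift the time variable using \eqref{eq:tau:menos}. This gives
\begin{equation*}
I_{12}=-2\int_{\mathcal{M}\times\mathcal{N}^{\ast}}(rA_h^2\rho)(s\partial_x^2\phi)\,z\,D_h^2 z.
\end{equation*}
Proposition \ref{pro:space:estimate} applied with $m=2$, $n=0$, $\alpha=0$ yields $rA_h^2\rho=1+\mathcal{O}_\lambda((sh)^2)$, so the integral splits into a principal piece $-2\int s\partial_x^2\phi\,z\,D_h^2 z$ plus a remainder that, after the same spatial integration by parts performed on the principal piece, feeds precisely into the $\mathcal{O}_\lambda((sh)^2)|D_h z|^2$ term of $X_{12}$.

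For the principal piece I would apply Proposition \ref{pro:integral:space} with $u=s\partial_x^2\phi\, z$ and $v=D_h z$, together with the Leibniz rule of Lemma \ref{lem:product:rule}, yielding
\begin{equation*}
\begin{split}
-2\int_{\mathcal{M}\times\mathcal{N}^{\ast}}s\partial_x^2\phi\,z\,D_h^2 z=&\,2\int_{\mathcal{M}^{\ast}\times\mathcal{N}^{\ast}}A_h(s\partial_x^2\phi)|D_h z|^2\\
&+2\int_{\mathcal{M}^{\ast}\times\mathcal{N}^{\ast}}D_h(s\partial_x^2\phi)\,A_h z\,D_h z\\
&-2\int_{\partial\mathcal{M}\times\mathcal{N}^{\ast}}s\partial_x^2\phi\,z\,t_r(D_h z)\,n.
\end{split}
\end{equation*}
Since $\partial_x^2\phi=\lambda^2(\partial_x\psi)^2\phi+\lambda(\partial_x^2\psi)\phi$ and $A_h(s\partial_x^2\phi)=s\partial_x^2\phi+\mathcal{O}_\lambda(sh^2)$, the first integral on the right produces the two positive contributions $2\tau\lambda^2\int\theta(\partial_x\psi)^2\phi|D_h z|^2$ and $\tau\lambda\int\theta\phi\mathcal{O}(1)|D_h z|^2$ that appear in the statement, up to an $\mathcal{O}_\lambda(sh^2)|D_h z|^2$ remainder absorbed by $X_{12}$.

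To treat the cross term I would use $A_h z\, D_h z=\tfrac12 D_h(z^2)$ and apply Proposition \ref{pro:integral:space} once more, now on the dual mesh, in order to move the remaining difference onto the weight. This gives $-\int_{\mathcal{M}\times\mathcal{N}^{\ast}} D_h^2(s\partial_x^2\phi)\,z^2$ plus a surface contribution. Iterating Proposition \ref{pro:space:estimate} yields $D_h^2(s\partial_x^2\phi)=s\mathcal{O}_\lambda(1)$, which produces exactly the $\int_{\mathcal{M}\times\mathcal{N}^{\ast}}s\mathcal{O}_\lambda(1)|z|^2$ piece of $X_{12}$. The surface term from this second integration by parts, combined with the last boundary integral of the display above, is then controlled by Young's inequality $2|s\partial_x^2\phi||z||t_r(D_h z)|\leq s^2\mathcal{O}_\lambda(1)|z|^2+\mathcal{O}_\lambda(1)|t_r(D_h z)|^2$, producing the two pieces of $Y_{12}$ supported on $\partial\mathcal{M}\times\mathcal{N}^{\ast}$ (the $-\int s\mathcal{O}_\lambda(1)|z|^2$ piece of $Y_{12}$ comes in with the appropriate sign from the surface residue of the second integration by parts).

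The main obstacle I anticipate is bookkeeping the numerous error terms of the form $s^a(sh)^b\mathcal{O}_\lambda(1)$ arising at each step and checking they fit exactly into the stated $X_{12}$ and $Y_{12}$. The computation is the discrete analogue of the familiar continuous identity for $\int\partial_x^2\phi\,z\,\partial_x^2 z$ and closely parallels \cite[Lemma 5.4]{LOP-2020}; the new ingredient here, compared to that reference, is that without a Dirichlet condition on $\partial\mathcal{M}$ the surface contributions from both integrations by parts must be carried explicitly throughout and combined to form $Y_{12}$ rather than discarded.
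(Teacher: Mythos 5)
Your argument is correct and follows essentially the same route as the paper's proof: shift the time variable via \eqref{eq:tau:menos}, integrate by parts in space with Proposition \ref{pro:integral:space}, split $D_h(\cdot)$ by Lemma \ref{lem:product:rule} into the coercive $|D_hz|^2$ piece and a cross term, rewrite the cross term as $\tfrac12 D_h(z^2)$ and integrate by parts again, and finish the boundary contribution with Young's inequality. The only (harmless) difference is that you extract $rA_h^2\rho=1+\mathcal{O}_\lambda((sh)^2)$ \emph{before} integrating by parts, whereas the paper keeps $\alpha_{12}=2rA_h^2\rho\,\partial_x^2\phi$ intact and estimates $A_h\alpha_{12}$, $D_h\alpha_{12}$, $D_h^2\alpha_{12}$ afterwards via Proposition \ref{pro:space:estimate} and Lemma \ref{lem:space:estimate}; note only that the remainder from your early splitting contributes not just to the $|D_hz|^2$ error in $X_{12}$ but, after its own integration by parts, also to the $|z|^2$ and boundary pieces.
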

\begin{proof}
Let us set $\alpha_{12}:=2r\,A_{h}^{2}\rho\,\partial_{x}^{2}\phi$. Similar to the previous estimate we use \eqref{eq:tau:menos} to shift the discrete time variable, thus we have to estimate $\displaystyle I_{12} =-\int_{\mathcal{M}\times\mathcal{N}^{\ast}}s\,\alpha_{12}z\,D_{h}^{2}z.$
According to Proposition \ref{pro:integral:space} we have 
\begin{equation}\label{estimate:I12:01}
\begin{split}
I_{12}=&\int_{\mathcal{M}^{\ast}\times\mathcal{N}^{\ast}} s\,D_{h}( \alpha_{12}\,z)D_{h}z -\int_{\partial \mathcal{M}\times\mathcal{N}^{\ast}}s\,\alpha_{12}\,z\,t_{r}(D_{h}z)\,n=:I_{12}^{(a)}-I_{12}^{(b)}.
\end{split}
\end{equation}
Let us focus on $I_{12}^{(a)}$. From Lemma \ref{lem:product:rule} it follows that
\begin{equation}
\begin{split}
    I_{12}^{(a)}=&\int_{\mathcal{M} ^{\ast}\times \mathcal{N}^{\ast}} s\,D_{h}(\alpha_{12})\,A_{h}z\,D_{h}z+\int_{ \mathcal{M}^{\ast}\times\mathcal{N}^{\ast}}s\,A_{h}(\alpha_{12})|D_{h}z|^{2}=:I_{12}^{(a_{1})}+I_{12}^{(a_{2})}.
    \end{split}
\end{equation}
We note that
\begin{equation}\label{estimate:I12:02}
    \begin{split}
        I_{12}^{(a_{2})}
        =&\int_{\mathcal{M}^{\ast}\times\mathcal{N}^{\ast}}2s\lambda^{2}(\partial_{x}\psi)^{2}\phi|D_{h}z|^{2} +\int_{\mathcal{M}^{\ast}\times\mathcal{N}^{\ast}}s\lambda\phi\partial_{x}^{2}\psi|D_{h}z|^{2}+\int_{\mathcal{M}^{\ast}\times\mathcal{N}^{\ast}}s\mathcal{O}_{\lambda}(h^{2}+(sh)^{2})|D_{h}z|^{2},
    \end{split}
\end{equation}
being consequence of Proposition \ref{pro:space:estimate} and Lemma \ref{lem:space:estimate}. Additionally, to estimate $I_{12}^{(a_{1})}$ we note that by Lemma \ref{lem:product:rule} and a spatial discrete integration by parts yield
\begin{equation*}
\begin{split}
    I_{12}^{(a_{1})}=&-\frac{1}{2}\int_{\mathcal{M}\times\mathcal{N}^{\ast}}sD_{h}^{2}\alpha_{12}\,|z|^{2}+\frac{1}{2}\int_{\partial \mathcal{M}\times\mathcal{N}^{\ast}}s\ t_{r}(D_{h}\alpha_{12})|z|^{2}\,n.
    \end{split}
\end{equation*}
Now, by virtue of Lemma \ref{lem:space:estimate} and Proposition \ref{pro:space:estimate}, for $I_{12}^{(a_{1})}$ we get
 \begin{equation}\label{estimate:I12:03}
     I_{12}^{(a_{1})}=-\int_{\mathcal{M}\times\mathcal{N}^{\ast}}s\mathcal{O}_{\lambda}(1)|z|^{2}+\int_{\partial \mathcal{M}\times\mathcal{N}^{\ast}}s\mathcal{O}_{\lambda}(1)|z|^{2}.
 \end{equation}
 Similarly, using Young's inequality we have
 \begin{equation}\label{estimate:I12:04}
     |I_{12}^{(b)}|\leq \int_{\partial \mathcal{M}\times\mathcal{N}^{\ast}}s^{2} |\mathcal{O}_{\lambda}(1)||z|^{2}+\int_{\partial \mathcal{M}\times\mathcal{N}^{\ast}}|\mathcal{O}_{\lambda}(1)|t_{r}(|D_{h}z|^{2}).
 \end{equation}
 Therefore, combining \eqref{estimate:I12:02}, \eqref{estimate:I12:03} and \eqref{estimate:I12:04},  $I_{12}$ can be estimated as
\begin{equation*}
    \begin{split}
        I_{12}
         \geq &\int_{ \mathcal{M}^{\ast}\times\mathcal{N}^{\ast}}2s\lambda^{2}(\partial_{x}\psi)^{2}\phi|D_{h}z|^{2}+\int_{ \mathcal{M}^{\ast}\times\mathcal{N}^{\ast}}s\lambda\phi\mathcal{O}(1)|D_{h}z|^{2}-X_{12}-Y_{12},
    \end{split}
\end{equation*}
and the Lemma follows.
\end{proof}

\subsection{Estimate of $I_{13}$ and $I_{23}$}
These estimates are similar to Lemma 2.8 from \cite{GC-HS-2021}. The only difference is that we need to deal with space boundary terms which have been considered in \cite{LOP-2020}. We follow a similar strategy developed in \cite[Lemma 2.8]{GC-HS-2021}, however we point out that following the steps described on the proof of Lemma 2.8 is not possible to get equation (190) in the page 64, since the integral with the term $|z|^{2}$ must be shifted in time to the right and not to the left after the application of the equation (155) in that work. For this reason, some new terms will appear and additional steps are needed to obtain the desired result, although it can be controlled, thus the main results from \cite{GC-HS-2021} still hold. 
\begin{lemma}\label{I_{13}}
For $\tau^{2} \Delta t(\delta^{4}T^{6})^{-1}\leq \epsilon_{1}(\lambda)$ and $\tau h(\delta T^{2})^{-1}\leq \epsilon_{1}(\lambda)$, there exist constant $c_{\lambda}, c'_{\lambda}>0$ uniform with respect to $\Delta t$ and $h$ such that
\begin{equation*}
    \begin{split}
        I_{13}+I_{23}\geq&-X_{13}-X_{23}-Y_{13}-Y_{23}-C_{\lambda}\int_{\mathcal{M}^{\ast}\times\partial\mathcal{N}^{+}}|D_{h}\taut^{+}z|^{2}-\int_{\mathcal{M}^{\ast}\times \partial \mathcal{N}}\mathcal{O}_{\lambda}(1)\taut^{+}((sh)^{2})|D_{h}\taut^{+}z|^{2})\\
        &-\int_{\mathcal{M}\times\partial\mathcal{N}}\mathcal{O}_{\lambda}(1)\taut^{+}(s^{2})|\taut^{+}z|^{2}-C\int_{\partial\mathcal{M}\times\mathcal{N}^{\ast}}s\,t_{r}(|D_{h}z|^{2}),
    \end{split}
\end{equation*}
where $\displaystyle X_{13}+X_{23}:=\int_{\mathcal{M}\times\mathcal{N}^{\ast}}\mu\,|z|^{2}+\int_{\mathcal{M}^{\ast}\times\mathcal{N}^{\ast}}\nu|D_{h}z|^{2}-C_{\lambda}\Delta t \int_{\mathcal{M}^{\ast}\times\mathcal{N}}|D_{ht}^{2}z|^{2}+\int_{\mathcal{M}\times\mathcal{N}}\taut^{-}(\gamma)|D_{t}z|^{2}$, $\displaystyle Y_{13}+Y_{23}:=\int_{\partial\mathcal{M}\times\mathcal{N}^{\ast}}\eta t_{r}(|D_{h}z|^{2})\,n+\int_{\partial\mathcal{M}\times\mathcal{N}}\taut^{-}(\beta)|D_{t}z|^{2}$
and
\begin{equation*}
    \begin{split}
        \nu:=&\left[ T\theta(sh)^{2}+s(sh)^{2}+\left( \frac{\tau \Delta t}{\delta^{4}T^{4}}\right)\left( \frac{\tau h}{\delta T^{2}}\right)+\left( \frac{\tau^{2}\Delta t}{\delta^{4}T^{6}}\right)+\left( \frac{\tau \Delta t}{\delta^{3}T^{4}}\right)\left( \frac{\tau h}{\delta T^{2}}\right)^{3}\right]\mathcal{O}_{\lambda}(1),\\
        \mu:=&\left[ Ts^{2}\theta+\left( \frac{\tau^{2}\Delta t}{\delta^{4}T^{6}}\right)+\left(\frac{\tau \Delta t}{\delta T^{4}}\right)\left( \frac{\tau h}{\delta t^{2}}\right)^{3}\right]\mathcal{O}_{\lambda}(1),\\
        \gamma:=&\left[ s^{-1}(sh)^{2}+\Delta t s^{2}\right]\mathcal{O}_{\lambda}(1),\\
        \beta:=&Cs^{-1}+\left[ s^{-1}(sh)^{2}+hs^{-1}(sh)^{2}+s^{-1}(sh)^{4}\right]\mathcal{O}_{\lambda}(1), \\
        \eta:=&\left[s(sh)^{2}+(sh)^{4}\right]\mathcal{O}_{\lambda}(1).
    \end{split}
\end{equation*}
\end{lemma}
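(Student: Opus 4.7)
\textbf{Proof plan for Lemma \ref{I_{13}}.} By definition,
\[
I_{13}=\int_{\mathcal{M}\times\mathcal{N}}\taut^{-}\!\bigl(rA_h^{2}\rho\,D_h^{2}z\bigr)\,D_t z,\qquad
I_{23}=\int_{\mathcal{M}\times\mathcal{N}}\taut^{-}\!\bigl(rD_h^{2}\rho\,A_h^{2}z\bigr)\,D_t z.
\]
The plan is to follow the strategy of \cite[Lemma 2.8]{GC-HS-2021}, replacing the continuous Leibniz rule by Lemmas \ref{lem:product:rule} and \ref{lem:space:estimate}, the Leibniz rule in time by the identities \eqref{eq:time:product:rule}--\eqref{eq:identity:square} and Proposition \ref{properties:time:discret:ops}, and the estimate for $\partial_t$ on the weight by Theorem \ref{prop:time:estimate}. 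Since $\taut^{-}$ distributes over products and $D_t$ commutes with $D_h$, the general scheme is: (i) replace the discrete weight by its continuous analogue plus an $\mathcal{O}_\lambda((sh)^{2})$ error (Proposition \ref{pro:space:estimate}); (ii) perform discrete integration by parts in space to move $D_h^{2}$ off $z$; (iii) use the calculus identity \eqref{eq:identity:square} to write products $\taut^{-}(u)\,D_t u=\tfrac{1}{2}D_t(u^{2})-\tfrac{\Delta t}{2}|D_t u|^{2}$; and (iv) integrate by parts in time via \eqref{eq:integral:time:primal} to transfer $D_t$ to the weight.

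\textbf{Treatment of $I_{13}$.} Writing $\taut^{-}(rA_h^{2}\rho\,D_h^{2}z)=\taut^{-}(rA_h^{2}\rho)\,\taut^{-}(D_h^{2}z)$ and invoking Proposition \ref{pro:space:estimate} with $m=2$, $n=0$, $\alpha=0$, we get $rA_h^{2}\rho=1+\mathcal{O}_\lambda((sh)^{2})$. The main piece becomes $\int \taut^{-}(D_h^{2}z)\,D_t z$. A discrete spatial integration by parts (Proposition \ref{pro:integral:space}) produces the space boundary contribution $\int_{\partial\mathcal{M}\times\mathcal{N}}t_r(\taut^{-}(D_h z))\,D_t z\,n$, which after Cauchy--Schwarz feeds into the $|D_t z|^{2}$ part of $Y_{13}+Y_{23}$ with the coefficient $\beta$, and into the $s\,t_r(|D_h z|^{2})$ term. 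The interior piece $-\int \taut^{-}(D_h z)\,D_t(D_h z)$, using \eqref{eq:identity:square} followed by the time integration formula $\int_{\mathcal{N}}D_t(|D_h z|^{2})=\int_{\partial\mathcal{N}}\taut^{+}(|D_h z|^{2})n$, splits into the boundary-in-time contribution controlled by $C_\lambda\int_{\mathcal{M}^{\ast}\times\partial\mathcal{N}^{+}}|D_h\taut^{+}z|^{2}$ and the $\tfrac{\Delta t}{2}|D_t D_h z|^{2}$ term, which accounts for the $-C_\lambda\Delta t\!\int|D_{ht}^{2}z|^{2}$ factor in $X_{13}+X_{23}$. The $\mathcal{O}_\lambda((sh)^{2})$ remainder of Proposition \ref{pro:space:estimate} is absorbed by Young's inequality into the $\nu|D_h z|^{2}$ and the $\eta\,t_r(|D_h z|^{2})$ contributions.

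\textbf{Treatment of $I_{23}$.} By Proposition \ref{pro:space:estimate} with $m=0$, $n=2$, $\alpha=0$, we have $rD_h^{2}\rho=r\partial_x^{2}\rho+s^{2}\mathcal{O}_\lambda((sh)^{2})$, and $A_h^{2}z=z+\tfrac{h^{2}}{4}D_h^{2}z$ by the average-operator identity (Lemma \ref{lem:product:rule}). The dominant part is $\int \taut^{-}(r\partial_x^{2}\rho)\,\taut^{-}(z)\,D_t z$. Using \eqref{eq:identity:square} and then \eqref{eq:integral:time:primal} with $f=r\partial_x^{2}\rho$ and $g=z^{2}$, we obtain
\[
\int_{\mathcal{N}}\taut^{-}(r\partial_x^{2}\rho)\,\taut^{-}(z)D_t z=-\tfrac{1}{2}\int_{\mathcal{N}}D_t(r\partial_x^{2}\rho)\taut^{+}(z^{2})+\tfrac{1}{2}\int_{\partial\mathcal{N}}\taut^{+}(r\partial_x^{2}\rho\,z^{2})\,n-\tfrac{\Delta t}{2}\int_{\mathcal{N}}\taut^{-}(r\partial_x^{2}\rho)|D_t z|^{2}.
\]
Theorem \ref{prop:time:estimate} estimates $D_t(r\partial_x^{2}\rho)=Ts^{2}\theta\mathcal{O}_\lambda(1)+\Delta t\bigl(\tfrac{\tau^{2}}{\delta^{4}T^{6}}+\tfrac{\tau\Delta t}{\delta T^{4}}\bigl(\tfrac{\tau h}{\delta T^{2}}\bigr)^{3}\bigr)\mathcal{O}_\lambda(1)$, giving precisely the $\mu|z|^{2}$ term. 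The time-boundary term produces $\int_{\mathcal{M}\times\partial\mathcal{N}}\taut^{+}(s^{2})|\taut^{+}z|^{2}$. The $\Delta t s^{2}|D_t z|^{2}$ contribution enters $\gamma$. The correction $\tfrac{h^{2}}{4}\taut^{-}(D_h^{2}z)$ coming from $A_h^{2}z-z$, after one more spatial integration by parts, contributes $\nu|D_h z|^{2}$ and the $\taut^{+}((sh)^{2})|D_h\taut^{+}z|^{2}$ time-boundary term, together with further $t_r$ boundary pieces absorbed in $\eta$.

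\textbf{Main obstacle.} The principal difficulty is bookkeeping: each integration by parts (in space and in time) generates a spatial and a temporal boundary contribution, and the interaction between $\taut^{\pm}$ shifts and the $t_r$ operator produces several tangles where one must carefully track whether the resulting coefficient carries $\tau$, $(sh)^2$, $\tau h/(\delta T^{2})$, or $\tau^{2}\Delta t/(\delta^{4}T^{6})$. In particular the mixed derivative term $\Delta t|D_{ht}^{2}z|^{2}$ is new compared to the continuous case and has no natural absorbing counterpart in the left-hand side of \eqref{conjugation:01}; keeping it with a negative sign on the right-hand side of the lemma (as a term that will later be controlled by the condition $\tau^{2}\Delta t/(\delta^{4}T^{6})\le \epsilon_{1}(\lambda)$) is essential. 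The precise definitions of $\mu$, $\nu$, $\gamma$, $\beta$, $\eta$ are dictated by the worst of these remainders, and verifying that every scrap of the calculation fits inside them is the bulk of the work.
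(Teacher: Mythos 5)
Your plan follows the same route as the paper's proof: the same decompositions $\alpha_{13}=rA_h^2\rho$ and $\alpha_{23}=rD_h^2\rho$ with $A_h^2z=z+\tfrac{h^2}{4}D_h^2z$, the same sequence of spatial integration by parts, the square identity \eqref{eq:identity:square}, time integration by parts via \eqref{eq:integral:time:primal}, and Theorem \ref{prop:time:estimate} for the time difference of the weight. However, two points in your write-up are genuine problems rather than bookkeeping.

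First, and most importantly, you have the sign and the role of the mixed term $\Delta t\int|D_{ht}^2z|^2$ wrong. You describe it as a term that ``has no natural absorbing counterpart'' and must be kept ``with a negative sign on the right-hand side of the lemma'' to be ``controlled by the condition $\tau^2\Delta t(\delta^4T^6)^{-1}\le\epsilon_1(\lambda)$.'' That mechanism cannot work: nothing in the Carleman machinery controls $\int|D_{ht}^2z|^2$, and a smallness condition on $\tau^2\Delta t$ does not make an unweighted $\Delta t\int|D_{ht}^2z|^2$ term small relative to anything on the left. The actual mechanism is that this term arises from $I_{13}^{(a_2)}$ with a \emph{favorable} sign, $+\tfrac{\Delta t}{2}\int A_h\taut^-(\alpha_{13})|D_{ht}^2z|^2$ with $A_h\taut^-(\alpha_{13})\ge c_\lambda>0$ (this is why $-C_\lambda\Delta t\int|D_{ht}^2z|^2$ sits inside $X_{13}+X_{23}$, so it enters the lower bound with a plus sign), and it is precisely this positive contribution that absorbs the \emph{negative} mixed-derivative remainders produced by $I_{23}^{(b)}$, which all carry small prefactors like $(\tau h/(\delta T^2))$ or $(\tau^2\Delta t/(\delta^4T^6))$; shrinking $\epsilon_1(\lambda)$ makes those dominated by the good term from $I_{13}$. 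Without identifying this, the lemma is not provable along your lines.

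Second, for $I_{13}$ you expand $rA_h^2\rho=1+\mathcal{O}_\lambda((sh)^2)$ \emph{before} integrating by parts, leaving a remainder $\int\mathcal{O}_\lambda((sh)^2)\,\taut^-(D_h^2z)\,D_tz$. A direct Young inequality on this produces weighted $|D_h^2z|^2$ or $s(sh)^2|D_tz|^2$ contributions, neither of which fits into $\nu|D_hz|^2$, $\gamma|D_tz|^2$ or $\eta\,t_r(|D_hz|^2)$ as you claim. The paper keeps $\alpha_{13}$ intact through the spatial integration by parts and only then estimates $D_h\taut^-(\alpha_{13})=\mathcal{O}_\lambda((sh)^2)$ and $A_hD_t\alpha_{13}$, so that the error multiplies only first-order quantities $A_hD_tz\cdot D_h\taut^-z$; reversing the order as you propose leaves a term you cannot place.
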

\begin{proof}
For $I_{13}$, let us set $\alpha_{13}:=rA_{h}^{2}\rho$. Then, we have that $\displaystyle 
    I_{13}:=\int_{\mathcal{M}\times\mathcal{N}}\taut^{-}(\alpha_{13}D_{h}^{2}z)\,D_{t}z$. Followed by a discrete integration by parts with respect to the difference operator $D_{h}$, $I_{13}$ can be written as
\begin{equation*}
\begin{split}
I_{13}=&-\int_{\mathcal{M}^{\ast}\times\mathcal{N}}D_{h}\taut^{-}(z)\,D_{h}(\taut^{-}(\alpha_{13})\,D_{t}z)+\int_{\partial \mathcal{M}\times\mathcal{N}}\taut^{-}(\alpha_{13})D_{t}z\,t_{r}\left( D_{h}\taut^{-}z\right)\,n=:I_{13}^{(a)}+I_{13}^{(b)}.
\end{split}
\end{equation*}
We note that the product rule, Lemma \ref{lem:product:rule},  for the difference operator $D_{h}$ yields for $I_{13}^{(a)}$
\begin{equation*}
\begin{split}
    I_{13}^{(a)}=&-\int_{\mathcal{M}^{\ast}\times\mathcal{N}}D_{h}\taut^{-}(\alpha_{13})\,A_{h}D_{t}z\,D_{h}\taut^{-}z-\int_{\mathcal{M}^{\ast}\times\mathcal{N}}A_{h}\taut^{-}(\alpha_{13})\,D_{xt}^{2}z\,D_{h}\taut^{-}(z)=:I_{13}^{(a_{1})}+I_{13}^{(a_{2})}.
    \end{split}
\end{equation*}
Now, by using the identity \eqref{eq:identity:square}, for $I_{13}^{(a_{2})}$ we get
\begin{equation*}
\begin{split}
    I_{13}^{(a_{2})}=&-\frac{1}{2}\int_{\mathcal{M}^{\ast}\times\mathcal{N}}A_{h}\taut^{-}(\alpha_{13})\,D_{t}(|D_{h}z|^{2})+\frac{\Delta t}{2}\int_{\mathcal{M}^{\ast}\times\mathcal{N}}A_{h}\taut^{-}(\alpha_{13})\,|D_{ht}^{2}z|^{2}.
    \end{split}
\end{equation*}
Using the integration by parts with respect to the time difference operator $D_{t}$ given by \eqref{eq:integral:time:primal}, on the first integral of $I_{13}^{(a_{2})}$, we obtain
\begin{equation*}
\begin{split}
    I_{13}^{(a_{2})}=&\frac{1}{2}\int_{\mathcal{M}^{\ast}\times\mathcal{N}}A_{h}D_{t}\alpha_{13}\,|D_{h}\taut^{+}z|^{2}-\int_{\mathcal{M}^{\ast}\times\partial\mathcal{N}}A_{h}\taut^{+}\alpha_{13}|D_{h}\taut^{+}z|^{2}n+\frac{\Delta t}{2}\int_{\mathcal{M}^{\ast}\times\mathcal{N}}A_{h}\taut^{-}(\alpha_{13})\,|D_{ht}^{2}z|^{2}.
    \end{split}
\end{equation*}
By virtue of Lemma \ref{lem:space:estimate} we note that $A_{h}\taut^{\pm}\alpha_{13}= C_{\lambda}>0$ provided $\tau h(\delta T^{2})^{-1}\leq \varepsilon_{1}(\lambda)$ small enough, we thus have the following lower bound for $I_{13}^{(a_{2})}$
\begin{equation*}
\begin{split}
    I_{13}^{(a_{2})}\geq&\frac{1}{2}\int_{\mathcal{M}^{\ast}\times\mathcal{N}}A_{h}D_{t}\alpha_{13}\,|D_{h}\taut^{+}z|^{2}-C_{\lambda}\int_{\mathcal{M}^{\ast}\times\partial\mathcal{N}^{+}}|D_{h}\taut^{+}z|^{2}+C_{\lambda}\Delta t \int_{\mathcal{M}^{\ast}\times\mathcal{N}}|D_{ht}^{2}z|^{2},
    \end{split}
\end{equation*}
where we have dropped the left time boundary term since it is positive. 
Moreover, by using the inequality $|D_{h}\taut^{+}z|^{2}\leq C|D_{h}\taut^{-}z|^{2}+C(\Delta t)^{2}|D_{ht}^{2}z|^{2}$, we get
\begin{equation*}
\begin{split}
    I_{13}^{(a_{2})}\geq&-C\int_{\mathcal{M}^{\ast}\times\mathcal{N}}|A_{h}D_{t}\alpha_{13}|\,|D_{h}\taut^{-}z|^{2}-C(\Delta t)^{2}\int_{\mathcal{M}^{\ast}\times\mathcal{N}}|A_{h}D_{t}\alpha_{13}|\,|D_{ht}^{2}z|^{2}\\
    &-C_{\lambda}\int_{\mathcal{M}^{\ast}\times\partial\mathcal{N}^{+}}|D_{h}\taut^{+}z|^{2}+C_{\lambda}\Delta t \int_{\mathcal{M}^{\ast}\times\mathcal{N}}|D_{ht}^{2}z|^{2}.
    \end{split}
\end{equation*}
We note that provided $\frac{\Delta t \tau}{T^{3}\delta^{2}}\leq \frac{1}{2}$, from Theorem \ref{prop:time:estimate},  we have that
$A_{h}D_{t}\alpha_{13}=T\taut^{-}(\theta(sh)^{2})\mathcal{O}_{\lambda}(1)+\left( \frac{\tau\Delta t}{\delta^{3}T^{4}}\right)\left( \frac{\tau h}{\delta T^{2}}\right)\mathcal{O}_{\lambda}(1)$. Then, we deduce
\begin{equation*}
\begin{split}
    I_{13}^{(a_{2})}\geq&-\int_{\mathcal{M}^{\ast}\times\mathcal{N}^{\ast}}\left( T\theta(sh)^{2}\mathcal{O}_{\lambda}(1)+\left( \frac{\tau\Delta t}{\delta^{3}T^{4}}\right)\left( \frac{\tau h}{\delta T^{2}}\right)\mathcal{O}_{\lambda}(1)\right)\,|D_{h}z|^{2}-C_{\lambda}\int_{\mathcal{M}^{\ast}\times\partial\mathcal{N}^{+}}|D_{h}\taut^{+}z|^{2}\\
    &-C_{\lambda}\Delta t\int_{\mathcal{M}^{\ast}\times\mathcal{N}}\left( \frac{\tau\Delta t}{\delta^{3}T^{4}}\right)\left( \frac{\tau h}{\delta T^{2}}\right)\,|D_{ht}^{2}z|^{2}+C_{\lambda}\Delta t \int_{\mathcal{M}^{\ast}\times\mathcal{N}}|D_{ht}^{2}z|^{2},
    \end{split}
\end{equation*}
 where we have shifted the discrete time  variable on the first integral above. Hence, taking $\epsilon_{1}(\lambda)$ small enough, for $I_{13}^{(a_{2})}$ we have that
\begin{equation}\label{eq:I13:a2}
\begin{split}
    I_{13}^{(a_{2})}\geq&-\int_{\mathcal{M}^{\ast}\times\mathcal{N}^{\ast}}\left( T\theta(sh)^{2}\mathcal{O}_{\lambda}(1)+\left( \frac{\tau\Delta t}{\delta^{3}T^{4}}\right)\left( \frac{\tau h}{\delta T^{2}}\right)\mathcal{O}_{\lambda}(1)\right)\,|D_{h}z|^{2}\\
    &-C_{\lambda}\int_{\mathcal{M}^{\ast}\times\partial\mathcal{N}^{+}}|D_{h}\taut^{+}z|^{2}+C_{\lambda}\Delta t \int_{\mathcal{M}^{\ast}\times\mathcal{N}}|D_{ht}^{2}z|^{2}.
    \end{split}
\end{equation}

On the other hand, by virtue of Proposition \ref{pro:space:estimate} we note that $D_{h}\alpha_{13}=(sh)^{2}\mathcal{O}_{\lambda}(1)$. Then, this estimate and the Young's inequality for $I_{13}^{(a_{1})}$ yield 
\begin{equation*}
    \begin{split}
      |I_{13}^{(a_{1})}|\leq&C_{\lambda} \int_{\mathcal{M}^{\ast}\times\mathcal{N}}\taut^{-}(s^{-1}(sh)^{2}|D_{t}A_{h}z|^{2}+C_{\lambda}\int_{\mathcal{M}^{\ast}\times\mathcal{N}^{\ast}}s(sh)^{2}|D_{h}z|^{2},
    \end{split}
\end{equation*}
where in the last integral above we have used equation \eqref{eq:tau:menos} to shift the discrete time variable. Now, using that $|D_{t}A_{h}z|^{2}\leq A_{h}(|D_{t}z|^{2})$ and an integration by parts with respect to the average operator $A_{h}$ on the first integral of the above expression, we obtain
\begin{equation}\label{eq:I13:a1}
    \begin{split}
      |I_{13}^{(a_{1})}|\leq
        &C_{\lambda} \left(\int_{\mathcal{M}\times\mathcal{N}}\taut^{-}(s^{-1}(sh)^{2})|D_{t}z|^{2}+h\int_{\partial\mathcal{M}\times\mathcal{N}}\taut^{-}(s^{-1}(sh)^{2})|D_{t}z|^{2}+\int_{\mathcal{M}^{\ast}\times\mathcal{N}^{\ast}}s(sh)^{2}|D_{h}z|^{2}\right).
    \end{split}
\end{equation}
Thus, collecting \eqref{eq:I13:a2} and \eqref{eq:I13:a1} we obtain that $I_{13}^{(a)}$ can be bounded as 
$$
\displaystyle I_{13}^{(a)}\geq -W_{13}-C_{\lambda}\int_{\mathcal{M}\times\partial\mathcal{N}^{+}}|D_{h}\taut^{+}z|^{2}$$
where
\begin{equation}
    \begin{split}
          W_{13}=&\int_{\mathcal{M}^{\ast}\times\mathcal{N}^{\ast}}\left( T\theta(sh)^{2}\mathcal{O}_{\lambda}(1)+\left( \frac{\tau\Delta t}{\delta^{3}T^{4}}\right)\left( \frac{\tau h}{\delta T^{2}}\right)\mathcal{O}_{\lambda}(1)\right)\,|D_{h}z|^{2}\\
        &-C_{\lambda}\Delta t \int_{\mathcal{M}^{\ast}\times\mathcal{N}}|D_{ht}^{2}z|^{2}+C_{\lambda}\int_{\mathcal{M}^{\ast}\times\mathcal{N}^{\ast}}s(sh)^{2}|D_{h}z|^{2}\\
        &+C_{\lambda} \int_{\mathcal{M}\times\mathcal{N}}\taut^{-}(s^{-1}(sh)^{2})|D_{t}z|^{2}-C_{\lambda}h\int_{\partial\mathcal{M}\times\mathcal{N}}\taut^{-}(s^{-1}(sh)^{2})|D_{t}z|^{2}. 
    \end{split}
\end{equation}
\par Finally, by using the Young's inequality on $I_{13}^{(b)}$, and the estimate $\taut^{-}(\alpha_{13})=1+C_{\lambda}\taut^{-}((sh)^{2})$ due to Proposition \ref{pro:space:estimate}, we get
\begin{equation}\label{eq:I13b}
    \begin{split}
    |I_{13}^{(b)}|\leq &\int_{\partial\mathcal{M}\times\mathcal{N}}(C\taut^{-}(s^{-1})+C_{\lambda}\taut^{-}(s^{-1}(sh)^{4})|D_{t}z|^{2}\\
    &+\int_{\partial\mathcal{M}\times\mathcal{N}}(C\taut^{-}(s)+C_{\lambda}\taut^{-}((sh)^{4})t_{r}(|D_{h}\taut^{-}z|^{2})\\
    &=:Y_{13}.
    \end{split}
\end{equation}
Therefore, combining the lower bound for $I_{13}^{(a)}$ and \eqref{eq:I13b} it follows that $I_{13}$ can be estimated as
\begin{equation}\label{eq:I13}
\begin{split}
    I_{13}\geq&X_{13}-Y_{13}-C_{\lambda}\int_{\mathcal{M}^{\ast}\times\partial\mathcal{N}^{+}}|\taut^{+}D_{h}z|^{2}.
    \end{split}
\end{equation}
\par Now we focus on $I_{23}$. Let us set $\alpha_{23}:=rD_{h}^{2}\rho$, then $I_{23}$ can be written as
$$\displaystyle
I_{23}=\int_{\mathcal{M}\times\mathcal{N}}\taut^{-}(\alpha_{23}A_{h}^{2}z)\,D_{t}z.$$

Using the identity $A_{h}^{2}z=z+\frac{h^{2}}{4}D_{h}^{2}z$, $I_{23}$ can be rewritten as
\begin{equation*}
\begin{split}
    I_{23}=&\int_{\mathcal{M}\times\mathcal{N}}\taut^{-}(\alpha_{23})\taut^{-}(z)\,D_{t}z+\frac{h^{2}}{4}\int_{\mathcal{M}\times\mathcal{N}}\taut^{-}(\alpha_{23}\,D_{h}^{2}z)\,D_{t}z=:I_{23}^{(a)}+I_{23}^{(b)}.
    \end{split}
    \end{equation*}
\par We note that the identity $\taut^{-}z\,D_{t}z=\frac{1}{2}D_{t}(|z|^{2})-\frac{\Delta t}{2}|D_{t}z|^{2}$, for $I_{23}^{(a)}$, yields 
\begin{equation*}
\begin{split}
    I_{23}^{(a)}=&\frac{1}{2}\int_{\mathcal{M}\times\mathcal{N}}\taut^{-}(\alpha_{23})\,D_{t}(|z|^{2})-\frac{\Delta t}{2}\int_{\mathcal{M}\times\mathcal{N}}\taut^{-}(\alpha_{23})\,|D_{t}z|^{2}.
    \end{split}
    \end{equation*}
A discrete integration by parts with respect to the time difference operator $D_{t}$ enables us to write
\begin{equation*}
\begin{split}
    I_{23}^{(a)}=&-\int_{\mathcal{M}\times\mathcal{N}}D_{t}(\alpha_{23})\,\taut^{+}(|z|^{2})+\frac{1}{2}\int_{\mathcal{M}\times\partial\mathcal{N}}\taut^{+}(\alpha_{23}|z|^{2})\,n-\frac{\Delta t}{2}\int_{\mathcal{M}\times\mathcal{N}}\taut^{-}(\alpha_{23})\,|D_{t}z|^{2}.
\end{split}
\end{equation*}
Now, using the following inequality $|\taut^{+}z|^{2}\leq C|\taut^{-}z|^{2}+C(\Delta t)^{2}|D_{t}z|^{2}$, Theorem \ref{prop:time:estimate} and Proposition \ref{pro:space:estimate}, for $I_{23}^{(a)}$ we obtain
\begin{equation}\label{eq:I23:a}
\begin{split}
    I_{23}^{(a)}\geq &-\int_{\mathcal{M}\times\mathcal{N}}\taut^{-}\mu\,|\taut^{-}z|^{2}-(\Delta  t)^{2}\int_{\mathcal{M}\times  \mathcal{N}}\taut^{-}(s^{2})(|D_{t}z|^{2})\\
    &-\int_{\mathcal{M}\times\partial\mathcal{N}}\mathcal{O}_{\lambda}(1)\taut^{+}(s^{2}|z|^{2})-\Delta t\int_{\mathcal{M}\times\mathcal{N}}\taut^{-}(s^{2})\mathcal{O}_{\lambda}(1)\,|D_{t}z|^{2},
    \end{split}
\end{equation}
where $\mu:=\left[Ts^{2}\theta\mathcal{O}_{\lambda}(1)+\left(\frac{\tau^{2}\Delta t}{\delta^{4}T^{6}}\right)\mathcal{O}_{\lambda}(1)+\left(\frac{\tau \Delta t}{\delta^{3}T^{4}}\right)\left(\frac{\tau h}{\delta T^{2}}\right)^{3}\right]\mathcal{O}_{\lambda}(1)$.
\par Let us now estimate $I_{23}^{(b)}$. Using a discrete integration by parts for the operator $D_{h}$ we obtain
\begin{equation*}
    I_{23}^{(b)}=-\frac{h^{2}}{4}\int_{\mathcal{M}^{\ast}\times\mathcal{N}}D_{h}(\taut^{-}\alpha_{23}D_{t}z)\taut^{-}(D_{h}z)+\frac{h^{2}}{4}\int_{\partial\mathcal{M}\times\mathcal{N}}\taut^{-}(\alpha_{23})D_{t}z\,t_{r}(\taut^{-}D_{h}z)\,n.
\end{equation*}
By virtue of the product rule for the difference operator $D_{h}$, see Lemma \ref{lem:product:rule}, we write
\begin{equation*}
\begin{split}
    I_{23}^{(b)}=&-\frac{h^{2}}{4}\int_{\mathcal{M}^{\ast}\times\mathcal{N}}\taut^{-}(D_{h}\alpha_{23})A_{h}D_{t}z\,\taut^{-}(D_{h}z)-\frac{h^{2}}{4}\int_{\mathcal{M}^{\ast}\times\mathcal{N}}\taut^{-}(A_{h}\alpha_{23})D^{2}_{ht}z\,\taut^{-}(D_{h}z)\\
    &+\frac{h^{2}}{4}\int_{\partial\mathcal{M}\times\mathcal{N}}\taut^{-}(\alpha_{23})D_{t}z\,t_{r}(\taut^{-}D_{h}z)\,n\\
    =:&I_{23}^{(b_{1})}+I_{23}^{(b_{2})}+I_{23}^{(b_{3})}.
    \end{split}
\end{equation*}
Thus, our goal is to estimate $I_{23}^{(b_{1})}, I_{23}^{(b_{2})}$ and $I_{23}^{(b_{3})}$. We note that $D_{xt}^{2}z\,\taut^{-}(D_{h}z)=\frac{1}{2}D_{t}(|D_{h}z|^{2})-\frac{\Delta t}{2}|D^{2}_{tx}z|^{2}$, thanks to the identity $\taut^{-}z\,D_{t}z=\frac{1}{2}D_{t}(|z|^{2})-\frac{\Delta t}{2}|D_{t}z|^{2}$. Then, $I_{23}^{(b_{2})}$ can be rewritten as
\begin{equation*}
    I_{23}^{(b_{2})}=-\frac{h^{2}}{8}\int_{\mathcal{M}^{\ast}\times\mathcal{N}}\taut^{-}(A_{h}\alpha_{23})D_{t}(|D_{h}z|^{2})+\frac{\Delta t\,h^{2}}{8}\int_{\mathcal{M}\times\mathcal{N}}\taut^{-}(A_{h}\alpha_{23}) 
    |D^{2}_{ht}z|^{2}.
\end{equation*}
Applying a discrete integration by parts for the discrete difference time operator $D_{t}$ on the first integral above, given by \eqref{eq:integral:time:primal}, leads to
\begin{equation}\label{eq:I23:b2:01}
\begin{split}
    I_{23}^{(b_{2})}=&\frac{h^{2}}{8}\int_{\mathcal{M}^{\ast}\times\mathcal{N}}A_{h}D_{t}\alpha_{23}|D_{h}\taut^{+}z|^{2}-\frac{h^{2}}{8}\int_{\mathcal{M}^{\ast}\times\partial\mathcal{N}}\taut^{+}(A_{h}\alpha_{23}|D_{h}z|^{2})\,n\\
    &+\frac{\Delta th^{2}}{8}\int_{\mathcal{M}\times\mathcal{N}}\taut^{-}(A_{h}\alpha_{23})\,|D^{2}_{ht}z|^{2}.
    \end{split}
\end{equation}
We note that using the estimates from Theorem \ref{prop:time:estimate} and Theorem \ref{teo:space:estimate}, for $\tau \Delta t(T^{3}\delta^{2})\leq 1$, it follows that
\begin{equation*}
    \begin{split}
        A_{h}D_{t}\alpha_{23}&=T\taut^{-}(s^{2}\theta)\mathcal{O}_{\lambda}(1)+\frac{\tau^{2}\Delta t}{\delta T^{6}}\mathcal{O}_{\lambda}(1)+\left( \frac{\tau \Delta t}{\delta^{3}T^{4}}\right)\left( \frac{\tau h}{\delta T^{2}}\right)^{3}\mathcal{O}_{\lambda}(1),\\
        A_{h}\alpha_{23}&=s^{2}\mathcal{O}_{\lambda}(1).
    \end{split}
\end{equation*}
Combining the above estimates with \eqref{eq:I23:b2:01}, and using that $|\taut^{+}D_{h}z|^{2}\leq C|\taut^{-}D_{h}z|^{2}+C(\Delta t)^{2}|D_{ht}^{2}z|^{2}$, we have that $I_{23}^{(b_{2})}$ can be estimated as 
\begin{equation}\label{eq:I23:b2}
\begin{split}
    |I_{23}^{(b_{2})}|\leq&\int_{\mathcal{M}^{\ast}\times\mathcal{N}}T\taut^{-}(\theta (sh)^{2})\taut^{-}(|D_{h}z|^{2})+\int_{\mathcal{M}^{\ast}\times\mathcal{N}}\left(\frac{\tau^{2}\Delta t}{\delta T^{6}}\mathcal{O}_{\lambda}(1)+\left( \frac{\tau \Delta t}{\delta^{3}T^{4}}\right)\left( \frac{\tau h}{\delta T^{2}}\right)^{3}\right)\taut^{-}(|D_{h}z|^{2})\\
    &+\int_{\mathcal{M}^{\ast}\times \partial \mathcal{N}}\mathcal{O}_{\lambda}(1)\taut^{+}((sh)^{2}|D_{h}z|^{2})+\Delta t\int_{\mathcal{M}\times\mathcal{N}}\mathcal{O}_{\lambda}(1)\left( \frac{\tau h}{\delta T^{2}}\right)^{2}\,|D^{2}_{ht}z|^{2}\\
    &+C\Delta t\int_{\mathcal{M}^{\ast}\times\mathcal{N}}\left( \frac{\tau \Delta t}{\delta^{2}T^{3}}\right)\left( \frac{\tau h}{\delta T^{2}}\right)|D_{ht}^{2}z|^{2}\\
    &+C\Delta t\int_{\mathcal{M}^{\ast}\times\mathcal{N}}\left[ \left( \frac{\tau^{2}\Delta t}{\delta^{4}T^{6}}\right)+\left( \frac{\tau \Delta t}{\delta^{3}T^{4}}\right)\left(\frac{\tau h}{\delta T^{2}}  \right)^{3}\right]|D^{2}_{ht}z|^{2}.
    \end{split}
\end{equation}
\par On the other hand, we focus now on $I_{23}^{(b_{1})}$. We note that Young's inequality yields
\begin{equation*}
    |I_{23}^{(b_{1})}|\leq C \int_{\mathcal{M}^{\ast}\times\mathcal{N}}h^{2}|\taut^{-}(s^{-1}D_{h}\alpha_{23})|A_{h}D_{t}z|^{2}+\int_{\mathcal{M}^{\ast}\times\mathcal{N}}h^{2}|\taut^{-}(D_{h}\alpha_{23})|\taut^{-}(s|D_{h}z|^{2}).
\end{equation*}
Using that $|A_{h}D_{t}z|^{2}\leq A_{h}(|D_{t}z|^{2})$ on the first integral from the above expression and then a discrete integration by part for the average operator $A_{h}$, we have that
\begin{equation*}
\begin{split}
    |I_{23}^{(b_{1})}|\leq &C \int_{\mathcal{M}\times\mathcal{N}}h^{2}
    |\taut^{-}(s^{-1}A_{h}D_{h}\alpha_{23})|\,|D_{t}z|^{2}+C\int_{\partial\mathcal{M}\times \mathcal{N}}h^{3}|D_{t}z|^{2}t_{r}(\taut^{-}(s^{-1}|D_{h}\alpha_{23}|))\\
    &+\int_{\mathcal{M}^{\ast}\times\mathcal{N}}h^{2}|\taut^{-}(D_{h}\alpha_{23})|\taut^{-}(s|D_{h}z|^{2}).
    \end{split}
\end{equation*}
Finally, using Proposition \ref{pro:space:estimate}, $I_{23}^{(b_{1})}$ can be estimated as follows
\begin{equation}\label{eq:I23:b1}
    \begin{split}
    |I_{23}^{(b_{1})}|\leq &C_{\lambda} \int_{\mathcal{M}\times\mathcal{N}}\taut^{-}(s^{-1}(sh)^{2})\,|D_{t}z|^{2}+C_{\lambda}\int_{\partial\mathcal{M}\times \mathcal{N}}h\taut^{-}(s^{-1}(sh)^{2})|D_{t}z|^{2}\\
    &+C_{\lambda}\int_{\mathcal{M}^{\ast}\times\mathcal{N}}\taut^{-}(s(sh)^{2}|D_{h}z|^{2})
    \end{split}
\end{equation}
\par Similarly, for $I_{23}^{(b_{3})}$ we have
\begin{equation*}
\begin{split}
    |I_{23}^{(b_{3})}|\leq& \int_{\partial\mathcal{M}\times\mathcal{N}}h^{2}\taut^{-}(s^{-1})|\taut^{-}(\alpha_{23})||D_{t}z|^{2}+\int_{\partial\mathcal{M}\times\mathcal{N}}h^{2}\taut^{-}(s)|\taut^{-}(\alpha_{23})|t_{r}(|D_{h}\taut^{-}z|^{2}).
    \end{split}
\end{equation*}
Using the estimates from Proposition \ref{pro:space:estimate}, and shifting the time discrete variable on the last integral above, we obtain
\begin{equation}\label{eq:I23:b3}
\begin{split}
    |I_{23}^{(b_{3})}|\leq& C_{\lambda}\int_{\partial\mathcal{M}\times\mathcal{N}}\taut^{-}(s^{-1}(sh)^{2})|D_{t}z|^{2}+C_{\lambda}\int_{\partial\mathcal{M}\times\mathcal{N}^{\ast}}s(sh)^{2}t_{r}(|D_{h}z|^{2}).
    \end{split}
\end{equation}
Thus, combining estimates \eqref{eq:I13}, \eqref{eq:I23:b2}, \eqref{eq:I23:b1} and \eqref{eq:I23:b3}, and decreasing the parameter $\epsilon_{1}(\lambda)$ such that the term with $|D_{xt}^{2}z|^{2}$ that controls the other similar terms comes from $I_{13}$, we obtain the claimed inequality.
\end{proof}

\subsection{Estimate of $I_{21}$}
The proof of this estimate is similar to Lemma 5.4 from \cite{LOP-2020} since the discrete time variable does not play a major role. In contrast to Lemma 2.3 from \cite{GC-HS-2021}, spatial boundary terms appear. 
\begin{lemma}\label{I21}
Provided $\Delta t \tau (T^3 \delta^2)^{-1}\leq 1$ and $\tau h(\delta T^{2})^{-1}\leq 1$, we have
\begin{equation*}
    \begin{split}        
        I_{21}=&3\tau^{3}\lambda^{4}\int_{\mathcal{M}\times\mathcal{N}^{\ast}}\theta^{3} \phi^{3}(\partial_{x}\psi)^{4}|z|^{2}+\int_{\mathcal{M}\times\mathcal{N}^{\ast}}(s\lambda\phi)^{3}\mathcal{O}(1)|z|^{2}-\tau^{3}\lambda^{3}\int_{\partial \mathcal{M}\times\mathcal{N}^{\ast}} (\theta\phi\partial_{x}\psi)^{3}t_{r}(|A_{h}z|^{2})n\\
        &-X_{21}+Y_{21},
    \end{split}
\end{equation*}
where $\displaystyle X_{21}=\int_{\mathcal{M}\times\mathcal{N}^{\ast}}s^{2}\mathcal{O}_{\lambda}(1)+s^{3}\mathcal{O}_{\lambda}((sh)^{2}) |z|^{2}-\int_{\mathcal{M}^{\ast}\times\mathcal{N}^{\ast}} s\mathcal{O}_{\lambda}((sh)^{2})|D_{h}z|^{2}$ and
\begin{equation*}
    \begin{split}
        Y_{21}:=&\int_{\partial \mathcal{M}\times\mathcal{N}^{\ast}}\left(s^{2}\mathcal{O}_{\lambda}(1)+s^{3}\mathcal{O}_{\lambda}((sh)^{2})\right)t_{r}(|A_{h}z|^{2})n-\int_{\partial \mathcal{M}\times\mathcal{N}^{\ast}}s\mathcal{O}_{\lambda}((sh)^{2})t_{r}(|D_{h}z|^{2})\,n.
        \end{split}
    \end{equation*}
\end{lemma}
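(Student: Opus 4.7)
The structure of the argument mirrors that of $I_{11}$ and $I_{12}$, but with one extra order in $s$ and an additional spatial integration by parts. The plan is as follows.

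First, I would consolidate the two $\taut^{-}$ factors in $I_{21}=2\int_{\mathcal{M}\times\mathcal{N}}\taut^{-}(rD_h^2\rho\,A_h^2 z)\,\taut^{-}(rD_hA_h\rho\,D_hA_h z)$ into a single $\taut^{-}$ of the product and shift to $\mathcal{N}^{\ast}$ via \eqref{eq:tau:menos} (with $f\equiv 1$), obtaining $I_{21}=2\int_{\mathcal{M}\times\mathcal{N}^{\ast}}\beta_{21}\,A_h^2 z\,D_hA_h z$ with $\beta_{21}:=r^2 D_h^2\rho\,D_hA_h\rho$. Since Lemma \ref{lem:product:rule} applied to $u=v=A_h z$ yields $2A_h^2 z\,D_hA_h z=D_h(|A_h z|^2)$, a discrete integration by parts in $x$ (Proposition \ref{pro:integral:space}) produces
$$I_{21}=-\int_{\mathcal{M}^{\ast}\times\mathcal{N}^{\ast}}D_h\beta_{21}\,|A_h z|^2+\int_{\partial\mathcal{M}\times\mathcal{N}^{\ast}}\beta_{21}\,t_r(|A_h z|^2)\,n.$$

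Next, to trade $|A_h z|^2$ (which lives on $\mathcal{M}^{\ast}$) for the desired $|z|^2$ (on $\mathcal{M}$), I would invoke the averaging identity in Lemma \ref{lem:product:rule}, namely $|A_h z|^2=A_h(|z|^2)-\tfrac{h^2}{4}|D_h z|^2$. The $\tfrac{h^2}{4}|D_h z|^2$ correction, paired with the bound $D_h\beta_{21}=s^3\mathcal{O}_\lambda(1)$ that follows from Theorem \ref{teo:space:estimate}, delivers exactly the $\int s\,\mathcal{O}_\lambda((sh)^2)|D_h z|^2$ piece of $X_{21}$. For the remaining $A_h(|z|^2)\,D_h\beta_{21}$ term I would apply the averaging integration by parts (again Proposition \ref{pro:integral:space}) to move the $A_h$ onto $D_h\beta_{21}$, producing the interior contribution $-\int_{\mathcal{M}\times\mathcal{N}^{\ast}}A_hD_h\beta_{21}\,|z|^2$ plus a small boundary piece of order $h\,|z|^2\,t_r(D_h\beta_{21})$.

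The core of the estimate is then the expansion of $A_hD_h\beta_{21}$. Writing $\beta_{21}=r^2(A_h^0D_h^2\partial_x^0\rho)(A_h^1D_h^1\rho)$ and applying Theorem \ref{teo:space:estimate} with $(p,l,\beta)=(1,1,0)$ gives $A_hD_h\beta_{21}=\partial_x(r^2\partial_x^2\rho\,\partial_x\rho)+s^3\mathcal{O}_\lambda((sh)^2)$, and a direct Leibniz computation based on $r\partial_x^k\rho\sim(-s\lambda\phi\partial_x\psi)^k$ (Lemma \ref{3.7}) yields $\partial_x(r^2\partial_x^2\rho\,\partial_x\rho)=-3\,s^3\lambda^4\phi^3(\partial_x\psi)^4+(s\lambda\phi)^3\mathcal{O}(1)$. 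Plugging this into $-\int A_hD_h\beta_{21}\,|z|^2$ produces the announced leading positive term $3\tau^3\lambda^4\int\theta^3\phi^3(\partial_x\psi)^4|z|^2$ together with the stated $\lambda^3$-bounded remainder and the $s^3\mathcal{O}_\lambda((sh)^2)|z|^2$ error completing $X_{21}$. In parallel, Proposition \ref{pro:space:estimate} applied to each factor of $\beta_{21}$ produces $\beta_{21}=-s^3\lambda^3\phi^3(\partial_x\psi)^3+s^2\mathcal{O}_\lambda(1)+s^3\mathcal{O}_\lambda((sh)^2)$, giving the principal boundary term $-\tau^3\lambda^3\int_{\partial\mathcal{M}\times\mathcal{N}^{\ast}}(\theta\phi\partial_x\psi)^3\,t_r(|A_h z|^2)\,n$ and the boundary remainders inside $Y_{21}$.

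The main obstacle, absent in the Dirichlet setting of \cite{GC-HS-2021}, is the handling of the secondary boundary term $\tfrac{h}{2}|z|^2\,t_r(D_h\beta_{21})$ produced by the second (averaging) integration by parts: it must be rephrased in terms of the trace quantities $t_r(|A_h z|^2)$ and $t_r(|D_h z|^2)$ that $Y_{21}$ is stated with. I would use the endpoint identity $z=A_h z\mp\tfrac{h}{2}D_h z$ to estimate $|z|^2\leq 2\,t_r(|A_h z|^2)+\tfrac{h^2}{2}\,t_r(|D_h z|^2)$; combined with $h\,D_h\beta_{21}=s^2\mathcal{O}_\lambda(sh)$ this splits the secondary boundary term precisely into the $s^2\mathcal{O}_\lambda(1)\,t_r(|A_h z|^2)$ contribution and the $s\mathcal{O}_\lambda((sh)^2)\,t_r(|D_h z|^2)$ negative contribution making up $Y_{21}$.
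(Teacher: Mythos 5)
Your proposal is correct and follows essentially the same route as the paper: the product-rule identity $2A_h^2z\,D_hA_hz=D_h(|A_hz|^2)$, a spatial integration by parts, the identity $|A_hz|^2=A_h(|z|^2)-\tfrac{h^2}{4}|D_hz|^2$ followed by the averaging integration by parts, and then the weight estimates of Proposition \ref{pro:space:estimate} and Corollary \ref{3.8} for $A_hD_h\alpha_{21}$, $D_h\alpha_{21}$ and $\alpha_{21}$. The only difference is that you explicitly convert the residual boundary term $\tfrac{h}{2}\,t_r(D_h\alpha_{21})\,|z|^2$ into the trace quantities $t_r(|A_hz|^2)$ and $t_r(|D_hz|^2)$ via $z=A_hz\mp\tfrac{h}{2}D_hz$, a step the paper leaves implicit but which is needed to match the stated form of $Y_{21}$.
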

\begin{proof}
Let us set $\alpha_{21}:=r^{2}D_{h}^{2}\rho\,A_{h}D_{h}\rho$. By regarding a shift with respect to the discrete time variable, $I_{21}$ is given by $\displaystyle I_{21}=\int_{\mathcal{M}\times\mathcal{N}^{\ast}} 2\alpha_{21}\,A_{h}^{2}z\,D_{h}A_{h}z$. Then, by using the identity $D_{h}(|A_{h}z|^{2})=2A_{h}^{2}z\,D_{h}A_{h}z$ due to Lemma \ref{lem:product:rule}, we obtain $\displaystyle I_{21}=\int_{\mathcal{M}\times\mathcal{N}^{\ast}} \alpha_{21}\,D_{h}((A_{h}z)^{2})$. After a spatial discrete integration by parts we have
\begin{equation*}
    \begin{split}
        I_{21}
        =&-\int_{\mathcal{M}^{\ast}\times\mathcal{N}^{\ast}}D_{h}(\alpha_{21})(A_{h}z)^{2}+\int_{\partial \mathcal{M}\times\mathcal{N}^{\ast}}\alpha_{21}\,t_{r}\left((A_{h}z)^{2}\right)\,n=:I_{21}^{(a)}+I_{21}^{(b)}.
        \end{split}
        \end{equation*}
Note that by using the identity $|A_{h}z|^{2}=A_{h}(|z|^{2})-\frac{h^{2}}{4}|D_{h}z|^{2}$ it follows that
\begin{equation*}
\begin{split}
        I_{21}^{(a)}
        =&-\int_{\mathcal{M}^{\ast}\times\mathcal{N}^{\ast}} D_{h}(\alpha_{21})\,A_{h}(z^{2})+\frac{h^{2}}{4}\int_{\mathcal{M}^{\ast}\times\mathcal{N}^{\ast}} D_{h}(\alpha_{21})\,|D_{h}z|^{2}.
    \end{split}
    \end{equation*}
    It then follows from Proposition \ref{pro:integral:space} that $I_{21}^{(a)}$ can be written as
    \begin{equation*}
    \begin{split}
    I_{21}^{(a)}=&-\int_{\mathcal{M}\times\mathcal{N}^{\ast}} A_{h}D_{h}(\alpha_{21})\,|z|^{2}-\frac{h}{2}\int_{\partial \mathcal{M}\times\mathcal{N}^{\ast}}t_{r}(D_{h}\alpha_{21})\,|z|^{2}+\frac{h^{2}}{4}\int_{\mathcal{M}^{\ast}\times\mathcal{N}^{\ast}} D_{h}(\alpha_{21})\,|D_{h}z|^{2}.
        \end{split}
        \end{equation*}
On the other hand, we have by Proposition \ref{pro:space:estimate} and Corollary \ref{3.8} that $A_{h}D_{h}(\alpha_{21})=-3s^{3}\lambda^{4}\phi^{3}(\partial_{x}\psi)^{4}+(s\lambda\phi)^{3}\mathcal{O}(1)+s^{2}\mathcal{O}_{\lambda}(1)+s^{3}\mathcal{O}_{\lambda}((sh)^{2})$ and
$ D_{h}(\alpha_{21})=s^{3}\mathcal{O}_{\lambda}(1)$. Therefore, with these estimates, $I_{21}^{(a)}$ can be estimated as
\begin{equation}
\begin{split}
       I_{21}^{(a)}=&
\int_{\mathcal{M}\times\mathcal{N}^{\ast}}3s^{3}\lambda^{4} \phi^{3}(\partial_{x}\psi)^{4}|z|^{2}+\int_{\mathcal{M}\times\mathcal{N}^{\ast}}(s\lambda\phi)^{3}\mathcal{O}(1)|z|^{2}-\int_{\mathcal{M}\times\mathcal{N}^{\ast}}(s^{2}\mathcal{O}_{\lambda}(1)+s^{3}\mathcal{O}_{\lambda}((sh)^{2}) |z|^{2}\\
&-\int_{\mathcal{M}^{\ast}\times\mathcal{N}^{\ast}} s\mathcal{O}_{\lambda}((sh)^{2})|D_{h}z|^{2}-\int_{\partial \mathcal{M}\times\mathcal{N}^{\ast}}s^{2}\mathcal{O}_{\lambda}(sh)|z|^{2}.
\end{split}
\end{equation}
Finally, for the estimate of $I_{21}^{(b)}$, we note that
 \begin{equation*}
\begin{split}
    I_{21}^{(b)}=&\int_{\partial \mathcal{M}\times\mathcal{N}^{\ast}}\left(-(s\lambda\phi\partial_{x}\psi)^{3}+s^{2}\mathcal{O}_{\lambda}(1)+s^{3}\mathcal{O}_{\lambda}((sh)^{2})\right)\,t_{r}(|A_{h}z|^{2}))\,n,
    \end{split}
\end{equation*}
since $\alpha_{21}=-(s\lambda\partial_{x}\psi)^{3}+s^{2}\mathcal{O}_{\lambda}(1)+s^{3}\mathcal{O}_{\lambda}((sh)^{2})$, due to  Proposition \ref{pro:space:estimate}. Then, $I_{21}$ can be estimated as
\begin{equation}
    \begin{split}        
        I_{21}=&\int_{\mathcal{M}\times\mathcal{N}^{\ast}}3s^{3}\lambda^{4} \phi^{3}(\partial_{x}\psi)^{4}|z|^{2}+\int_{\mathcal{M}\times\mathcal{N}^{\ast}}(s\lambda\phi)^{3}\mathcal{O}(1)|z|^{2}-X_{21}+Y_{21}.
    \end{split}
\end{equation}
\end{proof}

\subsection{Estimate of $I_{22}$}
\begin{lemma}\label{I22}
Provided $\Delta t \tau (T^3 \delta^2)^{-1}\leq 1$ and $\tau h(\delta T^{2})^{-1}\leq 1$, we have
\begin{equation*}
    \begin{split}
    I_{22}
    \geq &-\tau^{3}\lambda^{4}\int_{\mathcal{M}\times\mathcal{N}^{\ast}}\theta^{3}\phi^{3}(\partial_{x}\psi)^{4}|z|^{2}+\tau^{3}\lambda^{3}\int_{\mathcal{M}\times\mathcal{N}^{\ast}} \theta^{3}\phi^{2}(\partial_{x}\psi)^{2}\partial_{x}^{2}\psi|z|^{2}-X_{22}+Y_{22},
    \end{split}
\end{equation*}
where 
\begin{equation*}
    X_{22}:=\int_{\mathcal{M}\times\mathcal{N}^{\ast} }\left(s^{2}\mathcal{O}_{\lambda}(1)+s^{3}\mathcal{O}_{\lambda}((sh)^{2})\right)|z|^{2}
    +\int_{\mathcal{M}\times\mathcal{N}^{\ast}} s\mathcal{O}_{\lambda}((sh)^{2})|z|^{2}+\int_{\mathcal{M}^{\ast}\times\mathcal{N}^{\ast}}s \mathcal{O}_{\lambda}((sh)^{2})|D_{h}z|^{2},
\end{equation*}
and $\displaystyle Y_{22}:=\int_{\partial \mathcal{M}\times\mathcal{N}^{\ast} }s\mathcal{O}_{\lambda}((sh)^{2})|z|^{2}+\int_{\partial \mathcal{M}\times\mathcal{N}^{\ast} }s\mathcal{O}_{\lambda}((sh)^{2})t_{r}(|D_{h}z|^{2})\,n$.
\end{lemma}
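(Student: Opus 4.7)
The plan is to follow very closely the pattern used for $I_{21}$ in Lemma \ref{I21}: first use the time-shift identity \eqref{eq:tau:menos} (combined with the pointwise multiplicativity of $\taut^{-}$) to rewrite $I_{22}$ as an integral over $\mathcal{M}\times\mathcal{N}^{\ast}$, extract a pointwise Carleman multiplier, and then decompose the averaging operator. Concretely, setting $\alpha_{22} := -2\,r D_h^2 \rho \cdot s\partial_x^2\phi$, one obtains $I_{22} = \int_{\mathcal{M}\times\mathcal{N}^{\ast}} \alpha_{22}\,z\,A_h^2 z$. Using the identity $A_h^2 z = z + \tfrac{h^2}{4}D_h^2 z$ from Lemma \ref{lem:product:rule}, this splits as $I_{22} = I_{22}^{(a)} + I_{22}^{(b)}$ with $I_{22}^{(a)} = \int \alpha_{22}\,|z|^2$ and $I_{22}^{(b)} = \tfrac{h^2}{4}\int \alpha_{22}\, z\,D_h^2 z$.

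The first part $I_{22}^{(a)}$ delivers the dominant $\tau^3$-terms. The pointwise expansion follows by combining $\partial_x^2\phi = \lambda^2(\partial_x\psi)^2\phi + \lambda(\partial_x^2\psi)\phi$ with Lemma \ref{3.7} (case $\alpha=2$, $\beta=0$), which gives $r\partial_x^2\rho = s^2\lambda^2\phi^2(\partial_x\psi)^2 + s\mathcal{O}_\lambda(1)$, and with Proposition \ref{pro:space:estimate}, which adds the discretization error, producing $rD_h^2\rho = s^2\lambda^2\phi^2(\partial_x\psi)^2 + s\mathcal{O}_\lambda(1) + s^2\mathcal{O}_\lambda((sh)^2)$. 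Multiplying these expansions, the two leading contributions proportional to $s^3\lambda^4\phi^3(\partial_x\psi)^4$ and $s^3\lambda^3\phi^3(\partial_x\psi)^2\partial_x^2\psi$ emerge with the signs required by the statement, while the remaining cross terms are of order $s^2\mathcal{O}_\lambda(1)$ and $s^3\mathcal{O}_\lambda((sh)^2)$ and so fit exactly into the interior part of $X_{22}$.

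For $I_{22}^{(b)}$ the strategy is to integrate by parts via Proposition \ref{pro:integral:space} to transfer one $D_h$ off $D_h^2 z$, producing an interior integral of $D_h(\alpha_{22} z)\,D_h z$ plus a trace contribution at $\partial\mathcal{M}$. The product rule of Lemma \ref{lem:product:rule} splits $D_h(\alpha_{22}z) = D_h(\alpha_{22})\,A_h z + A_h(\alpha_{22})\,D_h z$, and Theorem \ref{teo:space:estimate} together with Lemma \ref{lem:space:estimate} control both $A_h\alpha_{22}$ and $D_h\alpha_{22}$ at order $s^3\mathcal{O}_\lambda(1)$. Because of the prefactor $h^2$, after a further discrete integration by parts to convert $A_h z\, D_h z$ back to $|z|^2$ (plus a trace piece), every interior remainder picks up an extra $(sh)^2$-factor, fitting into the $s\mathcal{O}_\lambda((sh)^2)|D_h z|^2$ and $s^3\mathcal{O}_\lambda((sh)^2)|z|^2$ slots of $X_{22}$, while the induced boundary terms populate $Y_{22}$ in the form $s\mathcal{O}_\lambda((sh)^2)|z|^2$ and $s\mathcal{O}_\lambda((sh)^2)\,t_r(|D_h z|^2)$.

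The main obstacle in this argument, and indeed in the whole family of $I_{ij}$ estimates, is the careful routing of the two distinct error registers: the $\mathcal{O}_\lambda(1)$ correction from Lemma \ref{3.7} and the $(sh)^2$-correction from the spatial discretization in Proposition \ref{pro:space:estimate}. These combine to generate a zoo of mixed cross terms that must be systematically allocated between $X_{22}$ (interior, with powers $s^2$ and $s^3(sh)^2$) and $Y_{22}$ (boundary, of the same $(sh)^2$-scaling), while ensuring that no rogue term of order $s^3\lambda^4$ contaminates the two principal $\tau^3$-contributions that the lemma isolates.
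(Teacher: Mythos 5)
Your proposal follows essentially the same route as the paper's own proof: the same rewriting of $I_{22}$ with the multiplier $\alpha_{22}=s\,rD_h^2\rho\,\partial_x^2\phi$ after a time shift, the same splitting via $A_h^2z=z+\tfrac{h^2}{4}D_h^2z$, the same pointwise expansion of $rD_h^2\rho$ and $\partial_x^2\phi$ to isolate the two $\tau^3$-terms, and the same double discrete integration by parts on the $h^2$-remainder to route the $(sh)^2$-errors into $X_{22}$ and the traces into $Y_{22}$. The only discrepancies are cosmetic (the sign/power of $\phi$ on the indefinite-sign $\lambda^3\partial_x^2\psi$ term, which is absorbed for large $\lambda$ anyway, and the exact $s$-power attached to one $(sh)^2$-remainder), so the argument is correct and matches the paper.
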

\begin{proof}
Let us set $\alpha_{22}:=s\,r\,D_{h}^{2}\rho\,\partial_{x}^{2}\phi$. We shift the time variable to obtain $\displaystyle I_{22}=-
    \int_{\mathcal{M}\times\mathcal{N}^{\ast}}2\alpha_{22}\, z\,A_{h}^{2}z$. Then, by using the identity $A_{h}^{2}z=z+\frac{h^{2}}{4}D_{h}^{2}z$, we write
\begin{equation*}
\begin{split}
I_{22}
    =&-\int_{\mathcal{M}\times\mathcal{N}^{\ast}}2\alpha_{22}|z|^{2}
    -\frac{h^{2}}{2}\int_{\mathcal{M}\times\mathcal{N}^{\ast}}\alpha_{22} z\,D_{h}^{2}z=:I_{22}^{(a)}+I_{22}^{(b)}.
    \end{split}
    \end{equation*}
By virtue of Proposition \ref{pro:space:estimate} and Lemma \ref{3.7} we note that
$rD_{h}^{2}\rho=(s\lambda\phi)^{2}(\partial_{x}\psi)^{2}+s\mathcal{O}_{\lambda}(1)+s ^{2}\mathcal{O}_{\lambda}((sh)^{2})$, and in addition to $\partial_{x}^{2}\phi=\lambda^{2}(\partial_{x}\psi)^{2}\phi+\lambda\phi\partial_{x}^{2}\psi$, we get that
\begin{equation}\label{eq:alpha:22}
\begin{split}
    \alpha_{22}=&s^{3}\lambda^{4}\phi^{3}(\partial_{x}\psi)^{4}+s^{3}\lambda^{3}\phi^{2}(\partial_{x}\psi)^{2}\partial_{x}^{2}\psi+s^{2}\mathcal{O}_{\lambda}(1)+s^{3}\mathcal{O}_{\lambda}((sh)^{2})=s^{3}\mathcal{O}_{\lambda}(1).
    \end{split}
    \end{equation}
    Then, using \eqref{eq:alpha:22}, for $I_{22}^{(a)}$ we get
    \begin{equation}\label{eq:I22:a}
        I_{22}^{(a)}=-\int_{\mathcal{M}\times\mathcal{N}^{\ast}}s^{3}\lambda^{4}\phi^{3}(\partial_{x}\psi)^{4}|z|^{2}-\int_{\mathcal{M}\times\mathcal{N}^{\ast}}\left( s^{3}\lambda^{3}\phi^{2}(\partial_{x}\psi)^{2}\partial_{x}^{2}\psi+s^{2}\mathcal{O}_{\lambda}(1)+s^{3}\mathcal{O}_{\lambda}((sh)^{2})\right)|z|^{2}.
    \end{equation}
    Similarly, an integration by parts for $I_{22}^{(b)}$ yields 
    \begin{equation*}
        \begin{split}
    I_{22}^{(b)}=&\frac{h^{2}}{2}\int_{\mathcal{M}^{\ast}\times\mathcal{N}^{\ast}} D_{h}(\alpha_{22}\, z)\,D_{h}z-\frac{h^{2}}{2}\int_{\partial \mathcal{M}\times\mathcal{N}^{\ast} }\alpha_{22}\, z\,t_{r}(D_{h}z)\,n=:I_{22}^{(b_{1})}+I_{22}^{(b_{2})}.
    \end{split}
    \end{equation*}
Note that by using \eqref{eq:alpha:22} and Young's inequality, $I_{22}^{(b_{2})}$ can be estimated as
\begin{equation}\label{eq:I22:b2}
    |I_{22}^{(b_{2})}|\leq \int_{\partial \mathcal{M}\times\mathcal{N}^{\ast} }s|\mathcal{O}_{\lambda}((sh)^{2})|\,|z|^{2}+ \int_{\partial \mathcal{M}\times\mathcal{N}^{\ast}}s|\mathcal{O}_{\lambda}((sh)^{2})|\,t_{r}(|D_{h}z|^{2}).
\end{equation}
Now, for $I_{22}^{(b_{1})}$, by using Lemma \ref{lem:product:rule} we write $D_{h}(|z|^{2})=2D_{h}z\,A_{h}z$. Hence, 
      a discrete integration by parts with respect to the difference operator $D_{h}$ yields 
    \begin{equation*}
    \begin{split}
    I_{22}^{(b_{1})}=&-\frac{h^{2}}{8}\int_{\mathcal{M}\times\mathcal{N}^{\ast} } D^{2}_{h}(\alpha_{22})|z|^{2}+\frac{h^{2}}{8}\int_{\partial \mathcal{M}\times\mathcal{N}^{\ast}} |z|^{2}t_{r}(D_{h}\alpha_{22})n+\frac{h^{2}}{4}\int_{\mathcal{M} ^{\ast}\times\mathcal{N}^{\ast}} A_{h}(\alpha_{22})|D_{h}z|^{2}.
    \end{split}
\end{equation*}
Thanks to $\partial_{x}^{2}\phi=\mathcal{O}_{\lambda}(1)$, applying Lemma \ref{lem:product:rule}, and then Lemma \ref{lem:space:estimate} and Proposition \ref{pro:space:estimate} it follows that $A_{h}(\alpha_{22})=s^{3}\mathcal{O}_{\lambda}(1)$, $D_{h}^{2}\alpha_{22}=s^{3}\mathcal{O}_{\lambda}(1)$ and $D_{h}\alpha_{22}=s^{3}\mathcal{O}_{\lambda}(1)$. We thus have
\begin{equation}\label{eq:I22:b1}
    \begin{split}
     I_{22}^{(b_{1})} =&-\int_{\mathcal{M}\times\mathcal{N}^{\ast}}s \mathcal{O}_{\lambda}((sh)^{2})|z|^{2}+\int_{\mathcal{M}^{\ast}\times\mathcal{N}^{\ast}}s \mathcal{O}_{\lambda}((sh)^{2})|D_{h}z|^{2}+\int_{\partial\mathcal{M}\times\mathcal{N}^{\ast} }s\mathcal{O}_{\lambda}((sh)^{2}) |z|^{2}.
    \end{split}
\end{equation}
Therefore, combining \eqref{eq:I22:a}, \eqref{eq:I22:b2} and \eqref{eq:I22:b1} we obtain the desired result.
\end{proof}

\subsection{Estimate of $I_{31}$}
\begin{lemma}\label{I31}
Provided $\Delta t \tau (T^3 \delta^2)^{-1}\leq 1$ and $\tau h(\delta T^{2})^{-1}\leq 1$, we have
\begin{equation*}
\begin{split}
    I_{31}\geq &X_{31}+Y_{31}
    ,
    \end{split}
\end{equation*}
where $\displaystyle X_{31}:=\int_{\mathcal{M}\times\mathcal{N}^{\ast}}\lambda\varphi\partial_{t}\theta s\mathcal{O}_{\lambda}(1)\,|z|^{2}+\int_{\mathcal{M}^{\ast}\times\mathcal{N}^{\ast}}s\mathcal{O}_{\lambda}(1)\,|D_{h}z|^{2}$ and
\begin{equation*}
    Y_{31}:=\int_{\partial \mathcal{M}\times\mathcal{N}^{\ast}}\lambda\varphi\partial_{t}\theta s\mathcal{O}_{\lambda}(1)\,|z|^{2} n- h\int_{\partial \mathcal{M}\times\mathcal{N}^{\ast}}\lambda\varphi\partial_{t}\theta s\mathcal{O}_{\lambda}(1)\,|z|^{2}-h\int_{\partial \mathcal{M}\times\mathcal{N}^{\ast}}\lambda\varphi\partial_{t}\theta s\mathcal{O}_{\lambda}(1)t_{r}(|D_{h}z|^{2}).
\end{equation*}
\end{lemma}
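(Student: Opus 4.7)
The plan is to reduce $I_{31}$ to an integral over $\mathcal{M}\times\mathcal{N}^*$ with the time operator $\taut^-$ removed, then exploit the discrete chain/IBP identities to convert $z\,A_hD_h z$ into a derivative of $z^2$ modulo controlled error terms. First I would use \eqref{eq:tau:menos} to shift the discrete time variable, writing
\begin{equation*}
I_{31}=-2\tau\int_{\mathcal{M}\times\mathcal{N}^*} \varphi\,\theta'(t)\,z\,\bigl(rD_hA_h\rho\bigr)\,D_hA_h z,
\end{equation*}
where $\varphi$ is time-independent so $\taut^+\varphi=\varphi$. Setting $\beta:=-2\tau\varphi\theta'\,(rD_hA_h\rho)$ gives $I_{31}=\int_{\mathcal{M}\times\mathcal{N}^*}\beta\, z\, A_hD_h z$, using the commutation $D_hA_h=A_hD_h$.

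Next I would apply the averaged integration by parts from Proposition \ref{pro:integral:space} to move $A_h$ across, then use Lemma \ref{lem:product:rule} to split $A_h(\beta z)=A_h\beta\,A_hz+\tfrac{h^2}{4}D_h\beta\,D_hz$, and finally use the identity $2A_hz\,D_hz=D_h(z^2)$ (also from Lemma \ref{lem:product:rule}) followed by a spatial IBP. This yields the decomposition
\begin{equation*}
I_{31}=-\tfrac{1}{2}\int_{\mathcal{M}\times\mathcal{N}^*} D_hA_h\beta\cdot z^2+\tfrac{h^2}{4}\int_{\mathcal{M}^*\times\mathcal{N}^*} D_h\beta\,|D_hz|^2+\tfrac{1}{2}\int_{\partial\mathcal{M}\times\mathcal{N}^*} A_h\beta\,t_r(z^2)\,n-\tfrac{h}{2}\int_{\partial\mathcal{M}\times\mathcal{N}^*}\beta\, z\,t_r(D_hz).
\end{equation*}
At this point, Proposition \ref{pro:space:estimate} and Theorem \ref{teo:space:estimate} provide the key estimates $rD_hA_h\rho=-s\lambda\psi'\phi+s\mathcal{O}_\lambda((sh)^2)$, $A_h\beta=\tau\theta'\varphi\,s\mathcal{O}_\lambda(1)$, $D_h\beta=\tau\theta'\varphi\,s\mathcal{O}_\lambda(1)$, and $D_hA_h\beta=\tau\theta'\varphi\,s\mathcal{O}_\lambda(1)$, so that the bulk $|z|^2$ term contributes $\int\lambda\varphi\partial_t\theta\, s\,\mathcal{O}_\lambda(1)|z|^2$, the $(h^2)D_h\beta\,|D_hz|^2$ term contributes $\int s\mathcal{O}_\lambda(1)|D_hz|^2$ under the assumption $\tau h(\delta T^2)^{-1}\leq 1$, and the two boundary pieces, after using Young's inequality on the second one, give the terms collected in $Y_{31}$.

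The main obstacle is bookkeeping: since $\theta'$ changes sign on $[0,T]$ the dominant bulk integrand is genuinely sign-indefinite, so one cannot simply drop it with a Cauchy--Schwarz; instead the result must absorb it as an $\mathcal{O}_\lambda(1)$-type term to be controlled later at the global Carleman level by the coercive $\tau^3\lambda^4\int\theta^3\phi^3|z|^2$ arising from $I_{21}+I_{22}$. Care is also required with the factor $\tau h(\delta T^2)^{-1}\leq 1$ when absorbing the $(sh)^2$ remainders from $rD_hA_h\rho$ into $s\mathcal{O}_\lambda(1)$, and with the $h$-prefactor in the last boundary term so that after Young's inequality its contribution has the form $s\mathcal{O}_\lambda((sh)^2)$ claimed in $Y_{31}$.
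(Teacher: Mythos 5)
Your argument follows the paper's proof essentially verbatim: shift the time variable, integrate by parts in $A_h$ via Proposition \ref{pro:integral:space}, split $A_h(\beta z)$ with Lemma \ref{lem:product:rule}, use $2A_hz\,D_hz=D_h(|z|^2)$ and a spatial integration by parts, and then invoke Proposition \ref{pro:space:estimate} and Theorem \ref{teo:space:estimate} together with Young's inequality on the remaining boundary term; your bookkeeping (including the correct $\tfrac{h^2}{4}D_h\beta$ coefficient on the $|D_hz|^2$ term and the explicit $h$-prefactor on the last boundary piece) matches, and your remark that the sign-indefinite $\theta'$ bulk term is deferred to the global absorption is exactly how the paper treats $X_{31}$. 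The only slip is cosmetic: the last boundary contribution in $Y_{31}$ carries an explicit factor $h$, not the $s\mathcal{O}_\lambda((sh)^2)$ form you mention in your closing sentence.
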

\begin{proof}
Let us set $\alpha_{31}:=-\lambda\varphi\theta'rD_{h}A_{h}\rho$. Then, shifting the time variable for $I_{31}$ yields $\displaystyle I_{31}=\int_{\mathcal{M}\times\mathcal{N}^{\ast}}2\alpha_{31}z\,D_{h}A_{h}z$. A discrete integration by parts with respect to the average operator $A_{h}$ leads to
\begin{equation*}
\begin{split}
    I_{31}=&\int_{\mathcal{M}^{\ast}\times\mathcal{N}^{\ast}}2A_{h}(\alpha_{31}z)D_{h}z-\frac{h}{2}\int_{\partial \mathcal{M}\times\mathcal{N}^{\ast}}2\alpha_{31}\,z\,t_{r}(D_{h}z)=:I_{31}^{(a)}+I_{31}^{(b)}.
    \end{split}
\end{equation*}
From Lemma \ref{lem:product:rule} and the identity $2D_{h}A_{h}z=D_{h}(|z|^{2})$, for $I_{31}^{(a)}$, we have
\begin{equation*}
\begin{split}
    I_{31}^{(a)}
    =&\int_{\mathcal{M}^{\ast}\times\mathcal{N}^{\ast}}A_{h}(\alpha_{31})\,D_{h}(|z|^{2})+\int_{\mathcal{M}^{\ast}\times\mathcal{N}^{\ast}}2D_{h}(\alpha_{31})\,|D_{h}z|^{2}.
    \end{split}
\end{equation*}
A discrete integration by parts with respect to the difference operator $D_{h}$, on the first integral above, yields
\begin{equation*}
\begin{split}
    I_{31}^{(a)}=&-\int_{\mathcal{M}\times\mathcal{N}^{\ast}}D_{h}A_{h}(\alpha_{31})\,|z|^{2}+\int_{\partial\mathcal{M} \times\mathcal{N}^{\ast}}|z|^{2}\,t_{r}(A_{h}\alpha_{31})\, n+\int_{\mathcal{M}^{\ast}\times\mathcal{N}^{\ast}}2D_{h}(\alpha_{31})\,|D_{h}z|^{2}
    \end{split}
\end{equation*}
\par On the other hand, the Young's inequality applying on $I_{31}^{(b)}$ yields
\begin{equation*}
    |I_{31}^{(b)}|\leq \frac{h}{2}\int_{\partial \mathcal{M}\times\mathcal{N}^{\ast}}|\alpha_{31}| (|z|^{2}+t_{r}(|D_{h}z|^{2})).
\end{equation*}
We note that $e^{s\varphi}\partial_{t}(e^{-s\varphi})=-\lambda\varphi\partial_{t}\theta$ and by Theorem \ref{teo:space:estimate} we have that $rA_{h}D_{h}\rho=s\mathcal{O}_{\lambda}(1)$. Thus, for $I_{31}^{(a)}$ we have the following estimate
\begin{equation}
\begin{split}
    I_{31}^{(a)}=&\int_{\mathcal{M}\times\mathcal{N}^{\ast}}\lambda\varphi\partial_{t}\theta s\mathcal{O}_{\lambda}(1)\,|z|^{2}+\int_{\partial \mathcal{M}\times\mathcal{N}^{\ast}}\lambda\varphi\partial_{t}\theta s\mathcal{O}_{\lambda}(1)\,|z|^{2} n+\int_{\mathcal{M}^{\ast}\times\mathcal{N}^{\ast}}s\mathcal{O}_{\lambda}(1)\,|D_{h}z|^{2}
    \end{split}
\end{equation}
and for $I_{31}^{(b)}$ 
\begin{equation}
    |I_{31}^{(b)}|\leq h\int_{\partial \mathcal{M}\times\mathcal{N}^{\ast}}\lambda\varphi\partial_{t}\theta s\mathcal{O}_{\lambda}(1)\,|z|^{2}+h\int_{\partial \mathcal{M}\times\mathcal{N}^{\ast}}\lambda\varphi\partial_{t}\theta s\mathcal{O}_{\lambda}(1)t_{r}(|D_{h}z|^{2}).
\end{equation}
Therefore, $I_{31}$ can be estimated as $\displaystyle I_{31}\geq \int_{\mathcal{M}\times\mathcal{N}^{\ast}}\lambda\varphi\partial_{t}\theta s\mathcal{O}_{\lambda}(1)\,|z|^{2}+\int_{\mathcal{M}^{\ast}\times\mathcal{N}^{\ast}}s\mathcal{O}_{\lambda}(1)\,|D_{h}z|^{2}+Y_{31}$, and the proof is complete.
\end{proof}

\subsection{Estimate of $I_{32}$}
\begin{lemma}\label{I32}
Provided $\Delta t \tau (T^3 \delta^2)^{-1}\leq 1$ and $\tau h(\delta T^{2})^{-1}\leq 1$, we have
\begin{equation*}
I_{32}\geq-\int_{\mathcal{M}\times\mathcal{N}^{\ast}}Ts^{2}\theta\mathcal{O}_{\lambda}(1)|z|^{2}:=-X_{32}.
\end{equation*}
\end{lemma}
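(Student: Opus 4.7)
The plan is to unfold the definition of $A_3z$ and $B_2z$, exploit the fact that the spatial factors $\varphi$ and $\partial_x^2\phi$ commute with the time translation operator $\taut^{-}$, and then estimate the resulting integrand by means of the elementary bound $|\theta'|\leq CT\theta^2$. Since $A_3z=-\tau\varphi\taut^{-}(\theta'z)$ and $B_2z=-2\taut^{-}(s\partial_x^2\phi\,z)$, I would write
\begin{equation*}
I_{32}=2\tau\int_{\mathcal{M}\times\mathcal{N}}\varphi(x)\,\partial_x^2\phi(x)\,\taut^{-}(\theta'z)(t)\,\taut^{-}(sz)(t),
\end{equation*}
pulling $\varphi$ and $\partial_x^2\phi$ outside of $\taut^{-}$ (they depend only on $x$).

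Next I would use the pointwise multiplicativity $\taut^{-}(fg)=\taut^{-}(f)\taut^{-}(g)$ to combine the two translated factors into a single one, namely $\taut^{-}(\theta'z)\,\taut^{-}(sz)=\taut^{-}(\theta's|z|^2)$. A standard shift of the time variable via identity \eqref{eq:tau:menos} (with the constant function $1$ playing the role of $f$) then yields
\begin{equation*}
I_{32}=2\tau^2\int_{\mathcal{M}\times\mathcal{N}^{\ast}}\varphi\,\partial_x^2\phi\,\theta\,\theta'\,|z|^2,
\end{equation*}
after writing $s=\tau\theta$ on $\mathcal{N}^{\ast}$.

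Finally, the elementary estimate $|\theta'|\leq CT\theta^2$ (which follows directly from the explicit form \eqref{funcion-peso-3} of $\theta$; see also Lemma \ref{lem:discrete:theta}), together with $\varphi=\mathcal{O}_\lambda(1)$ and $\partial_x^2\phi=\lambda^2(\partial_x\psi)^2\phi+\lambda(\partial_x^2\psi)\phi=\mathcal{O}_\lambda(1)$ for fixed $\lambda$, gives
\begin{equation*}
|\tau^2\theta\,\theta'|\leq CT\tau^2\theta^3=CT\,s^2\theta\,\mathcal{O}_\lambda(1),
\end{equation*}
so that
\begin{equation*}
I_{32}\geq -\int_{\mathcal{M}\times\mathcal{N}^{\ast}}Ts^2\theta\,\mathcal{O}_\lambda(1)\,|z|^2,
\end{equation*}
which is the claimed bound. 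There is no real obstacle here: the whole estimate is a direct computation, and the only point requiring attention is the careful handling of the time shift from $\mathcal{N}$ to $\mathcal{N}^{\ast}$ inside the $\taut^{-}$ operator.
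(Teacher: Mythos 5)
Your argument is correct and coincides with the paper's own proof: both reduce $I_{32}$ to $\int_{\mathcal{M}\times\mathcal{N}^{\ast}}\alpha_{32}|z|^{2}$ with $\alpha_{32}=2\tau s\varphi\,\theta'\,\partial_x^2\phi$ via the time shift \eqref{eq:tau:menos}, and then conclude from $|\theta'|\leq CT\theta^2$ together with the boundedness of $\varphi$ and $\partial_x^2\phi$. The extra detail you give on pulling the $x$-dependent factors through $\taut^{-}$ and on the pointwise multiplicativity of the shift is harmless and correct.
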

\begin{proof}
Let us set $\alpha_{32}:=2s\partial_{x}^{2}\phi\lambda\varphi\theta'$. We have that shifting the discrete time variable $I_{32}$ is given by $\displaystyle I_{32}=\int_{\mathcal{M}\times\mathcal{N}^{\ast}}\alpha_{32}|z|^{2}$. Then, it follows that $I_{32}\geq -\int_{\mathcal{M}\times\mathcal{N}^{\ast}}Ts^{2}\theta\mathcal{O}_{\lambda}(1)|z|^{2}$,
since $|\theta'|\leq CT\theta^{2}$, $\left\| \partial_{x}^{2}\phi\right\|_{L^{\infty}_{h}(\mathcal{M}\times\mathcal{N}^{\ast})}\leq \mathcal{O}_{\lambda}(1)$ and $\left\| \varphi\right\|_{C(\Omega)}=\mathcal{O}_{\lambda}(1)$ which complete the proof.
\end{proof}

\subsection{Estimate of $I_{33}$}
In this estimate we face a similar situation as in the proof of the estimate for $I_{23}$. The equation (181) in the proof of Lemma 2.7 from \cite{GC-HS-2021} needs some additional steps since after the application of the integration by parts (155) from that work the term with $|z|^{2}$ should be shifted to the right in time and not to the left. Thus, some new terms will appear although, as in the proof of the estimate for $I_{23}$, it will not change the principal term in the final estimate.
\begin{lemma}\label{I33}
Provided $\Delta t \tau (T^3 \delta^2)^{-1}\leq 1$ and $\tau h(\delta T^{2})^{-1}\leq 1$, we have
\begin{equation*}
    \begin{split}
        I_{33}\geq &-\int_{\mathcal{M}\times\mathcal{N}^{\ast}}\mu_{23}|z|^{2}-\int_{\mathcal{M}\times\mathcal{N}}(\Delta t)^{2}\taut^{-}(\mu_{23})|D_{t}z|^{2}-\int_{\mathcal{M}\times\mathcal{N}}\taut^{-}(\gamma_{23})|D_{t}z|^{2}=:-X_{33},
    \end{split}
\end{equation*}
with $\mu_{23}:=(\tau T^{2}\theta^{3}+\frac{\tau \Delta}{\delta^{4}T^{5}})\mathcal{O}_{\lambda}(1)$ and $\gamma_{33}:=\Delta t \tau T\theta^{2}\mathcal{O}_{\lambda}(1)$.
\end{lemma}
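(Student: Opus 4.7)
The plan is to exploit that $\theta'$ depends only on time, so $\taut^-(\theta' z) = \taut^-\theta'\,\taut^- z$, and therefore
\begin{equation*}
    I_{33} = -\tau\int_{\mathcal{M}\times\mathcal{N}} \varphi\, \taut^-\theta' \, \taut^-z \, D_t z.
\end{equation*}
I would then invoke the discrete product identity \eqref{eq:identity:square}, namely $\taut^- z \cdot D_t z = \tfrac{1}{2} D_t(|z|^2) - \tfrac{\Delta t}{2} |D_t z|^2$, to split $I_{33}$ into a divergence-in-time piece and a $|D_t z|^2$ piece:
\begin{equation*}
    I_{33} = -\tfrac{\tau}{2}\int_{\mathcal{M}\times\mathcal{N}} \varphi\, \taut^-\theta'\, D_t(|z|^2) + \tfrac{\tau\Delta t}{2}\int_{\mathcal{M}\times\mathcal{N}} \varphi\, \taut^-\theta'\, |D_tz|^2.
\end{equation*}

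The second piece yields the last term in the claim: using $\|\varphi\|_\infty = \mathcal{O}_\lambda(1)$ together with the elementary bound $|\theta'| \le CT\theta^2$, the coefficient in front of $|D_t z|^2$ is controlled in magnitude by $\tfrac{1}{2}\taut^-(\gamma_{33})$ with $\gamma_{33}=\Delta t\tau T\theta^2\mathcal{O}_\lambda(1)$, producing $-\int_{\mathcal{M}\times\mathcal{N}}\taut^-(\gamma_{33})|D_t z|^2$. For the first piece I would apply the time integration by parts \eqref{eq:tau:mas}, transferring $D_t$ onto the coefficient $\varphi\taut^-\theta'$:
\begin{equation*}
    -\tfrac{\tau}{2}\int_{\mathcal{M}\times\mathcal{N}}\varphi\,\taut^-\theta'\,D_t(|z|^2) = \tfrac{\tau}{2}\int_{\mathcal{M}\times\mathcal{N}^*}|z|^2\, D_t(\varphi\,\taut^-\theta') - \tfrac{\tau}{2}\int_{\mathcal{M}\times\partial\mathcal{N}} \varphi\,\taut^-\theta'\,\taut^+(|z|^2)\,n.
\end{equation*}
The interior integrand is estimated using Lemma \ref{lem:discrete:theta}, which gives $|D_t\theta'|\le CT^2\taut^-(\theta^3) + C\Delta t/(\delta^4 T^5)$; combined with $|\varphi|=\mathcal{O}_\lambda(1)$ this produces the coefficient $\mu_{33} = (\tau T^2\theta^3 + \tau\Delta t/(\delta^4 T^5))\mathcal{O}_\lambda(1)$, yielding the first claimed term $-\int_{\mathcal{M}\times\mathcal{N}^*}\mu_{33}|z|^2$.

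The delicate step is the discrete time-boundary contribution on $\partial\mathcal{N}=\{0,T\}$. I would handle it via the one-step expansion $\taut^+ z = \taut^- z + \Delta t\,D_t z$, giving $|\taut^+ z|^2 \le 2|\taut^- z|^2 + 2(\Delta t)^2 |D_t z|^2$. The first contribution, after a single time-step shift, is reabsorbed into the $\mathcal{N}^*$ integral of $\mu_{33}|z|^2$ (using that $|\taut^-\theta'|\le CT\theta^2$ is already dominated by the relevant weight under the hypothesis $\tau\Delta t/(\delta^3 T^3)\le 1$), while the second contribution is precisely the middle term $\int_{\mathcal{M}\times\mathcal{N}}(\Delta t)^2\,\taut^-(\mu_{33})|D_t z|^2$ of the claim. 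The main obstacle is making this boundary-to-interior accounting rigorous under the smallness condition $\tau^2\Delta t/(\delta^4 T^6)\le\epsilon_1(\lambda)$ so that all constants match up without spoiling the overall lower bound; the remaining manipulations are routine applications of the estimates collected in Section \ref{estimaciones}, with particular attention to the fact that $I_{33}$ is a genuinely lower-order piece (because $|\theta'|\le CT\theta^2$ rather than the dominant $\theta^3$ rate) and therefore the constants in $\mu_{33}$ and $\gamma_{33}$ must be tracked carefully rather than absorbed into the leading Carleman terms.
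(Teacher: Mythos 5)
Your opening steps coincide with the paper's: the splitting of $I_{33}$ via \eqref{eq:identity:square}, the treatment of the $\tfrac{\tau\Delta t}{2}\int\varphi\taut^-(\theta')|D_tz|^2$ piece through $|\theta'|\le CT\theta^2$, and the use of Lemma \ref{lem:discrete:theta} for the interior term after the time integration by parts. The gap is in your handling of the time-boundary term on $\mathcal{M}\times\partial\mathcal{N}$. The paper does not absorb this term at all: it observes that $\theta'=(2t-T)\theta^2$, so $\taut^{+}(\theta')\,n>0$ at both endpoints of $\partial\mathcal{N}=\{0,T\}$, and since $\varphi<0$ on $\mathcal{M}$ the boundary contribution $-\tfrac{\tau}{2}\int_{\mathcal{M}\times\partial\mathcal{N}}\varphi\,\taut^{+}(|z|^{2})\taut^{+}(\theta')\,n$ has a favorable sign and is simply dropped in the lower bound. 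This sign observation is the essential idea of the lemma and is missing from your argument.

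Your proposed substitute — expanding $|\taut^+z|^2\le 2|\taut^-z|^2+2(\Delta t)^2|D_tz|^2$ at the endpoints and ``reabsorbing'' the result into the $\mathcal{N}^*$- and $\mathcal{N}$-integrals — cannot work as stated. The boundary functional $\int_{\partial\mathcal{N}}F\,n=F(T)-F(0)$ carries no $\Delta t$ weight, whereas $\int_{\mathcal{N}}$ and $\int_{\mathcal{N}^*}$ are $\Delta t$-weighted sums; a pointwise-in-time quantity cannot be dominated by such a sum without losing a factor $1/\Delta t$, and the smallness hypotheses on $\tau\Delta t$ do not repair this. Relatedly, your attribution of the middle term $\int_{\mathcal{M}\times\mathcal{N}}(\Delta t)^2\taut^-(\mu_{23})|D_tz|^2$ to the boundary is not how it arises: in the paper it comes from the \emph{interior} integral $\tfrac{\tau}{2}\int\varphi\,D_t(\theta')\,\taut^+(|z|^2)$, where the presence of $\taut^+(|z|^2)$ (a consequence of using \eqref{eq:integral:time:primal}) forces the bound $|\taut^+z|^2\le C|\taut^-z|^2+C(\Delta t)^2|D_tz|^2$ before the $|\taut^-z|^2$ part can be shifted to $\mathcal{N}^*$. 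Once you replace your boundary absorption by the sign argument and route the $(\Delta t)^2|D_tz|^2$ term through the interior integral, the proof closes.
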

\begin{proof}
We have to estimate
$\displaystyle I_{33}:=-\int_{\mathcal{M}\times\mathcal{N}}\tau\varphi\taut^{-}(\theta'\,z)D_{t}z$. Note that by using the identity \eqref{eq:identity:square}, $I_{33}$ can be rewritten as $\displaystyle I_{33}=-\frac{1}{2}\int_{\mathcal{M}\times\mathcal{N}}\tau\varphi\taut^{-}(\theta')\,D_{t}(|z|^{2})+\frac{\Delta t}{2}\int_{\mathcal{M}\times\mathcal{N}}\tau\varphi\taut^{-}(\theta')|D_{t}z|^{2}$. Now, using \eqref{eq:integral:time:primal} on the first integral above we have that
\begin{equation*}
\begin{split}
    I_{33}=&\frac{1}{2}\int_{\mathcal{M}\times\mathcal{N}}\tau\varphi\,D_{t}(\theta')\,\taut^{+}(|z|^{2})-\int_{\mathcal{M}\times\partial \mathcal{N}}\tau\varphi\taut^{+}(|z|^{2})\taut^{+}(\theta')n+\frac{\Delta t}{2}\int_{\mathcal{M}\times\mathcal{N}^{\ast}}\tau\varphi\taut^{-}(\theta')|D_{t}z|^{2}.
    \end{split}
\end{equation*}
We note that $\taut^{+}(\theta')\,n>0$ on $\mathcal{M}\times\partial\mathcal{N}$ since $\theta'=(2t-T)\theta^{2}$. Moreover, thanks to $\varphi<0$ in $\mathcal{M}$, it follows that
\begin{equation}
\int_{\mathcal{M}\times\partial\mathcal{N}}\lambda\varphi\taut^{+}(|z|^{2})\taut^{+}(\theta')\,n<0.
\end{equation}
Thus, we can drop the boundary terms and $I_{33}$ can be estimated as
\begin{equation*}
    I_{33}\geq\frac{1}{2}\int_{\mathcal{M}\times\mathcal{N}}\tau\varphi\,D_{t}(\theta')\,\taut^{+}(|z|^{2})+\frac{\Delta t}{2}\int_{\mathcal{M}\times\mathcal{N}^{\ast}}\lambda\varphi\taut^{-}(\theta')|D_{t}z|^{2}.
\end{equation*}    
Now, let us focus on the integral with the term $|\taut^{+}z|^{2}$. Using that $|\taut^{+}z|^{2}\leq C|\taut^{-}z|^{2}+C(\Delta t)^{2}|D_{t}z|^{2}$ and Lemma  \ref{lem:discrete:theta} we obtain
\begin{equation*}
    \begin{split}
        I_{33}\geq &-\int_{\mathcal{M}\times\mathcal{N}}\taut^{-}(\mu_{23})|\taut^{-}z|^{2}-\int_{\mathcal{M}\times\mathcal{N}}(\Delta t)^{2}\taut^{-}(\mu_{23})|D_{t}z|^{2}-\int_{\mathcal{M}\times\mathcal{N}}\taut^{-}(\gamma_{23})|D_{t}z|^{2},
    \end{split}
\end{equation*}
with $\mu_{23}:=(\tau T^{2}\theta^{3}+\frac{\tau \Delta}{\delta^{4}T^{5}})\mathcal{O}_{\lambda}(1)$ and $\gamma_{33}:=\tau\Delta t  T\theta^{2}\mathcal{O}_{\lambda}(1)$. Using \eqref{eq:tau:menos} we can shift the integral with the term $|\taut^{-}z|^{2}$ which completes the proof.
\end{proof}

\section{Boundary estimates}\label{apen:B}
In this section we prove the estimates for the left boundary terms. Since the proof for the right boundary terms is analogous we omit its proofs. 
\subsection{Estimate of $J_{11}$}
\begin{lemma}\label{lem:J11}
For $\tau\Delta t  (T^3 \delta^2)^{-1}\leq 1$ and $\tau h(\delta T^{2})^{-1}\leq 1$, we have
\begin{equation*}
    J_{11}\geq-\int_{\{0\}\times\mathcal{N}^{\ast}}\tilde{\alpha}_{11}^{(1)} |z|^{2} -(\Delta t)^{2}\int_{\{0\}\times\mathcal{N}}\taut^{-}(\tilde{\alpha}_{11}^{(1)})|D_{t}z|^{2}-\Delta t \int_{\mathcal{N}\times\{0\}}\taut^{-}(\tilde{\alpha}_{11}^{(2)})|D_{t}z|^{2}.
\end{equation*}
with $\tilde{\alpha}^{(1)}_{11}:=(\tau^{2} T^{2}\theta^{3}+\frac{\tau \Delta t}{\delta^{4}T^{5}})\mathcal{O}_{\lambda}(1)$ and $\gamma^{(2)}_{11}:=\Delta t\tau T\theta^{2}\mathcal{O}_{\lambda}(1)$.
\end{lemma}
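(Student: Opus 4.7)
\textbf{Plan for the proof of Lemma \ref{lem:J11}.} The starting observation is that the integrand in $J_{11}$ is structurally identical to that of $I_{33}$ (Lemma \ref{I33}); the only difference is that the spatial variable is frozen at the single boundary point $x=0$ instead of being integrated over $\mathcal{M}$. Indeed, by the definitions of $C_1$ and $D_1$ (from the block following \eqref{eq:CDR}),
\[
J_{11}=\int_{\{0\}\times\mathcal{N}}C_1z\,D_1z=-\int_{\{0\}\times\mathcal{N}}\tau\varphi\,\taut^{-}(\theta'z)\,D_{t}z,
\]
which is precisely the expression treated in Lemma \ref{I33}. The plan is therefore to run the proof of $I_{33}$ verbatim, with $\mathcal{M}$ replaced by $\{0\}$, and then to read off the stated quantities $\tilde{\alpha}_{11}^{(1)}$ and $\tilde{\alpha}_{11}^{(2)}$ from the computation.

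Concretely, I will first factor $\taut^{-}(\theta'z)=\taut^{-}(\theta')\,\taut^{-}(z)$ and apply the identity $\taut^{-}(z)D_{t}z=\tfrac{1}{2}D_{t}(z^{2})-\tfrac{\Delta t}{2}|D_{t}z|^{2}$ from \eqref{eq:identity:square} to split $J_{11}$ into
\[
J_{11}=-\tfrac{1}{2}\int_{\{0\}\times\mathcal{N}}\tau\varphi\,\taut^{-}(\theta')\,D_{t}(z^{2})+\tfrac{\Delta t}{2}\int_{\{0\}\times\mathcal{N}}\tau\varphi\,\taut^{-}(\theta')\,|D_{t}z|^{2}.
\]
Next, I will move the discrete time derivative off of $z^{2}$ using the time integration-by-parts formula \eqref{eq:integral:time:primal}, which produces a volume contribution with $D_{t}(\theta')$ weighted against $\taut^{+}(z^{2})$ together with a time boundary term on $\partial\mathcal{N}=\{0,T\}$. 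The crucial sign observation is the same as in the proof of $I_{33}$: since $\varphi(0)<0$, and since $\theta'=(2t-T)\theta^{2}$ makes $\taut^{+}(\theta')\,n\geq 0$ at both endpoints of $\partial\mathcal{N}$, the time boundary term is non-positive after multiplication by $-\tfrac{1}{2}\tau\varphi$ is reversed, so it can be discarded without affecting the claimed lower bound.

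What remains is then bounded by Lemma \ref{lem:discrete:theta}, which yields $|D_{t}(\theta')|\leq CT^{2}\taut^{-}(\theta^{3})+C\Delta t/(\delta^{4}T^{5})$, and by $|\theta'|\leq CT\theta^{2}$; multiplying by $|\tau\varphi|=\tau\,\mathcal{O}_{\lambda}(1)$ produces exactly the weights $\tilde{\alpha}_{11}^{(1)}$ and $\tilde{\alpha}_{11}^{(2)}$ appearing in the statement. Finally I will control $\taut^{+}(z^{2})=|\taut^{+}z|^{2}$ by $C|\taut^{-}z|^{2}+C(\Delta t)^{2}|D_{t}z|^{2}$ and use \eqref{eq:tau:menos} to shift $\int_{\mathcal{N}}f\,|\taut^{-}z|^{2}=\int_{\mathcal{N}^{\ast}}\taut^{+}(f)\,|z|^{2}$, which accounts for the first term of the stated estimate; the $(\Delta t)^{2}$-term arising from this bound becomes the middle term, and the $\tfrac{\Delta t}{2}\int \tau\varphi\taut^{-}(\theta')|D_{t}z|^{2}$ piece gives the last term. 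I do not anticipate a genuine obstacle: the proof is a direct boundary analogue of Lemma \ref{I33}, and the only substantive step is the sign check at $\partial\mathcal{N}$, which works because $\varphi(0)<0$ and because $\theta'$ is negative at the left endpoint and positive at the right endpoint, matching the outward normal $n$ at both ends.
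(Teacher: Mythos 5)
Your proposal is correct and follows essentially the same route as the paper: split $J_{11}$ via the identity $\taut^{-}(z)D_{t}z=\tfrac12 D_{t}(z^{2})-\tfrac{\Delta t}{2}|D_{t}z|^{2}$, integrate by parts in time with \eqref{eq:integral:time:primal}, drop the signed boundary term on $\partial\mathcal{N}$ using $\varphi(0)<0$ and $\taut^{+}(\theta')\,n\geq 0$, and conclude with Lemma \ref{lem:discrete:theta} together with $|\taut^{+}z|^{2}\leq C|\taut^{-}z|^{2}+C(\Delta t)^{2}|D_{t}z|^{2}$. Your observation that this is the boundary analogue of Lemma \ref{I33} is exactly how the paper proceeds, so no further comment is needed.
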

\begin{proof}
Let us recall that $J_{11}$ is given by $\displaystyle J_{11}:=-\tau\int_{\{0\}\times\mathcal{N}}\varphi\taut^{-}(\theta'z)D_{t}z$. 
By using the identity \eqref{eq:identity:square}, we have that $D_{t}z\,\tau^{+}z=\frac{1}{2}D_{t}(|z|^{2})+\frac{1}{2}\Delta t |D_{t}z|^{2}$. Then, the integral $J_{11}$ can be rewritten as 
\begin{equation*}
    J_{11}=-\frac{\tau}{2}\int_{\{0\}\times\mathcal{N}}\varphi\taut^{-}(\theta')D_{t}(|z|^{2})+\frac{\tau\Delta t}{2}\int_{\{0\}\times\mathcal{N}}\varphi\taut^{-}(\theta')|D_{t}z|^{2}.
\end{equation*}
Applying the discrete integral by parts \eqref{eq:integral:time:primal} on the first integral above, it follows that
\begin{equation*}
    J_{11}=\frac{\tau}{2}\int_{\{0\}\times\mathcal{N}^{\ast}}\varphi D_{t}(\theta')|\taut^{+}z|^{2}-\frac{\tau}{2}\int_{\{0\}\times\partial \mathcal{N}}\varphi \taut^{+}(\theta'|z|^{2})n+\frac{\tau\Delta t}{2}\int_{\{0\}\times\mathcal{N}}\varphi\taut^{-}(\theta')|D_{t}z|^{2}.
\end{equation*}
We note that the time boundary terms, the second integral above, can be drooped since it is positive. Thus, using the estimation from Lemma \ref{lem:discrete:theta} and the inequality $|\taut^{+}z|^{2}\leq C|\taut^{-}z|^{2}+C(\Delta t)^{2}|D_{t}z|^{2}$, we obtain the result.
\end{proof}
\subsection{Estimate of $J_{12}$}
\begin{lemma}\label{lem:J12}
For $\Delta t \tau (T^3 \delta^2)^{-1}\leq 1$ and $\tau h(\delta T^{2})^{-1}\leq 1$, we have
\begin{equation*}
    J_{12}\geq - \int_{\{0\}\times\mathcal{N}}\left(s^{-1}+c_ {\lambda}s^{-1}(sh)^{2}\right)|D_{t}z|^{2}-\int_{\{h/2\}\times\mathcal{N}^{\ast}}\left(s+c_{\lambda}s(sh)^{2}\right)|D_{h}z|^{2}.
\end{equation*}
\end{lemma}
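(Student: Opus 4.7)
The plan is to bound $J_{12}$ directly by a Young-type inequality, turning the mixed product $D_t z \cdot \taut_+^-(D_h z)$ into a sum of squared terms of the type appearing on the right-hand side, and then converting the integral from $\{0\}\times\mathcal{N}$ to $\{h/2\}\times\mathcal{N}^{\ast}$ via the shift identities of Section \ref{sec:time:discretization} and Proposition \ref{properties:time:discret:ops}.

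First, I would expand
\begin{equation*}
J_{12}=\int_{\{0\}\times\mathcal{N}} D_{t}z\cdot\taut^{-}(r)\,\taut_{+}^{-}(A_{h}\rho)\,\taut_{+}^{-}(D_{h}z),
\end{equation*}
using the fact that $\taut^{-}$ (time) and $\taut_{+}$ (space) are pointwise shifts and therefore commute with pointwise multiplication. The coefficient $\taut^{-}(r)\taut_{+}^{-}(A_{h}\rho)$ is a product of $r$ and $A_h\rho$ evaluated at points shifted by at most $h/2$ in space, so the discrete-calculus estimates of Section \ref{estimaciones} (the same mechanism behind Proposition \ref{pro:space:estimate} and Lemma \ref{3.7}, applied with $n=0$, $\alpha=0$) give
\begin{equation*}
\bigl|\taut^{-}(r)\,\taut_{+}^{-}(A_{h}\rho)\bigr|^{2}\leq 1+c_{\lambda}(sh)^{2},
\end{equation*}
under the hypothesis $\tau h(\delta T^{2})^{-1}\leq 1$.

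Next, I apply Young's inequality in the form $|ab|\leq \tfrac{1}{2}s^{-1}a^{2}+\tfrac{1}{2}s\,b^{2}$ with $a=D_{t}z$ and $b=\taut^{-}(r)\taut_{+}^{-}(A_{h}\rho)\,\taut_{+}^{-}(D_{h}z)$. Combining with the pointwise bound above gives
\begin{equation*}
|J_{12}|\leq \frac{1}{2}\int_{\{0\}\times\mathcal{N}} s^{-1}|D_{t}z|^{2} + \frac{1}{2}\int_{\{0\}\times\mathcal{N}} s\bigl(1+c_{\lambda}(sh)^{2}\bigr)\bigl|\taut_{+}^{-}(D_{h}z)\bigr|^{2}.
\end{equation*}
For the second integral, using the combined time/space shift identity (an iteration of \eqref{eq:tau:menos} for $\taut^{-}$ together with the spatial shift $\taut_{+}$), $\int_{\{0\}\times\mathcal{N}}\taut_{+}^{-}(f)=\int_{\{h/2\}\times\mathcal{N}^{\ast}} f$, moves the integration to $\{h/2\}\times\mathcal{N}^{\ast}$; the weight $s\bigl(1+c_{\lambda}(sh)^{2}\bigr)$ becomes $\taut^{+}\!\bigl(s\bigl(1+c_{\lambda}(sh)^{2}\bigr)\bigr)$, which, by Lemma \ref{lem:discrete:theta} and the standing conditions $\Delta t\tau(T^{3}\delta^{2})^{-1}\leq 1$, is absorbed in the same form $s+c_{\lambda}s(sh)^{2}$ (up to a harmless constant factor that we include in $c_{\lambda}$).

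Putting the two estimates together and using $J_{12}\geq -|J_{12}|$ yields the claimed bound, with the $c_{\lambda}s^{-1}(sh)^{2}$ summand on $|D_{t}z|^{2}$ included to accommodate any similar shift-induced correction appearing if one symmetrically distributes the $(1+c_{\lambda}(sh)^{2})$ factor between the two Young terms. The main (minor) obstacle here is purely bookkeeping: keeping track of which meshes $r$, $\rho$, $A_{h}\rho$, and $D_{h}z$ live on, which shift ($\taut^{\pm}$ or $\taut_{\pm}$) is applied where, and verifying that after the final change of variable the weights at $\{h/2\}\times\mathcal{N}^{\ast}$ coincide (modulo the allowed $(sh)^{2}$ perturbation) with those at $\{0\}\times\mathcal{N}$. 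No additional integration-by-parts or delicate cancellation is needed.
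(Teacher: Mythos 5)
Your proposal is correct and follows essentially the same route as the paper: identify the coefficient $\taut_{+}^{-}(rA_{h}\rho)$, estimate it as $1+c_{\lambda}(sh)^{2}$ via Proposition \ref{pro:space:estimate}, apply Young's inequality with weights $s^{\mp1}$, and shift the $|D_{h}z|^{2}$ integral to $\{h/2\}\times\mathcal{N}^{\ast}$. The only (immaterial) difference is where you place the $(1+c_{\lambda}(sh)^{2})$ factor between the two Young terms; the paper distributes it over both, which is exactly the symmetric variant you mention.
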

\begin{proof}
Setting $\tilde{\alpha}_{12}:=\taut_{+}^{-}(rA_{h}\rho)$, $J_{12}$ is defined by $\displaystyle J_{12}:=\int_{\{0\}\times\mathcal{N}}\tilde{\alpha}_{12}\,D_{t}z\taut^{-}_{+}(D_{h}z)$. Thanks to Proposition \ref{pro:space:estimate} we have $\tilde{\alpha}_{12}=1+c_{\lambda}\taut^{-}((sh)^{2})$, then using Young's inequality for $J_{12}$ we obtain
\begin{equation*}
    |J_{12}|\leq \int_{\{0\}\times\mathcal{N}}\left(s^{-1}+c_ {\lambda}s^{-1}(sh)^{2}\right)|D_{t}z|^{2}+\int_{\{h/2\}\times\mathcal{N}^{\ast}}\left(s+c_{\lambda}s(sh)^{2}\right)|D_{h}z|^{2}.
\end{equation*}
\end{proof}
\subsection{Estimate of $J_{21}$}
\begin{lemma}\label{lem:J21}
For $\tau\Delta t  (T^3 \delta^2)^{-1}\leq 1$ and $\tau h(\delta T^{2})^{-1}\leq 1$, we have
\begin{equation*}
    J_{21}\geq  -\int_{\{0\}\times\mathcal{N}^{\ast}}\tilde{\alpha}_{21}|A_{h}z|^{2}-\int_{\{0\}\times\mathcal{N}^{\ast}}\tilde{\alpha}_{21}|z|^{2},
\end{equation*}
where $\tilde{\alpha}_{21}:=\tau^{2}\theta^{3}\mathcal{O}_{\lambda}(1)$.
\end{lemma}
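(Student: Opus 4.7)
Recalling the definitions $C_2 z = \taut^-(r)\,\taut_+^-(D_h\rho \, A_h z)$ and $D_1 z = -\tau\varphi\,\taut^-(\theta' z)$, the plan is to write out the product
\[
J_{21}=-\tau\int_{\{0\}\times\mathcal{N}}\varphi\, \taut^-(r)\,\taut_+^-(D_h\rho)\,\taut_+^-(A_h z)\,\taut^-(\theta')\,\taut^-(z),
\]
so that all the geometric information is packed into a scalar coefficient
\[
\tilde a_{21}(t)\;:=\;-\tau\varphi(0)\,\taut^-(r)(0,t)\,\taut_+^-(D_h\rho)(0,t)\,\taut^-(\theta')(t),
\]
and the dynamical information is the bilinear pairing $\taut_+^-(A_h z)\cdot\taut^-(z)$.

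Next I would apply the elementary Young inequality $|ab|\le \tfrac12 a^2+\tfrac12 b^2$ to this pairing, which yields
\[
|J_{21}|\le \tfrac12\!\int_{\{0\}\times\mathcal{N}}\!|\tilde a_{21}|\,\bigl(|\taut_+^-(A_h z)|^2+|\taut^-(z)|^2\bigr).
\]
Then I would shift the discrete time variable via the identity \eqref{eq:tau:menos} to convert the integration domain $\mathcal{N}$ into $\mathcal{N}^\ast$, giving
\[
|J_{21}|\le \tfrac12\!\int_{\{0\}\times\mathcal{N}^\ast}\!\!|\taut^+(\tilde a_{21})|\,\bigl(|A_h z|^2+|z|^2\bigr),
\]
which already has the structure of the claimed inequality.

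It remains to check that $|\tilde a_{21}|\le \tau^2\theta^3\mathcal{O}_\lambda(1)$, i.e.\ $|\taut^+(\tilde a_{21})|\le\tilde\alpha_{21}$. For this I would use three ingredients: (i) $|\varphi(0)|=\mathcal{O}_\lambda(1)$ from the very definition \eqref{funcion-peso-2}; (ii) $|\theta'(t)|\le CT\theta(t)^2$, a standard bound for the weight \eqref{funcion-peso-3} which is already used in the proof of Lemma \ref{I_{13}}; and (iii) the mismatched product $\taut^-(r)(0,t)\,\taut_+^-(D_h\rho)(0,t)$, which is of the form $r(0,\tilde t)\,(D_h\rho)(h/2,\tilde t)$ with $\tilde t=t-\Delta t/2$. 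Applying Proposition \ref{pro:space:estimate} (with $n=1$, $\alpha=0$, $m=0$) together with the $\sigma=1/2$ version of Lemma \ref{3.7}, this last quantity is $\mathcal{O}_\lambda(s)$ under the assumption $\tau h(\delta T^2)^{-1}\le 1$. Multiplying the three estimates yields $|\tilde a_{21}|\le \tau\cdot T\theta^2\cdot s\cdot\mathcal{O}_\lambda(1)=T\tau^2\theta^3\mathcal{O}_\lambda(1)$, and the factor $T$ is absorbed in the $\mathcal{O}_\lambda(1)$ constant exactly as was done in the proofs of Lemmas \ref{I_{13}} and \ref{lem:J11}.

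The main obstacle is bookkeeping rather than analysis: one has to be careful that the spatial argument of $r$ and of $D_h\rho$ differ by $h/2$ (because of the $\taut_+$ in $C_2 z$ evaluated at the boundary $x=0$), so one cannot invoke the clean pointwise identity $r\,D_h\rho = -s\lambda\partial_x\psi\phi+\mathcal{O}_\lambda(s(sh)^2)$ directly; instead one must use the slightly more general shifted estimate in Lemma \ref{3.7} / Proposition \ref{pro:space:estimate}. Apart from this, the argument is purely algebraic and parallel in spirit to the estimates for $J_{11}$ and $J_{12}$ established just above.
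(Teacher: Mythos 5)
Your proposal is correct and follows essentially the same route as the paper: pack all the weight factors into a single scalar coefficient (the paper calls it $\tilde\beta_{21}:=-\tau\varphi\theta' r\,\taut_{+}(D_{h}\rho)$), apply Young's inequality to the pairing of $A_h z$ with $z$, shift the time variable to $\mathcal{N}^\ast$ via \eqref{eq:tau:menos}, and bound the coefficient by $\tau^2\theta^3\mathcal{O}_\lambda(1)$ using $|\theta'|\leq CT\theta^2$ and $r\,\taut_+(D_h\rho)=\mathcal{O}_\lambda(s)$ from Proposition \ref{pro:space:estimate}. Your remark about the $h/2$ mismatch in the spatial arguments and the shifted estimate of Lemma \ref{3.7} is, if anything, more careful bookkeeping than the paper provides.
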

\begin{proof}
Setting $\tilde{\beta}_{21}:=-\tau\varphi\theta'r\taut_{+}(D_{h}\rho)$, we note that $J_{21}$ is given by $\displaystyle
    J_{21}:=\int_{\{0\}\times\mathcal{N}}\taut^{-}(\tilde{\beta}_{21}zA_{h}\taut_{+}z)$. Then, using Young's inequality we have
\begin{equation*}
    |J_{21}|\leq \int_{\{h/2\}\times\mathcal{N}^{\ast}}|\tilde{\beta}_{21}||A_{h}z|^{2}+\int_{\{0\}\times\mathcal{N}^{\ast}}|\tilde{\beta}_{21}||z|^{2},
\end{equation*}
where we have used a shift in the discrete time variable. Thus, applying Proposition \ref{properties:time:discret:ops} and Lemma \ref{lem:discrete:theta} we obtain for $J_{21}$
\begin{equation*}
    |J_{21}|\leq \int_{\{h/2\}\times\mathcal{N}^{\ast}}\gamma_{21}|A_{h}z|^{2}+\int_{\{0\}\times\mathcal{N}^{\ast}}\gamma_{21}|z|^{2},
\end{equation*}
with $\tilde{\alpha}_{21}:=\tau^{2}\theta^{3}\mathcal{O}_{\lambda}(1)$.
\end{proof}
\subsection{Estimate of $J_{22}$}
\begin{lemma}\label{lem:J22}
For $\tau\Delta t  (T^3 \delta^2)^{-1}\leq 1$ and $\tau h(\delta T^{2})^{-1}\leq 1$, we have
\begin{equation*}
    J_{22}\geq-\int_{\{h/2\}\times\mathcal{N}^{\ast}}s^{2}\mathcal{O}_{\lambda}(1)\,|A_{h}z|^{2}-\int_{\{h/2\}\times\mathcal{N}^{\ast}}|D_{h}z|^{2}.
\end{equation*}
\end{lemma}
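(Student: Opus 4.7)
\textbf{Plan for Lemma \ref{lem:J22}.}
The plan is to factor $C_2 z \cdot D_2 z$ into a weight coefficient and a bilinear form in $A_h z$ and $D_h z$, estimate the coefficient in sup-norm via the results of Section \ref{estimaciones}, and finish with Young's inequality plus a time shift. Concretely, I would start by expanding, using the definitions of $C_2$ and $D_2$ at $x=0$, as
\begin{equation*}
C_2 z\cdot D_2 z \;=\; \tilde{\alpha}_{22}\,\taut^{-}_{+}(A_{h} z)\,\taut^{-}_{+}(D_{h} z),\qquad
\tilde{\alpha}_{22}:=\bigl(\taut^{-}(r)\bigr)^{2}\,\taut^{-}_{+}\!\bigl(D_{h}\rho\cdot A_{h}\rho\bigr),
\end{equation*}
so that $J_{22}=\int_{\{0\}\times\mathcal{N}}\tilde{\alpha}_{22}\,\taut^{-}_{+}(A_{h}z)\,\taut^{-}_{+}(D_{h}z)$.

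Next I would show $\tilde{\alpha}_{22}=s\,\mathcal{O}_{\lambda}(1)$. Because $\taut^{-}(r)$ is sampled at $x=0$ whereas $\taut^{-}_{+}(D_{h}\rho\cdot A_{h}\rho)$ is sampled at $x=h/2$, I cannot apply Proposition \ref{pro:space:estimate} verbatim; instead I would invoke the shifted version of Lemma \ref{3.7} with $\sigma=1/2$, which gives $r(0,t)\partial_{x}^{\alpha}\rho(h/2,t)=\mathcal{O}_{\lambda}(s^{\alpha})$, together with Corollary \ref{coro:AD} applied to $\rho$ to replace $D_{h}\rho$ and $A_{h}\rho$ by $\partial_{x}\rho$ and $\rho$ up to $\mathcal{O}_{\lambda}((sh)^{2})$ remainders. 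Multiplying yields $r(0)^{2}D_{h}\rho(h/2)A_{h}\rho(h/2)=-s\lambda\partial_{x}\psi\,\phi+s\,\mathcal{O}_{\lambda}((sh)^{2})$, and the time shift $\taut^{-}$ is harmless since $s=\tau\theta$ depends only on $t$. Under the standing conditions $\tau h(\delta T^{2})^{-1}\leq 1$ and $\Delta t\tau(T^{3}\delta^{2})^{-1}\leq 1$, this absorbs the error into $s\,\mathcal{O}_{\lambda}(1)$.

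With the pointwise bound $|\tilde{\alpha}_{22}|\leq C_{\lambda}s$ in hand, I would apply the weighted Young inequality $|ab|\le \tfrac12 a^{2}+\tfrac12 b^{2}$ to the two factors $\tilde{\alpha}_{22}\taut^{-}_{+}(A_{h}z)$ and $\taut^{-}_{+}(D_{h}z)$, obtaining
\begin{equation*}
|J_{22}|\leq \int_{\{0\}\times\mathcal{N}}s^{2}\,\mathcal{O}_{\lambda}(1)\,|\taut^{-}_{+}(A_{h}z)|^{2}+\int_{\{0\}\times\mathcal{N}}|\taut^{-}_{+}(D_{h}z)|^{2}.
\end{equation*}
Finally, I would use the time-integration-by-parts identity \eqref{eq:tau:menos} (which is a simple change of index $\taut^{-}\mathcal{N}=\mathcal{N}^{\ast}$) combined with the observation that $\taut^{-}_{+}u(0,t)=u(h/2,t-\Delta t/2)$ to rewrite both right-hand side integrals as integrals over $\{h/2\}\times\mathcal{N}^{\ast}$ of $s^{2}\mathcal{O}_{\lambda}(1)|A_{h}z|^{2}$ and $|D_{h}z|^{2}$ respectively. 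Flipping the sign yields the claimed lower bound for $J_{22}$.

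The only real obstacle is the first step, namely the mixed spatial/temporal-shift estimate for $\tilde{\alpha}_{22}$; once this is justified by combining Lemma \ref{3.7} with a Taylor expansion of $D_{h}$ and $A_{h}$ (Corollary \ref{coro:AD}), the remainder of the argument is routine Young/shift manipulation identical in spirit to the proofs of $J_{12}$ and $J_{21}$.
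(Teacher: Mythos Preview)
Your proposal is correct and follows essentially the same route as the paper: factor the integrand as a weight $\tilde\beta_{22}=r^{2}\taut_{+}(D_{h}\rho\,A_{h}\rho)$ times $A_{h}z\cdot D_{h}z$, bound the weight by $s\,\mathcal{O}_{\lambda}(1)$, apply Young's inequality, and shift in time via \eqref{eq:tau:menos}. The only cosmetic differences are that the paper performs the time shift first and then invokes Proposition~\ref{pro:space:estimate} directly, whereas you estimate the weight before shifting and spell out the spatial mismatch ($r$ at $x=0$ versus $D_{h}\rho,A_{h}\rho$ at $x=h/2$) using the shifted form of Lemma~\ref{3.7}; this extra care is legitimate but not strictly needed, since the ratio $r(0,t)/r(h/2,t)=e^{\mathcal{O}_{\lambda}(sh)}$ is bounded under $\tau h(\delta T^{2})^{-1}\leq 1$, which is what the paper's terse citation of Proposition~\ref{pro:space:estimate} implicitly uses.
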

\begin{proof}
Let us define $\tilde{\beta}_{22}:=r^{2}\taut_{+}(D_{h}\rho A_{h}\rho)$. We have to estimate 
$\displaystyle J_{22}:=\int_{\{0\}\times\mathcal{N}}\tau^{-}(\tilde{\beta}_{22})\taut_{+}^{-} (A_{h}z D_{h}z)$. Firstly, thanks to Proposition \ref{properties:time:discret:ops}, we can shift $J_{22}$ with respect to the discrete time variable obtaining $\displaystyle J_{22}=\int_{\{0\}\times\mathcal{N}^{\ast}}\tilde{\beta}_{22}\,\tau_{+}(D_{h}zA_{h}z)$. Now, by virtue of Young's inequality and Proposition \ref{pro:space:estimate}, $J_{22}$ can be estimated as follows
\begin{equation*}
    |J_{22}|\leq\int_{\{h/2\}\times\mathcal{N}^{\ast}}s^{2}\mathcal{O}_{\lambda}(1)\,|A_{h}z|^{2}+\int_{\{h/2\}\times\mathcal{N}^{\ast}}|D_{h}z|^{2},
\end{equation*}
which completes the proof.
\end{proof}

\subsection{Adding the term with $|D_{t}z|^{2}$ on the boundary}
From the definition of $Ez$ we have that $D_{t}z=Ez+\taut^{-}(r)\taut^{-}_{-}(D_{h}\rho A_{h}z)$ on $\{1\}\times \mathcal{N}$, then taking $L^{2}(\mathcal{N})$-norm, and considering Proposition \ref{pro:space:estimate}, it follows that
\begin{equation*}
    \int_{\{1\}\times\mathcal{N}}|D_{t}z|^{2}\leq \int_{\{1 \}\times\mathcal{N}}|Ez|^{2}+C_{\lambda}\int_{\{1\}\times\mathcal{N}}\taut^{-}(s^{2})|\taut_{-}^{-}A_{h}z|^{2}.
\end{equation*}
Similarly, using the definition of $Cz$, for the left boundary we have that 
\begin{equation*}
    \int_{\{0\}\times\mathcal{N}}|D_{t}z|^{2}\leq \int_{\{0\}\times\mathcal{N}}|Cz|^{2}+C_{\lambda}\int_{\{0\}\times\mathcal{N}}\taut^{-}(s^{2})|\taut_{+}^{-}A_{h}z|^{2}.
\end{equation*}
Collecting the above inequalities, and shifting the discrete time variable on the integrals with the term $|A_{h}|$, we obtain
\begin{equation}
    \int_{\partial\mathcal{M}\times\mathcal{N}}|D_{t}z|^{2}\leq \int_{\{0\}\times\mathcal{N}}|Cz|^{2}+\int_{\{1\} \times\mathcal{N}}|Ez|^{2}+C_{\lambda}\int_{\partial\mathcal{M}\times\mathcal{N}^{\ast}}s^{2}t_{r}(|A_{h}z|^{2})
\end{equation}

\subsection{Proof of Lemma \ref{lem:return:variable}}
Let us recall that $\rho z=q$. Then, using Lemma \ref{lem:product:rule} we have that
\begin{equation*}
    D_{h}q=D_{h}\rho\,A_{h}z+A_{h}\rho\,D_{h}z.
\end{equation*}
To proof \eqref{eq:return:Dx}, we multiply by $rs^{1/2}$ the above expression, and taking the $L^{2}(\mathcal{M}^{\ast}\times\mathcal{N}^{\ast})$-norm to the result, to get
\begin{equation*}
    \int_{\mathcal{M}^{\ast}\times\mathcal{N}^{\ast}}r^{2}s|D_{h}q|^{2}\leq \int_{\mathcal{M}^{\ast}\times\mathcal{N}^{\ast}}sr^{2}|D_{h}\rho|^{2}|A_{h}z|^{2}+\int_{\mathcal{M}^{\ast}\times\mathcal{N}^{\ast}}sr^{2}|A_{h}\rho|^{2}|D_{h}z|^{2}.
\end{equation*}
Now, using Proposition \ref{pro:space:estimate} and that $|A_{h}z|^{2}\leq A_{h}(|z|^{2})$, it follows from the above inequality 
\begin{equation*}
        \int_{\mathcal{M}^{\ast}\times\mathcal{N}^{\ast}}r^{2}s|D_{h}q|^{2}\leq C_{\lambda}\left( \int_{\mathcal{M}^{\ast}\times\mathcal{N}^{\ast}}s^{3}A_{h}(|z|^{2})+\int_{\mathcal{M}^{\ast}\times\mathcal{N}^{\ast}}s|D_{h}z|^{2}\right).
\end{equation*}
Applying a discrete integration by parts with respect to the average operator on the first integral from the right-hand side above gives
\begin{equation*}
        \int_{\mathcal{M}^{\ast}\times\mathcal{N}^{\ast}}r^{2}s|D_{h}q|^{2}\leq C_{\lambda}\left( \int_{\mathcal{M}^{\ast}\times\mathcal{N}^{\ast}}s^{3}|z|^{2}+\int_{\partial \mathcal{M}\times\mathcal{N}^{\ast}}s^{2}sh|z|^{2}+\int_{\mathcal{M}^{\ast}\times\mathcal{N}^{\ast}}s|D_{h}z|^{2}\right).
\end{equation*}
\par To estimate the integral with the term $|A_{h}D_{h}q|^{2}$, to get \eqref{eq:return:AxDx}, we use Lemma \ref{lem:product:rule} to write
\begin{equation}\label{eq:Ax:Dx}
    A_{h}D_{h}(q)=A_{h}^{2}\rho\,A_{h}D_{h}z+A_{h}D_{h}\rho\, z+\frac{h^{2}}{4}\left( 2A_{h}D_{h}\rho\,D_{h}^{2}z+D_{h}^{2}\rho\,A_{h}D_{h}z\right).
\end{equation}
Following similar steps as before, we multiply by $rs^{1/2}$ equation \eqref{eq:Ax:Dx}, then we take the $L^{2}(\mathcal{M}\times\mathcal{N}^{\ast})$-norm to the resulting expression, and thanks to Proposition \ref{pro:space:estimate} and Young inequality, we obtain
\begin{equation*}
\begin{split}
    \int_{\mathcal{M}\times\mathcal{N}^{\ast}}sr^{2}|A_{h}D_{h}q|^{2}\leq&C_{\lambda} \left( \int_{\mathcal{M}\times\mathcal{N}^{\ast}}s^{3}|z|^{2}+\int_{\mathcal{M}\times\mathcal{N}^{\ast}}s|A_{h}D_{h}z|^{2}\right)\\
    &+C_{\lambda}\left( +\int_{\mathcal{M}\times\mathcal{N}^{\ast}}s^{-1}(sh)^{4}|D_{h}z|^{2}+\int_{\mathcal{M}\times\mathcal{N}^{\ast}}s(sh)^{4}|A_{h}D_{h}z|^{2}\right).
    \end{split}
\end{equation*}
For \eqref{eq:return:DxDx}, note that arguing as before we get
\begin{equation*}
\begin{split}
    \int_{\mathcal{M}\times\mathcal{N}^{\ast}}r^{2}s^{-1}|D_{h}^{2}q|^{2}\leq &C_{\lambda}\left( \int_{\mathcal{M}\times\mathcal{N}^{\ast}}s^{3}|z|^{2}+\int_{\mathcal{M}\times\mathcal{N}^{\ast}}s^{-1}|D_{h}z|^{2}+\int_{\mathcal{M}\times\mathcal{N}^{\ast}}s|A_{h}D_{h}z|^{2}\right)\\
    &+C_{\lambda}\frac{h^{4}}{\delta^{4}T^{8}}\left( \int_{\mathcal{M}\times\mathcal{N}^{\ast}}s^{-1}|D_{h}z|^{2}\right).
    \end{split}
\end{equation*}
To obtain \eqref{eq:return:Dt} we note that $-D_{t}q=\mathcal{P}q+D_{h}^{2}\taut^{-}q$, then using that $s^{-1}\leq 1$ for $\tau\geq 1$ large enough we get
\begin{equation*}
    \int_{\mathcal{M}\times\mathcal{N}}\taut^{-}(r^{2}s^{-1})|D_{t}q|^{2}\leq 2\int_{\mathcal{M}\times\mathcal{N}}\taut^{-} r^{2}|\mathcal{P}q|^{2}+2\int_{\mathcal{M}\times\mathcal{N}}\taut^{-}r^{2}s^{-1}|D_{h}^{2}\taut^{-}q|^{2}.
\end{equation*}
\par For the spatial boundary terms we proceed analogously, for $t\in\mathcal{N}^{\ast}$ we have 
\begin{equation*}
    D_{h}q(h/2,t)=D_{h}(\rho)A_{h}z(h/2,t)+A_{h}(\rho)D_{h}(z)(h/2,t),
\end{equation*}
due to Lemma \ref{lem:product:rule}. Hence, Young's inequality and Proposition \ref{pro:space:estimate} yield
\begin{equation*}
    r^{2}s|D_{h}q(h/2,t)|^{2}\leq C_{\lambda}s^{3}|A_{h}z|^{2}(h/2,t)+C_{\lambda}s|D_{h}z|^{2}(h/2,t).
\end{equation*}
Since we can obtain a similar estimate for the points $(1-h/2,t)$, with $t\in\mathcal{N}^{\ast}$, it follows that 
\begin{equation*}
    \int_{\partial\mathcal{M}\times\mathcal{N}^{\ast}}sr^{2}t_{r}(|D_{h}q|^{2})\leq C_{\lambda}\left(\int_{\partial\mathcal{M}\times\mathcal{N}^{\ast}}s^{3}t_{r}(|A_{h}z|^{2})+\int_{\partial\mathcal{M}\times\mathcal{N}^{\ast}}st_{r}(|D_{h}z|^{2})\right).
\end{equation*}
\par Now, we use Lemma \ref{lem:product:rule} to write $\displaystyle A_{h}(|q|^{2})=|A_{h}q|^{2}+\frac{h^{2}}{4}|D_{h}q|^{2}$. Then, since $q:=\rho z$ repeated application of Lemma \ref{lem:product:rule}, Young's inequality and Proposition \ref{pro:space:estimate} lead to
\begin{equation*}
    r^{2}s^{3}A_{h}(|q|^{2})\leq  C_{\lambda}s^{3}|A_{h}z|^{2}+C_{\lambda}s(sh)^{3}|D_{h}z|^{2}.
\end{equation*}
Thus, for $A_{h}(|q|^{2})$ on the spatial boundary we have
\begin{equation*}
    \int_{\partial \mathcal{M}\times\mathcal{N}^{\ast}}s^{3}r^{2}t_{r}(A_{h}|q|^{2})\leq C_{\lambda}\int_{\partial\mathcal{M}\times\mathcal{N}^{\ast}}s^{3}t_{r}(|A_{h}z|^{2})+C_{\lambda}\int_{\partial\mathcal{M}\times\mathcal{N}^{\ast}}st_{r}(|D_{h}z|^{2}),
\end{equation*}
this proves \eqref{eq:return:Ax}.\\
Finally, to prove \eqref{eq:return:boundary}, we apply the discrete time operator $D_{t}$ on $q:=\rho\,z$. In this case, using Lemma \ref{lem:time:estimate} it follows that $\taut^{-}r\,D_{t}q=\taut^{-}rD_{t}\rho\taut^{+}z+D_{t}z$. Then, Cauchy-Swartz inequality gives
\begin{equation*}
\begin{split}
    \int_{\partial\mathcal{M}\times\mathcal{N}}|\taut^{-}rD_{t}q|^{2}\leq& C_{\lambda} \tau^{2}\int_{\partial\mathcal{M}\times\mathcal{N}}|\theta'\varphi|^{2}|\taut^{+}z|^{2}+C_{\lambda}\Delta t\int_{\partial\mathcal{M}\times\mathcal{N}}\left( \frac{\tau}{\delta^{3}T^{4}}+\frac{\tau^{2}}{\delta^{4}T^{6}}\right)|\taut^{+}z|^{2}\\
    &+\int_{\partial\mathcal{M}\times\mathcal{N}}|D_{t}z|^{2}.
    \end{split}
    \end{equation*}


\end{document}